\renewcommand\normalsize{%
    \@setfontsize\normalsize{11.7}{14pt plus .3pt minus .3pt}%
    \abovedisplayskip 10\p@ \@plus4\p@ \@minus4\p@
    \abovedisplayshortskip 6\p@ \@plus2\p@
    \belowdisplayshortskip 6\p@ \@plus2\p@
    \belowdisplayskip \abovedisplayskip}
\renewcommand\small{%
    \@setfontsize\small{9.5}{12\p@ plus .2\p@ minus .2\p@}%
    \abovedisplayskip 8.5\p@ \@plus4\p@ \@minus1\p@
    \belowdisplayskip \abovedisplayskip
    \abovedisplayshortskip \abovedisplayskip
    \belowdisplayshortskip \abovedisplayskip}
\renewcommand\footnotesize{%
    \@setfontsize\footnotesize{8.5}{9.25\p@ plus .1pt minus .1pt}
    \abovedisplayskip 6\p@ \@plus4\p@ \@minus1\p@
    \belowdisplayskip \abovedisplayskip	
    \abovedisplayshortskip \abovedisplayskip
    \belowdisplayshortskip \abovedisplayskip}
\DeclarePairedDelimiter\abs{\lvert}{\rvert}%
\newcolumntype{C}[1]{>{\centering\let\newline\\\arraybackslash\hspace{0pt}}m{#1}}
\newtheorem{mainthm}{Theorem}
\newtheorem{maincor}{Corollary}
\newtheorem*{thm*}{Theorem}
\newtheorem*{prop*}{Proposition}
\newtheorem*{corB}{Corollary B}
\newtheorem{thm}{Theorem}[section]
\newtheorem{lemma}[thm]{Lemma}
\newtheorem{lema}[thm]{Lemma}
\newtheorem{prop}[thm]{Proposition}
\newtheorem{cor}[thm]{Corollary}
\newcommand{\bi}{\begin{itemize}}
\newcommand{\ei}{\end{itemize}}
\theoremstyle{definition}
\theoremstyle{remark}
\newtheorem{remark}[thm]{Remark}
\newtheorem{obs}[thm]{Remark}
\newcommand{\T}{\mathbb{T}}
\newcommand{\R}{\mathbb{R}}
\newcommand{\Z}{\mathbb{Z}}
\newcommand{\N}{\mathbb{N}}
\newcommand{\A}{\mathbb{A}}
\title[Chaos and Diffusion
on the annulus]
{Conditions implying Annular Chaos: Quantitative results and Computer-Assisted Proofs}
\author{M. J. Capi\'nski}
\address[Capi\'nski]{Faculty of Applied Mathematics, AGH University of Krak\'ow, Kraków, Poland}
\email{maciej.capinski@agh.edu.pl}
\author{M. Gr\"oger}
\address[Gr\"oger]{Faculty of Mathematics and Computer Science, Jagellonian University, Kraków, Poland}
\email{maik.groeger@im.uj.edu.pl}
\author{A. Passeggi}
\address[Passeggi]{CMAT, Facultad de Ciencias, Universidad de la Rep\'{u}blica,
Igu\'{a} 4225, 11400 Montevideo, Uruguay} \email{apasseggi@cmat.edu.uy}
\subjclass[2000]{37E30, 37B40}
\author{F.A. Tal}
\address[Tal]{Instituto de Matem\' atica e Estat\' istica da Universidade de S\~ao Paulo,
R. do Mat\~ ao, 1010 - Vila Universitaria, S\~ ao Paulo, Brasil}
\email{fabiotal@ime.usp.br}
\thanks{M.C. was partially supported by the NCN grant 2021/41/B/ST1/00407. }
\begin{document}

\maketitle
 
\begin{abstract}

We derive quantitative sufficient conditions for rotational chaos and diffusion in annular homeomorphisms, building on the topological criteria established in \cite{paper1}. These conditions depend only on basic properties of the maps, making their implementation straightforward. To demonstrate the effectiveness of the method, we provide computer-assisted proofs of rotational chaos and diffusion for classical families of annular maps and their variations, in both conservative (twist and non-twist) and dissipative settings.

\end{abstract}


\section{Introduction}

Despite significant progress in understanding the concept of \emph{chaos} since the early 20th century, it remains mathematically challenging to determine whether chaotic behavior occurs in a given prescribed system. Even the foundational examples that sparked the development of the theory (the \emph{Three-Body Problem} and the \emph{Van der Pol Equation}) are still not fully understood. From the geometric perspectives of Poincaré and Birkhoff, current approaches continue to fall short of providing an ideal framework for applications \cite{poincaregeometric,birk4}.
Beyond basic models and well-established theoretical criteria, no systematic method exists for detecting chaos that does not rely heavily on the specifics of the system under consideration. In fact, key theoretical concepts needed for a systematic approach—such as \emph{instability regions}, \emph{Birkhoff attractors}, and \emph{homoclinic points}—are extremely difficult to apply without subtle and detailed information, particularly in parameter families that include integrable cases.

\smallskip
Recently, topological criteria have been obtained for the existence of annular chaos, i.e., the presence of rotational horseshoes, which depend on basic properties of maps on the annulus and can therefore be easily implemented \cite{paper1}. These results translate classical concepts introduced by Birkhoff (such as instability regions in the Hamiltonian case and Birkhoff attractors in the dissipative case) into a set of conditions that are satisfied once a finite orbit connecting two prescribed open sets $A$ and $B$ is found.
These open sets can be readily computed once the map is given (see Section \ref{s.prev}).

\smallskip

This article builds upon the mentioned criteria, extending the results of \cite{paper1} into a metric framework. Our new theoretical contributions advance the field of Surface Dynamics, particularly the rotation theory of the two-torus both for homeomorphisms in the isotopy class of the identity (see \cite{MiZi,frankstoro,llibreMcKay,kwapiszratpol,kwapisznonpol,korotal}
as a sample of basic literature), as well as for homeomorphisms homotopic to Dehn twists (see \cite{doeff1997rotation,addas2002existence}). One of the consequences of the new results permits computable bounds on the possible deviations with respect to a rotation interval for area-preserving maps, see Corollary \ref{c.coruno}. 
This addresses an important version of a longstanding open problem in the field, dating back to the first results demonstrating the existence of theoretical bounds for deviations with respect to certain rotation sets, but where actual bounds were not usually known, except for homeomorphisms in the Dehn twist isotopy class \cite{addas2012dynamics}, see
Corollary \ref{c.coruno} and the references immediately preceding it. Note that for classical twist diffeomorphims bounds on vertical displacement can also be obtained from the vertical diameter of invariant curves, and the diffusion is equivalent to the non-existence of any invariant curve. 

A second aspect of the theoretical results introduced here (arguably the most compelling) is the development of clear and practical methods for detecting chaos in prescribed maps that do not rely on subtle or map-specific properties. In this work, we present a first demonstration of how these methods operate in practice, using well-known families of maps as testing grounds and without requiring a deep, map-specific analysis. Specifically, we provide Computer-Assisted Proofs (CAPs) of the existence of diffusion and chaos across various elements of these families, covering a wide range of parameters and variations. These include the Standard Family (SF), Variations of the Standard Family (VSF), the Non-Twist Standard Family (NTSF), and the dissipative version of the Standard Family (DSF). 
All these CAPs follow a common strategy: in theory, one seeks an orbit connecting two given open sets, $A$ and $B$; such an orbit is then located using computer-assisted techniques. To ensure that the numerically obtained orbit corresponds to a true orbit, numerical errors must be rigorously controlled, which is achieved through the use of \emph{Interval Arithmetic}. Notably, the map's differential plays no essential role in any of these implementations, marking a clear contrast with prior approaches to the study of chaos. We now state the theoretical results of this work.

\subsection{Theoretical results}

Let us introduce the theoretical results of this work for which some notation is needed. In this article we are interested in quantitative results, so we work in a metric annuli. The model
we choose is given by $\R^2/_\sim$, where $x\sim y$ if their first coordinate differ by an integer. 
We denote by $\textrm{Homeo}_{\,0}(\A)$ the space of homeomorphisms of $\A$ that are isotopic to the identity. 
Then we will be interested in some subfamilies. Define $\T^2$  as $\A/_{\sim}$ with $x\sim y$ whenever the second
coordinates of representatives of lifts of $x,y$ differ in integer values, and consider the following spaces.

\begin{enumerate}

\item $\mathrm{Homeo}_{\,Dehn\,k,nw}(\A)$ given by those lifts of non-wandering homeomorphisms of $\T^2$ which
are in the homotopic class of the projection (in $\T^2$) of the planar map $D_k(x,y)=(x+k\,y\,,\,y)$ for $k\neq 0$, that is, a Dehn twist.

\item $\mathrm{Homeo}_{\,0,\,nw}(\A)$ given by those lifts of homotopic to the identity and non-wandering homeomorphisms of $\T^2$.

\item $\mathrm{Homeo}_{\,0,\,nw,\rho}(\A)$ given by those elements $\hat{f}$ of 
$\mathrm{Homeo}_{\,0,nw}(\A)$ having fixed points with a rotational difference 
$\rho$ (the definition of this standard concept can be found in Section~\ref{s.prev}).

\end{enumerate}

An element of any of these families lifting a map on $\T^2$ which preserves an absolutely continuous measure 
$\lambda$ with respect to Lebesgue is said to have \emph{zero vertical drift} whenever
$$\int_{\T^2} \phi(x)\,d\lambda=0,$$
where $\phi(x):=\textrm{pr}_2(f(y)-y)$ for any lift $y\in\A$ of $x\in\T^2$.
\smallskip

Let us state the basic dynamical data that we want to find here.
For a map in any of such families, we say that it has \emph{unbounded diffusion} whenever we find orbits 
in $\A$ with vertical displacement arbitrary close to $+\infty$ or arbitrary close to $-\infty$. When both
conditions holds we say that $f$ has \emph{total unbounded diffusion}.
The other definition we need is that of \emph{rotational chaos}. An annular homeomorphism $f$ homotopic
to the identity has a rotational horseshoe whenever the image $f^n(R)$ of some topological rectangle $R$
intersects $R$ in at least two Markovian components, so that these cannot be lifted to the same lift of $R$. 
Such intersections produce a semi-conjugation to the two symbol shift, where each symbol gives
an extra information about displacements of orbits in the universal covering. These two concepts have been
object of deep studies in both the rigorous treatment of chaos and also the numerical one, both in mathematics
and physics, and our theoretical results conform a set of sufficient conditions for finding the just 
introduced properties for annular maps. It is interesting to note that in those cases where total diffusion and
the existence of a rotational horseshoe is proven, the \emph{chaotic sea} obtained by the unstable lamination
of the horeseshoe will be unbounded in both directions of $\A$ and will absorve the topology of the space.

\smallskip

Given a map $\hat{f}\in\mathrm{Homeo}_0(\A)$ define its \emph{maximal vertical displacement} as
$$N(f)=\sup_{x\in\A}\{\mathrm{pr}_2(\hat{f}(x)-x)\}$$
where $\textrm{pr}_2$ is the projection on the second coordinate naturally defined on $\A$. 
We have the following results.

\begin{mainthm}\label{thm:thm1}

Let $f$ belong to  $\mathrm{Homeo}_{\,Dehn\,k\,,\,nw}(\A)$, $k\geq 1$ and let
$$ M=\min\{3\,N(f)+2\,,\,N(f)+4\}. $$
Then, the existence of $z\in\A\,,n\in\Z$ with
$$|\mathrm{pr}_2(f^n(z)-z)|\geq M$$
implies that $f$ has unbounded diffusion. Moreover, if $f$ has a fixed point or zero vertical drift, then $f$ has rotational chaos and total unbounded diffusion.

\end{mainthm}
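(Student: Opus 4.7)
The plan is to reduce the statement to an application of the topological criteria of \cite{paper1}. As recalled in the introduction, those criteria take the following form: for a non-wandering map in the relevant homotopy class, once one exhibits a finite orbit segment that connects two suitably chosen open sets $A,B\subset\A$, one obtains unbounded diffusion (and, with an additional recurrence ingredient, rotational chaos and total unbounded diffusion). The sets $A,B$ are determined entirely by the quantitative data $N(f)$ and $k$. The whole task of the proof is therefore to translate the hypothesis $|\mathrm{pr}_2(f^n(z)-z)|\geq M$ into the existence of an orbit segment meeting both $A$ and $B$.

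Assume without loss of generality $\mathrm{pr}_2(f^n(z)-z)\geq M$. The finite orbit $\{f^j(z)\}_{j=0}^n$ has successive vertical jumps bounded in absolute value by $N(f)$, so its vertical projection covers the interval $[\mathrm{pr}_2(z),\mathrm{pr}_2(z)+M]$ up to gaps of size at most $N(f)$. I would exploit this sequence in two complementary ways, each producing one of the bounds in $M=\min\{3N(f)+2,\,N(f)+4\}$, and then take the minimum.

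The first approach, giving $3N(f)+2$, partitions the vertical range into three strips, each of height roughly $N(f)+1$, forcing the orbit to deposit at least one point in each strip. These three points play the role of bottom, middle and top anchors for the Markovian configuration required by \cite{paper1}, and the Dehn twist horizontal shear together with the non-wandering assumption allows us to concatenate the associated recurrences into a single orbit segment visiting $A$ and $B$ in the prescribed order. The second approach, giving $N(f)+4$, is sharper when $N(f)$ is already moderate: one uses that the sets $A$, $B$ supplied by \cite{paper1} come with a fixed vertical thickness, so that a buffer of only $4$ extra units on top of the worst single jump $N(f)$ is enough to guarantee that the orbit enters and exits both sets with the correct vertical behaviour, the horizontal position again being adjusted through the twist.

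For the \emph{moreover} clause, either a fixed point of $f$, or the zero-vertical-drift hypothesis via Atkinson's recurrence theorem applied to the vertical displacement cocycle, produces orbits whose vertical excursions return arbitrarily close to zero. Combining this recurrence mechanism with the construction above yields orbits diffusing in \emph{both} vertical directions, upgrading unbounded diffusion to total unbounded diffusion and feeding into the chaos criterion of \cite{paper1} to produce an actual rotational horseshoe. The main obstacle I expect is the precise bookkeeping: the constants $3N(f)+2$ and $N(f)+4$ strongly suggest that the exact geometric shape of $A$ and $B$ in \cite{paper1} dictates their value, and the technical work lies in verifying that the anchor points of the finite orbit really land in the correct subsets, for each of the two strategies.
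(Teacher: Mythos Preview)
Your proposal misidentifies the mechanism. The paper does \emph{not} prove unbounded diffusion by feeding the finite orbit $\{f^j(z)\}$ into an $A$/$B$ criterion from \cite{paper1}; it argues by contrapositive. Assuming bounded diffusion, one first shows $f$ has an invariant circloid $\mathcal{C}$, then proves the elementary bound $\sup_{n,x}|\mathrm{pr}_2(f^n(x)-x)|\le VD(\mathcal{C})+1$, and finally bounds $VD(\mathcal{C})$ from above. The constants $3N(f)+2$ and $N(f)+4$ arise from two different ways of bounding $VD(\mathcal{C})$: one considers the invariant annulus between $\mathcal{C}$ and $\mathcal{C}+(0,1)$ (respectively $\mathcal{C}+(0,3)$), takes a horizontal segment $\sigma$ joining the two boundary circloids, and uses a prime-end rotation-number argument (Lemma~\ref{lemaessentialsubset}) to show that $\sigma\cup f^3(\sigma)$ (respectively $\sigma\cup f(\sigma)$) contains an essential compact set. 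Since this essential set must climb above the top of $\mathcal{C}$, one reads off $VD(\mathcal{C})<3N(f)+1$ (respectively $VD(\mathcal{C})<N(f)+3$). The Dehn twist hypothesis enters only to guarantee that $\mathcal{C}$ and $\mathcal{C}+(0,p)$ have distinct rotation numbers, hence are disjoint.

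Your ``three strips'' and ``buffer of $4$ units'' heuristics have no clear target: the criteria recalled from \cite{paper1} concern $N$-disjoint pairs of neighborhoods of fixed points with different rotation numbers, not vertical strips, and no finite orbit of the form you describe is directly fed into them. Without the circloid and the prime-end lemma you have no way to generate the precise constants, and the concatenation step you sketch (``the non-wandering assumption allows us to concatenate the associated recurrences'') is where the real difficulty lies---it is exactly what the circloid machinery replaces. For the \emph{moreover} clause the paper does not use Atkinson recurrence either; it invokes results on the vertical rotation set for Dehn twists (\cite{addas2012dynamics,addas2005some}) and then Proposition~D of \cite{lecalveztal}, packaged as Proposition~\ref{pr:horseshoezerodrift}.
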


For the non-twist case, we have the following version of the result. Before stating it, we should recall
that there is a largely developed theory concerning rotation sets of toral homeomorphisms in the homotopy class
of the identity, where the seminal result
is that this set is given by a compact and convex set \cite{MiZi} of $\R^2$. An important result is that in case the interior of the set is non-empty, then we have positive entropy associated to rotational horseshoes \cite{llibreMcKay}.

\begin{mainthm}\label{thm:thm2}

Let $f$ belong to $\mathrm{Homeo}_{\,0,\,nw,\rho}(\A)$, with a rotational difference $\rho\geq 1$, and let 
$$M_1= 6N(f)+2\ ,\ M_2=4N(f)+2\mbox{ and }M_i =2N(f)+2\mbox{ for }i\ge 3.$$ 
Then, the existence of $z\in \A,\,n\in\Z$ such that 
$$\abs{\mathrm{pr}_2\left(f^n(z)-z\right)}\ge M_{\rho}$$ implies that
$f$ has unbounded diffusion, and in particular $f$ projects to 
an homeomorphism $\check f$ of $\T^2$ whose associated rotation set has non-empty interior.
Furthermore, if $f$ has zero vertical drift, it has rotational chaos and total unbounded diffusion.
\end{mainthm}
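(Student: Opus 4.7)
The plan is to reduce the statement to the topological criterion from \cite{paper1} by using the fixed points with rotational difference $\rho$ as geometric anchors and then constructing two disjoint open sets $A,B\subset\A$ whose mutual connectability is forced by the hypothesis $|\mathrm{pr}_2(f^n(z)-z)|\ge M_\rho$. First, I would fix a lift $\hat f$ such that two fixed points $\tilde p_1,\tilde p_2\in\A$ satisfy $\mathrm{pr}_1(\tilde p_2-\tilde p_1)=\rho$. Their full orbits under $\hat f$ are contained in horizontal strips of width at most $N(f)$, since every single iterate shifts $\mathrm{pr}_2$ by at most $N(f)$ and they are fixed. These strips will act as ``shields'' separating the annulus into regions usable for the criterion.

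Second, I would place $A$ and $B$ as vertical tubes (or small disks) located near $\tilde p_1$ and $\tilde p_2$ respectively, displaced vertically so that any orbit transitioning from $A$ to $B$ must cross the strips surrounding the fixed points a prescribed number of times. The combinatorics force different thresholds according to $\rho$: when $\rho\ge3$ the two fixed points are horizontally far apart, so a single vertical detour of height $\sim 2N(f)+2$ already produces a rotationally non-trivial crossing, which is why $M_i=2N(f)+2$ for $i\ge3$; for $\rho=2$, the available horizontal room is halved and one needs roughly a double-detour, giving $M_2=4N(f)+2$; for $\rho=1$, a triple-detour is necessary, yielding $M_1=6N(f)+2$. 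I would quantify this by the standard argument that a path of vertical displacement exceeding $kN(f)+2$ cannot be isotoped rel endpoints within a strip whose boundary orbits stay within $N(f)$ of the fixed points.

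Third, using the hypothesis orbit $\{f^j(z)\}_{j=0}^{n}$ (up to replacing $f$ by $f^{-1}$ to handle the sign of the vertical displacement), I would exhibit a horizontal translate of this orbit that lands in $A$ initially and in $B$ at time $n$; the choice of translate is possible because $\mathrm{pr}_1$ is periodic and the above geometry only constrains which horizontal fundamental domain is used. Once such a connecting orbit is produced, the topological criterion of \cite{paper1} yields unbounded diffusion. The non-emptiness of the interior of the rotation set of $\check f$ then follows from combining the existence of a fixed point (contributing $(0,0)$), the new horizontal rotation from the rotational difference $\rho$, and a vertical rotation vector produced by the diffusing orbit, giving two linearly independent directions, hence an open subset via convexity \cite{MiZi}.

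Finally, under the zero vertical drift hypothesis, I would invoke an Atkinson-type argument (as used in the analogous step of Theorem~\ref{thm:thm1} via \cite{paper1}) to upgrade one-sided diffusion into total unbounded diffusion: the ergodic average of $\phi$ being zero forces recurrence of the vertical cocycle, so unbounded excursions in one direction are matched by unbounded excursions in the other. Combining total diffusion with the connecting orbit produced above then triggers the rotational horseshoe criterion. The main obstacle I anticipate is the tight case $\rho=1$, where the two fixed points are only one horizontal unit apart: there the verification that the vertical detour cannot be ``short-circuited'' homotopically, and hence that the orbit truly connects $A$ to $B$ in the required essential way, requires careful bookkeeping of how the strips around the fixed points partition $\A$ and how iterates of $f$ interact with these partitions.
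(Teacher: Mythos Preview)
Your approach has a genuine gap at its core. The criterion from \cite{paper1} that you invoke requires an orbit connecting specific neighborhoods $A,B$ of the two fixed points, but the hypothesis only gives you \emph{some} point $z$ with large vertical displacement. You propose to horizontally translate the orbit of $z$ so that it starts in $A$ and ends in $B$; however, horizontal translation only adjusts $\mathrm{pr}_1$, while $A$ and $B$ sit at prescribed \emph{vertical} positions determined by the fixed points. There is no reason the orbit of $z$ passes anywhere near those heights, so the connection you need simply does not follow. Relatedly, your claim that the fixed points' orbits lie in ``strips of width at most $N(f)$'' is confused: a fixed point's orbit is a single point, so no strip is produced, and the ``shields'' you want to build do not exist.

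The paper's argument is structurally different: it is a contrapositive. Assuming bounded diffusion, one shows (via \cite{nakotal}) that $f$ has an invariant circloid $\mathcal{C}$ disjoint from its integer vertical translates. Since $f$ is nonwandering, $\mathcal{C}$ has a single rotation number, and one of the fixed points $x_0$ lies in the open annulus between $\mathcal{C}$ and $\mathcal{C}+(0,1)$. A horizontal arc $\sigma$ from $x_0$ to $\mathcal{C}$ is then iterated: Proposition~4.9 of \cite{paper1} on prime-end rotation numbers forces $\sigma\cup f^n(\sigma)$ to be essential once $n>2/\rho$, and the height reached by $f^n(\sigma)$ is at most $nN(f)$. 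This bounds the vertical diameter $VD(\mathcal{C})<2nN(f)+1$, and Lemma~\ref{lemadiamcircloid} converts this into the diffusion bound $VD(\mathcal{C})+1$. The three values of $M_\rho$ come from the smallest admissible $n$ (namely $n=3,2,1$ for $\rho=1,2,\ge3$), not from a ``detour'' count. Finally, your Atkinson-type argument for the zero-drift upgrade is insufficient: Atkinson gives recurrence of the cocycle, not unbounded excursions in both directions. The paper instead uses the deeper input that the rotation set has nonempty interior and the solution of Boyland's conjecture (\cite{forcing}) to place the Lebesgue rotation vector in the interior, which yields two-sided diffusion; rotational chaos then follows from Proposition~D of \cite{lecalveztal}.
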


This result turns out to be quite significant for the rotation theory on the two-torus 
$\T^2$, not only for the direct conclusion about the \emph{non-empty interior} of rotation sets, but also for those cases where the rotation set is a segment containing two rational points, a setting in which the problem of \emph{deviations} with respect to rotation sets is one of the main challenges. Until now, the state of the art claimed that the deviations with respect to a rotation set given by a segment containing two rational points must be bounded, but there was no way to obtain a computable bound from simple properties of the map, see for instance \cite{davalos,davalosseg,nakotal,korotaldos,korotal,zanata2}. 
The following corollary resolves this.

\begin{maincor}\label{c.coruno}
Let $f:\T^2\to\T^2$ be a non-wandering homeomorphism homotopic to the identity, whose rotation set is an horizontal interval containing a pair of integers whose difference is $\rho\geq 1$. Then the deviations for $f$ is bounded by $M_{\rho}$ as defined
above.
\end{maincor}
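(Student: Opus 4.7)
The plan is to deduce the corollary from the contrapositive of Main Theorem~\ref{thm:thm2}: a horizontal interval has empty interior in $\R^2$, so the conclusion of that theorem --- that the rotation set has non-empty interior --- cannot hold, forcing its quantitative hypothesis on vertical displacements to fail, which is precisely the desired deviation bound. What remains is to fit $f$ into the hypotheses of Theorem~\ref{thm:thm2}, i.e., to produce an annular lift belonging to $\mathrm{Homeo}_{0,nw,\rho}(\A)$.

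Fix a lift $\tilde f:\R^2\to\R^2$ of $f$. The hypothesis furnishes integer vectors $(m,c)$ and $(m+\rho,c)$ in the rotation set of $\tilde f$. Replacing $\tilde f$ by the lift $\tilde f-(m,c)$ shifts every vertical displacement $\mathrm{pr}_2(\tilde f^n(z)-z)$ by the constant $-nc$, and so only rewrites deviations from the mean rotation as absolute vertical displacements; I may thus assume that $(0,0)$ and $(\rho,0)$ lie in the rotation set. Projecting to $\A=\R^2/(\Z\times\{0\})$ then yields a map $\hat f\in\mathrm{Homeo}_{0,nw}(\A)$.

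Next I would invoke Franks' realization theorem for non-wandering toral homeomorphisms isotopic to the identity, which guarantees that each integer vector in the rotation set of the chosen lift is realised by a fixed point of that lift with the corresponding translation. This produces points $\tilde x_0,\tilde x_1 \in \R^2$ satisfying $\tilde f(\tilde x_0)=\tilde x_0$ and $\tilde f(\tilde x_1)=\tilde x_1+(\rho,0)$. Since both translation vectors are horizontal integer vectors, they are trivial in the quotient $\A$, and hence the projections $\hat x_0,\hat x_1\in\A$ are genuine fixed points of $\hat f$ whose rotational difference equals $\rho$. Therefore $\hat f\in\mathrm{Homeo}_{0,nw,\rho}(\A)$, and Main Theorem~\ref{thm:thm2} becomes available with this value of $\rho$.

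To conclude I would argue by contradiction: were there $\hat z\in\A$ and $n\in\Z$ with $|\mathrm{pr}_2(\hat f^n(\hat z)-\hat z)|\ge M_\rho$, Theorem~\ref{thm:thm2} would force the rotation set of $f$ to have non-empty interior in $\R^2$, contradicting the assumption that it is a horizontal segment. Hence $|\mathrm{pr}_2(\hat f^n(\hat z)-\hat z)|<M_\rho$ for every $\hat z$ and every $n$, which by the reduction above is exactly the claimed deviation bound for $f$. The one delicate step is the invocation of Franks' realization for integer vectors that need not be vertices of the rotation set (they lie on a one-dimensional horizontal segment, possibly in its relative interior); citing a version that handles arbitrary rational points in the rotation set under the non-wandering assumption is standard but must be done carefully. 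All other steps are routine translations between the $\T^2$ and $\A$ pictures.
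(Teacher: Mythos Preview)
Your proposal is correct and follows exactly the route the paper intends: the corollary is presented there as an immediate consequence of Theorem~\ref{thm:thm2} (no separate proof is given), and your argument spells out the contrapositive together with the one nontrivial verification, namely that the hypothesis ``the rotation set contains integer points $(m,c)$ and $(m+\rho,c)$'' implies membership in $\mathrm{Homeo}_{0,nw,\rho}(\A)$ via Franks' realization theorem for non-wandering toral homeomorphisms. Your caveat about Franks is well placed but unproblematic here: in the non-wandering setting every rational vector in the rotation set is realized by a periodic orbit (this is the version in \cite{frankstoro}), so integer vectors yield genuine fixed points of the chosen lift.
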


\begin{obs}
The hypothesis that the rotation set is a horizontal segment does not impose major restrictions, since for any map whose rotation set is an interval containing two rational points, some power of the map is conjugate to one satisfying the hypotheses of the corollary, see \cite{kwapiszratpol}.
\end{obs}

As mentioned before, Theorem \ref{thm:thm1} and \ref{thm:thm2} will be applied to different families below, giving CAPs of the existence of rich dynamics
in a wide and systematic manner. Although our cases are mostly analytical, we notice that one could
work with $C^0$ maps without implying any extra difficulty: we do not require any information about the differential of
the functions, which shows a huge difference with the previously involved methods for proving chaos in given families. 
This fact is crucial for our purposes, as we expect to work with Poincaré return maps in the near future. Given the difficulty of controlling their derivatives, these maps are considered in the $C^0$ topology.

\smallskip 
Another property that is frequently used in the literature is the so-called
\emph{twist condition}.
This condition together with that of being analytical serves a set of tools which have
been largely developed in classic treatment of chaos for annular maps, in both the theoretical and numerical treatment 
(see for instance \cite{twistregimMackay,starktwist}).
The applications of Theorem \ref{thm:thm1} and Theorem \ref{thm:thm2} done for variations of the so called Standard Family show
 that we do not require this condition at all. On the other hand, in the next result we combine the basic tools for the twist regime the new input obtained here, and deduce an important result for the so-called
Non-Twist Standard Family (NTSF) \cite{NTSFscholarpedia}. These maps
are given by those having as lifts in $\R^2$
\begin{equation}
	f_{a,b}(x,y)=\left(x+a\,\left[ 1- ( y-b\,\sin(2\pi\,x))^2\right],\,  y-b\,\sin(2\pi\,x)\right). \label{eq:NTSF-formula}
\end{equation}

As before, we abuse notation by referring to the induced maps on $\A$ using the same names. This family of maps constitute the non-twist model for annular chaos, which has gained increasing importance in recent years as the foundation for various physical models \cite{NTSFscholarpedia,ntsmmemoirs,nontwistphysicad,nontwist1}. 
Since this family lies outside the twist regime near $y = 0$, classical techniques previously used in the SF setting cannot be globally applied unless one restricts to regions far from $y = 0$. In contrast, our techniques remain effective in this critical region. By combining both approaches we obtain point (2) of the following results. For point (1) we make use of a combination of results, some being recent \cite{lecalveztal,Koromeysam,jager2021onset}. It is important to note while reading the proof of this result, that what is done for proving (1) works for any analytic family of annular maps. 

\begin{mainthm}\label{thm:thm3}
Let $a \in (0,\infty)$ and $b\in\R$ be given, and set 
\begin{equation}
	M_{a,b}=\,\max\left\{  2\,\sqrt{1+\frac{3}{a}}+|b|\,,\,\frac{2}{a\,|b|}\right\}. \label{eq:Mab-ntsf}
\end{equation}
Then,
\begin{enumerate}

\item For all parameters $a \in (0,\infty)$ and $b\in\R\setminus\{0\}$ the map $f_{a,b}$ has a rotational horseshoe.

\item If there exists a point $z\in\A$ with $\mathrm{pr}_2(z)<-M_{a,b}$ and an integer $n$ such that $\mathrm{pr}_2(f^n_{a,b}(z))>M_{a,b}$, then $f_{a,b}$ has total unbounded diffusion.
\end{enumerate}
\end{mainthm}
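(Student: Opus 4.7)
My plan for part (2) is a direct application of Theorem~\ref{thm:thm2}. I would first check that $f_{a,b}$ fits the framework: the map is isotopic to the identity on $\A$, and since it is a composition of two unit-Jacobian shears it is area-preserving and hence non-wandering in the relevant sense. Its vertical displacement $\mathrm{pr}_2(f_{a,b}(x,y)) - y = -b\sin(2\pi x)$ has supremum $N(f_{a,b}) = |b|$ and integrates to zero over a fundamental domain, giving zero vertical drift.

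To obtain the rotational difference required by Theorem~\ref{thm:thm2}, I would solve the lifted fixed-point equation: $\sin(2\pi x) = 0$ forces $x \in \{0, 1/2\}$, and $a(1-y^2) = n$ gives $y = \pm\sqrt{1-n/a}$, which is real for every integer $n \le a$, including arbitrarily large negative $n$ (yielding $y = \pm\sqrt{1 + |n|/a}$). The resulting fixed point has horizontal rotation $n$, so $f_{a,b}$ admits fixed points of arbitrary rotational difference. Choosing $\rho = 3$ makes $M_\rho = 2N(f_{a,b}) + 2 = 2|b| + 2$. Under the hypothesis of part (2), $|\mathrm{pr}_2(f^n(z)-z)| > 2M_{a,b} \ge 4\sqrt{1+3/a} + 2|b| \ge 4 + 2|b| > M_3$, so Theorem~\ref{thm:thm2} yields unbounded diffusion; combined with zero vertical drift this upgrades to total unbounded diffusion, as required. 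The extra slack built into $M_{a,b}$ (in particular the term $2/(a|b|)$) appears to be calibrated not by the minimal theoretical need but by what is convenient for the later computer-assisted verification across parameters.

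For part (1) my plan is to exploit the analyticity of the family $(a,b)\mapsto f_{a,b}$ together with the cited recent results. For $b\neq 0$, $f_{a,b}$ is a genuine analytic area-preserving perturbation of the integrable non-twist $f_{a,0}$, so I would combine results from \cite{lecalveztal}, \cite{Koromeysam} and \cite{jager2021onset} to conclude that the rotation set of $f_{a,b}$ must have non-empty interior for every such parameter, and then apply \cite{llibreMcKay} to convert non-empty interior into the existence of a rotational horseshoe. The hard part will be assembling these results into a clean dichotomy that covers the full open parameter range $\{(a,b): a>0,\, b\neq 0\}$ without pathological collapse of the rotation set at isolated parameter values; analyticity in the parameters is precisely what rules out such degenerations. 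The author's remark that this argument ``works for any analytic family'' suggests the core of the proof is a general analytic-rigidity principle, which I would then simply instantiate on the concrete formula~\eqref{eq:NTSF-formula}.
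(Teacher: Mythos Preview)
Your plan for part~(2) contains a fundamental gap: Theorem~\ref{thm:thm2} applies only to elements of $\mathrm{Homeo}_{0,nw,\rho}(\A)$, which by definition are lifts of \emph{torus} homeomorphisms isotopic to the identity. The NTSF is not such a lift: the first coordinate of $f_{a,b}$ involves $(y-b\sin(2\pi x))^2$, so the map is not $1$-periodic in $y$ and does not descend to $\T^2$. Consequently Theorem~\ref{thm:thm2} simply does not apply here, and your dismissal of the term $2/(a|b|)$ as mere ``slack'' is unjustified. The paper's proof of part~(2) works directly on $\A$: assuming bounded diffusion it produces an invariant circloid $\mathcal{C}$ and then bounds its vertical position by a case analysis. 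In the twist region ($L_1>0$ or $L_2<0$) it invokes Birkhoff's graph theorem to show $\mathcal{C}$ is a Lipschitz graph, and a shear-argument gives $L_2-L_1<2/(a(L_1+L_2))$; together with the elementary bound $L_2-L_1\ge |b|$ this forces any such circloid to live inside $|y|\le 2/(a|b|)$. In the non-twist region ($L_1<0<L_2$) it uses the explicit fixed points $(0,\pm\sqrt{1-\kappa/a})$ and Proposition~4.9 of \cite{paper1} to bound $L_2-L_1$ by $2\sqrt{1+3/a}+2|b|$. Both cases are needed, and both pieces of $M_{a,b}$ are essential.

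For part~(1) your sketch is too vague and misidentifies the mechanism. There is no Misiurewicz--Ziemian rotation set here (again, no torus structure), so \cite{llibreMcKay} is not the route. The paper argues by contradiction: if $f_{a,b}$ had no horseshoe, then by \cite{lecalveztal} the rotation-number function would be continuous on $\A$, its zero level set would contain a circloid, and by \cite{Koromeysam} this circloid would carry either degenerate fixed points or saddle connections. An explicit Jacobian computation rules out degenerate fixed points, and the key step is that $f_{a,b}$ extends to a biholomorphism of $\mathbb{C}^2$, so Ushiki's theorem forbids saddle connections. The relevant analyticity is that of the map in the space variables (complex extension), not in the parameters as you suggest.
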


Next, we present the results of CAPs demonstrating the existence of rich dynamics for various families and parameters. 
Again, we believe the key comparison with the previous state of the art lies not in the detailed description of a specific map, 
but in how easily the criteria can be applied—requiring no deep information about the map.

\subsection{Twist and non-twist variations of the Standard Family}

Based on Theorem \ref{thm:thm1} and \ref{thm:thm2} we obtain CAPs for the existence of diffusion
and rotational horseshoes. 
This, in particular, will show how easy it is to apply the obtained criteria from one case to another.
Namely, the criteria do not depend on subtle properties of the given maps and still
cover a substantial range of situations once compared with the previous results in the field. In order to show this, 
we introduce CAPs for the existence of unbounded diffusion and rotational chaos for the following family, obtained
by variations of the Standard Family in the twist and non-twist regime.

The Standard Family (SF) is given by those maps having lifts
$$f_{a}(x,y)=(x+a\,y\,,\,y+\sin(2\pi\,(x+a\,y))),\ a\in\R,$$ 
and the variations in the following are given by those maps having lifts 
\begin{equation}
f_{\textbf{h},\textbf{v}}(x,y)=(x+\textbf{h}(y)\,,\,y+\textbf{v}\,(\sin(2\pi\,(x+\textbf{h}(y)))), \label{eq:variation-SF}
\end{equation}
where
\begin{itemize}
\item $\textbf{h}\in C^{0}(\R),\, [-1,1]\subset \textbf{h}([-1,1])$,
\item $\textbf{h}$ lifts a map of $\T^1$,
\item $\textbf{v}\in C^{0}(\R),\ \int_{[0,1]}\textbf{v}(\sin(2\pi\,x))\,dx=0$.
\end{itemize}
It turns out that these maps contain all the possible analytic diffeomorphisms on $\A$ which lift
maps of $\T^2$ having zero vertical drift. We write them in this way
so that they can be regarded as variations of elements of the SF.
Note that for such maps it holds $$N(f)=\max_{x\in[-1,1]} |\textbf{v}(x)|.$$
Although the CAP-part considers smooth maps $\textbf{h}$ and $\textbf{v}$, one could also consider here $\textbf{h}$ or $\textbf{v}$ being $C^0$ but having no further regularity than that. This would 
take us far from the traditional methods where differentiability is a key point. However, we do not demonstrate this here as the analytical case gives enough evidence of the efficiency of the method.

We have considered $f_{\textbf{h},\textbf{v}}$ with
\begin{equation}
	\textbf{h}(y)=y \qquad\textnormal{and}\qquad \textbf{h}(y)=\sin(2\pi\, y) \label{eq:h-choice}
\end{equation} 
for the twist case and for the non-twist case, respectively; together with
\begin{equation}
\begin{array}[c]{lll}
\textbf{v}(x)=x, &\quad  & \textbf{v}(x)=3\ln(x+2)+c,\\
\textbf{v}(x)=x(1-x)+\frac{1}{2}, &\quad  & \textbf{v}(x)=e^{x}-1+c,\\
\textbf{v}(x)=\tan(x) &  &
\end{array}\label{eq:v-choice}
\end{equation} 
for appropriate choices of $c$ so that $\int_{[0,1]}\textbf{v}(\sin(2\pi\,x))\,dx=0$ (see Tables \ref{tb:tableconservative1} and \ref{tb:tableconservative2}). We have obtained the following computer-assisted result, which is meant to show
the flexibility of the derived methods for proving chaos. 
\begin{thm}\label{thm:cap-vsf}
For all possible choices of {\em $\textbf{h}$} and {\em $\textbf{v}$} from (\ref{eq:h-choice}) and (\ref{eq:v-choice}) we have the existence of diffusion and rotational horseshoes for the map (\ref{eq:variation-SF}).
\end{thm}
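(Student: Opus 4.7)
The plan is to reduce, for each of the ten pairs $(\textbf{h},\textbf{v})$ from (\ref{eq:h-choice})--(\ref{eq:v-choice}), the statement to an application of Theorem \ref{thm:thm1} (in the twist case $\textbf{h}(y)=y$) or Theorem \ref{thm:thm2} (in the non-twist case $\textbf{h}(y)=\sin(2\pi y)$), with all remaining verifications carried out by rigorous interval arithmetic. The advantage of working with these criteria is that they depend only on $N(f) = \max_{x\in[-1,1]}|\textbf{v}(x)|$ and on the existence of a single orbit segment achieving a sufficiently large vertical displacement; no information about $Df$ is needed. In all the listed cases the normalisation $\int_0^1 \textbf{v}(\sin(2\pi x))\,dx = 0$ forces zero vertical drift, and analyticity of $f_{\textbf{h},\textbf{v}}$ together with Lebesgue preservation yields the non-wandering hypothesis automatically.

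In the twist case $\textbf{h}(y)=y$, the lift of $f_{\textbf{h},\textbf{v}}$ lies in the homotopy class of the Dehn twist $D_1$. I would compute a rigorous upper bound for $N(f)$ from the closed-form expression for $\textbf{v}$, then $M = \min\{3N(f)+2,\,N(f)+4\}$, both in interval arithmetic. Next I would pick a seed $z_0 \in \A$ (inside a small enclosing rectangle), iterate $f_{\textbf{h},\textbf{v}}$ a validated number of steps $n$ with interval enclosures, and verify the certificate $|\mathrm{pr}_2(f^n(z)-z)| \geq M$ for every $z$ in the starting rectangle. Theorem \ref{thm:thm1} together with zero vertical drift then delivers total unbounded diffusion and a rotational horseshoe.

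In the non-twist case $\textbf{h}(y)=\sin(2\pi y)$ the lift is homotopic to the identity, so applying Theorem \ref{thm:thm2} requires the additional step of certifying two fixed points of $f_{\textbf{h},\textbf{v}}$ whose lifts to $\R^2$ realise a rotational difference $\rho \geq 1$. Such fixed points can be produced among points $(x_0,y_0)$ with $\sin(2\pi y_0)\in\Z$ and $\textbf{v}(\sin(2\pi x_0))\in\Z$, since distinct integer values of $\textbf{v}(\sin(2\pi x_0))$ provide the desired vertical rotational difference; for each $\textbf{v}$ on the list, at least two such values are attainable in $[-1,1]$. I would locate and enclose such fixed points by an interval Newton/Krawczyk test applied to the one-dimensional equations for $x_0$ and $y_0$, then compute $M_\rho$ from Theorem \ref{thm:thm2} and proceed with the same interval-arithmetic orbit propagation to certify $|\mathrm{pr}_2(f^n(z)-z)|\geq M_\rho$.

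The main obstacle is the orbit-propagation step. For each of the ten cases one must exhibit a concrete seed $z$ and iterate count $n$ such that the interval enclosure of the $f_{\textbf{h},\textbf{v}}$-orbit, while unavoidably expanding at each step, still retains a vertical span exceeding the threshold $M$ (or $M_\rho$). I would control this by subdividing the initial rectangle, iterating with $C^0$ (as opposed to variational) enclosures since the criteria are purely $C^0$, and selecting the seed on the basis of non-rigorous numerical exploration before running the validated pass. The resulting data $(N(f),\,M,\,n,\,z)$ for each pair $(\textbf{h},\textbf{v})$ are collected in Tables \ref{tb:tableconservative1} and \ref{tb:tableconservative2}, and the theorem follows by direct invocation of Theorem \ref{thm:thm1} or Theorem \ref{thm:thm2}.
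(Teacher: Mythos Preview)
Your overall reduction is exactly what the paper does: in the twist case apply Theorem~\ref{thm:thm1}, in the non-twist case apply Theorem~\ref{thm:thm2}, and in both cases certify by interval arithmetic the existence of a single orbit segment achieving the required vertical displacement. The paper in fact uses the uniform crude bounds $N(f)<2$, hence $M=6$ in the twist case and $M_2=10$ in the non-twist case, and simply checks for an orbit going from $y=-5$ to above $y=5$ in all ten cases.

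There is, however, a genuine conceptual error in your treatment of the non-twist case. The rotational difference $\rho$ in the definition of $\mathrm{Homeo}_{0,nw,\rho}(\A)$ is \emph{horizontal}, not vertical: a fixed point of $f$ in $\A$ satisfies $F(x_0,y_0)-(x_0,y_0)=(\kappa,0)$ for some integer $\kappa$, so one needs $\textbf{h}(y_0)=\kappa\in\Z$ and $\textbf{v}(\sin(2\pi x_0))=0$ (exactly zero, not merely an integer, since the second coordinate of $\A$ is not periodic). The rotational difference therefore comes from distinct integer values of $\textbf{h}(y_0)=\sin(2\pi y_0)\in\{-1,0,1\}$, giving $\rho=2$, which is what the paper asserts without further computation. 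Your proposed search for ``distinct integer values of $\textbf{v}(\sin(2\pi x_0))$'' would not produce fixed points at all.

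On the orbit-propagation step, the paper takes a different route from your direct $C^0$ iteration with subdivision. It recasts the existence of the trajectory $\{p_i\}_{i=0}^m$ as the zero of a single map $F:\R^{2m}\to\R^{2m}$ (parallel shooting, equation~(\ref{eq:F-Krawczyk})) and validates this zero by the Krawczyk method (Theorem~\ref{th:krawczyk}). This avoids the step-by-step accumulation of interval width that your approach must fight via subdivision; the tiny enclosures in Tables~\ref{tb:tableconservative1}--\ref{tb:tableconservative2} are outputs of the Krawczyk validation, not inputs to a forward iteration. For the iterate counts occurring here ($7$ to $15$) your direct method would likely succeed as well, but the paper explicitly motivates the shooting formulation by its robustness under longer orbits and strong expansion, which becomes essential in the later applications.
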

A remarkable fact  here is that these CAPs can easily be automated and do not require an involved analysis of the prescribed map.

\subsection{The Non-Twist Standard Family}

Using Theorem \ref{thm:thm3} we obtain the following CAP of the existence of diffusion and rotational horseshoes for the NTSF (\ref{eq:NTSF-formula}) for a range of parameters. The choice of such region of parameters is done after visiting the literature, where usually the existence of a KAM tori inside the non-twist regime is investigated. The obtained parameters are meaningful in light of such previous works,
compare with Figure 9 of \cite{nontwist1}. 

\begin{thm} \label{thm:cap-ntsf}
    For at least $240\, 359$ parameter pairs $(a,b)$, chosen out of a uniformly distributed mesh of $1000\times 1000$
    points in the parameter domain $[0,1]\times [0,1]$, we have the existence of unbounded diffusion in the Non-Twist Standard Family (\ref{eq:NTSF-formula}). (See also Figure \ref{fig:ntsf}.)
\end{thm}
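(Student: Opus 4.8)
The statement is essentially a computer-assisted verification that for many parameter pairs $(a,b)$ in the mesh, hypothesis (2) of Theorem \ref{thm:thm3} holds, so that total unbounded diffusion follows. Thus the plan is \emph{not} to prove a new dynamical phenomenon, but to set up and rigorously execute a finite search. First I would make explicit the quantity $M_{a,b}$ from \eqref{eq:Mab-ntsf} as a function of the parameters, and note that on the compact mesh $[0,1]\times[0,1]$ (away from the coordinate axes, where $M_{a,b}$ blows up) it takes finite, explicitly boundable values; in particular one pre-computes, for each candidate pair, a rigorous outer interval enclosure $[\underline{M}_{a,b},\overline{M}_{a,b}]$ using interval arithmetic. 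The goal reduces to: for each such pair, exhibit a point $z$ with $\mathrm{pr}_2(z)<-\overline{M}_{a,b}$ and an iterate $f_{a,b}^{\,n}(z)$ with second coordinate $>\overline{M}_{a,b}$, where the inequalities are verified for a whole interval box around the numerically found orbit.

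The core computational step is the orbit search. For a fixed $(a,b)$ I would start from a candidate initial condition $z_0$ with a sufficiently negative $y$-coordinate (the literature on NTSF diffusion, cf. Figure 9 of \cite{nontwist1}, guides the choice of starting region and of the ``transport channel'' through the shearless region near $y=0$), then iterate the map $f_{a,b}$ in floating point to locate an orbit that crosses from the region $\{y<-M_{a,b}\}$ to the region $\{y>M_{a,b}\}$ in some finite number of steps $n$. Once such a numerical orbit is found, I would replace $z_0$ by a small interval box $Z_0 \ni z_0$ and propagate it rigorously under $n$ applications of the interval-arithmetic enclosure of $f_{a,b}$; if the resulting box $Z_n$ satisfies $\mathrm{pr}_2(Z_n) \subset (\overline{M}_{a,b},\infty)$ while $\mathrm{pr}_2(Z_0) \subset (-\infty,-\overline{M}_{a,b})$, the hypothesis of Theorem \ref{thm:thm3}(2) is validated for that pair, and total unbounded diffusion is certified. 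Running this over the $1000\times 1000$ mesh yields the count of at least $240\,359$ certified pairs; pairs for which the search fails (too close to an axis, trapped by KAM tori, or simply not found within the iteration budget) are discarded, which is why only a subset of the mesh is claimed.

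The main obstacle is controlling the growth of interval enclosures under iteration: the map \eqref{eq:NTSF-formula} is nonlinear and the $x$-update involves the square $(y-b\sin(2\pi x))^2$ multiplied by $a$, so a naive product of boxes over many iterations suffers the wrapping effect and the enclosure $Z_n$ may balloon until it no longer fits in the target half-plane. Mitigating this requires either keeping $Z_0$ extremely small, subdividing $Z_0$ into many sub-boxes and tracking them independently, or using a smarter representation of the propagated set; the trade-off between box size, number of subdivisions, and iteration count $n$ is what ultimately limits how many of the $10^6$ parameter pairs can be certified. A secondary, more routine point is the rigorous enclosure of $M_{a,b}$ itself, including the transcendental term $\sqrt{1+3/a}$ and the reciprocal $2/(a|b|)$, which is handled by standard interval-arithmetic libraries. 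Once these enclosures and the orbit propagations are in place, the conclusion follows immediately from Theorem \ref{thm:thm3}(2) applied pair-by-pair.
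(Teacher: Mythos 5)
Your high-level plan matches the paper: apply part~(2) of Theorem~\ref{thm:thm3} pair-by-pair, first locate a candidate orbit by non-rigorous floating-point iteration, then validate it with interval arithmetic. The flaw lies in the validation step. You propose to propagate an interval box $Z_0$ forward under $n$ applications of an interval enclosure of $f_{a,b}$ and check that $Z_n$ lands in $\{y>M_{a,b}\}$. This is precisely the naive approach the paper explicitly rules out in Section~\ref{sec:parallel-shooting}: the map has a hyperbolically expanding direction, the orbit lengths in Section~\ref{sec:ntsf} reach $n=300$, and under direct forward iteration the enclosure grows roughly like $\lambda^n$ for some $\lambda>1$. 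You flag the wrapping-effect obstacle yourself, but your suggested remedies do not repair it. Making $Z_0$ tiny only delays the blowup and is defeated long before $n=300$ by double-precision resolution; subdividing $Z_0$ into sub-boxes does not help because each sub-box is expanded by the same per-iterate factor; and ``a smarter representation'' is left unspecified, which is where the actual content of the computer-assisted proof lives.

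What the paper does---and what your proposal is missing---is the parallel shooting formulation of Section~\ref{sec:parallel-shooting}. One treats the entire trajectory $(h_0,h_1,p_1,\ldots,p_{m-1})$ as a single unknown in $\mathbb{R}^{2m}$, encodes the orbit relations as the zero set of the map $F$ in \eqref{eq:F-Krawczyk}, and validates $F=0$ inside a product box $[X]$ via the Krawczyk operator (Theorem~\ref{th:krawczyk}). Because this is a Newton-type contraction test applied simultaneously to all orbit points rather than a forward iteration, the coordinate enclosures stay uniformly small regardless of the orbit length, which is what makes the certification of $240\,359$ parameter pairs with up to $300$ iterates feasible. The final inequality is then checked not on a propagated box but on the endpoint $p_m = q_1 + [X]_2\,v_1$, where $[X]_2$ is the validated Krawczyk bound on $h_1$; the constraint $p_0 \in \{y=-B\}$ is built in structurally by the choice $v_0=(1,0)$. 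Without the parallel shooting step, your scheme would certify far fewer pairs than claimed.
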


\begin{figure}
\begin{center}
	\includegraphics[width=6cm]{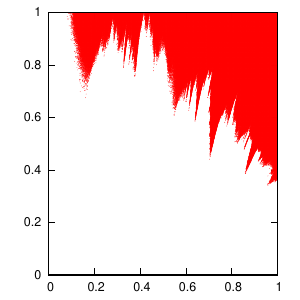}
\end{center}
\caption{Parameter pairs for which we have validated the existence of unbounded diffusion in the Non-Twist Standard Family (in red), from a mesh of $1000\times 1000$ points $(a,b)\in [0,1]^2$\label{fig:ntsf}. This can be copared with
Figure 9 of \cite{nontwist1}.}
\end{figure}


\subsection{Dissipative Standard Family}

Recall that the Dissipative Standard Family is given by those maps on $\A$ having lifts
\begin{equation}
    f_{a,b}(x,y)=(x+a\,y\,\,,\,\,b\,y+\sin(2\pi\,(x+a\,y))) \label{eq:DSF-formula}
\end{equation}
with $0<b<1$, being this value the determinant of the Jacobian of the map. 
In our setting, we assume $a>2$ to ensure that there are fixed points with distinct rotation vectors.
As before, we abuse notation calling the induced maps on $\A$ by the same name.
Applying Corollary B of \cite{paper1}  as described in Section \ref{s.dsfcap}, we obtained a computer-assisted proof of the existence of rotational horseshoes for the following range of parameters, which is taken under the following
consideration: the \emph{twist} parameter $a$ is taken large enough so to have fixed points with a rotational difference, avoiding to consider powers of the maps. The Jacobian $b$ is taken in a wide range, between 0.1 and 0.8, which shows
that the method works far from the strong dissipation regime, where the few existing rigorous proofs of the existence of chaos are usually considered.

\begin{thm}\label{thm:cap-dsf}
In at least $95.95\%$ of the area of the parameter domain $(a,b)\in [3,10]\times[0.1,0.8]$ we have chaos for the Dissipative Standard Family (\ref{eq:DSF-formula}).
\end{thm}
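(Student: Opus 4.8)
The plan is to reduce Theorem \ref{thm:cap-dsf} to a finite collection of rigorous numerical verifications, one per cell of a parameter grid, using the dissipative topological criterion for rotational chaos (Corollary B of \cite{paper1}) in the form adapted to the Dissipative Standard Family in Section \ref{s.dsfcap}. The first step is the standing hypothesis $a>2$, which guarantees that a lift of $f_{a,b}$ has fixed points with two different rotation vectors: the points with $y=0$ and $x\in\{0,1/2\}$ have rotation vector $(0,0)$, while setting $ay=1$ and solving $\sin(2\pi(x+ay))=(1-b)y$ produces fixed points at height $y=1/a$ with rotation vector $(1,0)$. These two rotation vectors, together with the displacement bound $N(f_{a,b})$ computed on the relevant trapping region (as in Section \ref{s.dsfcap}), determine two explicit open sets $A$ and $B$ in $\A$; Corollary B of \cite{paper1} then asserts that as soon as $f_{a,b}^{\,n}(\overline A)\cap B\neq\emptyset$ for some $n\in\N$, the map $f_{a,b}$ has a rotational horseshoe, and hence chaos.

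The second step is the computer-assisted search-and-validation over the parameter box $[3,10]\times[0.1,0.8]$, subdivided into a grid of cells $\mathbf a\times\mathbf b$. For a fixed cell one: (i) represents $A$, $B$ and the fixed-point data as interval boxes valid for all $(a,b)\in\mathbf a\times\mathbf b$; (ii) picks a numerical seed $z_0\in A$ and iterates $f_{a,b}$ with $(a,b)$ at the cell's center to locate a candidate connecting time $n$; and (iii) re-runs the $n$ iterations in interval arithmetic, with $(a,b)$ replaced by $\mathbf a\times\mathbf b$ and $z_0$ by a small box, certifying that the rigorous enclosure of $f_{a,b}^{\,n}(z_0)$ lies inside $B$ while the small enclosure of $z_0$ lies inside $A$. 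Since $f_{a,b}$ is given by an explicit analytic formula with no derivatives entering the criterion, each step is routine modulo interval-arithmetic bookkeeping; each validated cell contributes its area, and one reports the total, $\ge 95.95\%$ of the area of $[3,10]\times[0.1,0.8]$. Cells where the automated procedure fails are discarded and account for the missing $\le 4.05\%$, which is why the statement is a percentage rather than ``all parameters''.

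I expect the main obstacle to be the validation in step (iii) rather than the search. Interval enclosures grow under iteration — the wrapping effect is driven here by the twist term $a\,y$, with $a$ as large as $10$, and by the nonlinearity $\sin(2\pi\,\cdot)$ — so for parameters whose shortest connecting orbit is long the certified enclosure of $f_{a,b}^{\,n}(z_0)$ may overflow $B$. Such parameters are presumably those near the boundary $b\to 1$, where the system is nearly area-preserving and can carry invariant circles that block diffusion, and perhaps those near $a=3$, where the vertical room between the two fixed-point levels is largest relative to the displacement bound. Countermeasures include choosing $A$ and $B$ with generous margins (permitted by the criterion, which only needs a nonempty intersection), subdividing recalcitrant cells, and using sharper enclosure methods; the trade-off between orbit length and enclosure blow-up is what ultimately caps the validated fraction at $95.95\%$. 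A secondary check to carry out uniformly over each cell is that $A$ and $B$ genuinely verify the topological hypotheses of Corollary B of \cite{paper1} — notably their position relative to the relevant Birkhoff-type invariant set — since only the combination of this topological verification with the numerical one yields chaos.
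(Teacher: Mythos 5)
Your proposal captures the right high-level idea --- reduce to Corollary~B of \cite{paper1} and validate its hypotheses over a parameter grid --- but it misdescribes the criterion in a way that would leave the proof incomplete, and it proposes a numerical method that the paper explicitly argues cannot work here.

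First, the topological side. You paraphrase Corollary~B as ``as soon as $f_{a,b}^{\,n}(\overline A)\cap B\neq\emptyset$ for some $n$, the map has a rotational horseshoe,'' but that is the shape of the criterion used in the \emph{conservative} cases (Theorems \ref{thm:thm1}, \ref{thm:thm2}), not of Corollary~B. The dissipative criterion is structurally different: one needs (a) a pair of essential free curves bounding a trapping annulus, (b) two fixed points inside it with rotational difference $\rho$, and most importantly (c) a $\left[\frac{34}{\rho}\right]$-\emph{disjoint pair of neighborhoods} $U_0,U_1$ of those fixed points, whose orbits (in the paper's adapted statement, backward orbits under the inverse map) visit both boundary lines. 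Establishing the d.p.n.\ property is not an afterthought: the paper turns it into the eight conditions (I)--(VIII), which require controlling whole \emph{segments} $S_i^\pm(\delta)$ emanating from the fixed points so that their first $N=\left[\frac{34}{2\kappa}\right]$ iterates stay in prescribed unit boxes $B_\ell^{\pm\kappa}$ \emph{while} some endpoint eventually escapes above $y=B$ or below $y=-B$. Your sets $A$ and $B$ and your ``secondary check'' gesture at this, but no verifiable formulation of the d.p.n.\ hypothesis appears.

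Second, the numerical side. You propose to take a seed $z_0$, iterate it $n$ times in interval arithmetic, and certify the enclosure lands in the target. The paper explicitly discards this: the d.p.n.\ neighborhoods must be taken arbitrarily small around \emph{hyperbolic} fixed points, so the connecting orbit necessarily spends many iterates crawling away from a saddle along its unstable direction, during which direct interval enclosures blow up exponentially. This is why the paper replaces direct iteration by the parallel shooting formulation \eqref{eq:F-Krawczyk}/\eqref{eq:shooting-parameter} validated with the (parametric) Krawczyk method, and supplements it with Lemma~\ref{lem:small-delta} (to shrink $\delta$ and force (II),(IV),(VI),(VIII)) and Lemma~\ref{lem:exit-points} (a cone-condition/hyperbolicity argument in the adapted coordinates $A_0,A_1$ showing that arbitrarily short unstable segments eventually cross a prescribed transversal, giving (I),(III),(V),(VII)). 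Without something playing the role of Lemma~\ref{lem:exit-points}, there is no way to certify that an orbit starting in an \emph{arbitrarily small} segment $S_i^\pm(\delta)$ reaches the lines $y=\pm B$, and the d.p.n.\ hypothesis collapses.

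So the gap is twofold: the criterion you invoke is not the one you need, and the validation method you propose is precisely the one the paper identifies as hopeless for this family. The fix is to state and verify (I)--(VIII), use the hyperbolic-segment lemma to handle the near-fixed-point behavior, and validate the long shooting problems with the parametric Krawczyk theorem over parameter sub-boxes rather than iterating boxes directly.
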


The code for the computer-assisted proofs for Theorems \ref{thm:cap-vsf}, \ref{thm:cap-ntsf} and \ref{thm:cap-dsf} is available in \cite{code}.





\section{Previous results}\label{s.prev}

We begin by stating the definition and a result from \cite{paper1}, which will serve as the theoretical starting point for this work. This includes introducing the key concept of a \emph{disjoint pair of neighborhoods}.
Recall that  $\textrm{Homeo}_0(\A)$ is the set of orientation and ends preserving
homeomorphisms. Consider two fixed points $x_0,x_1$ of $f\in\textrm{Homeo}_0(\A)$.
These points have different \emph{rotation vectors} if
$$(F(x_0)-x_0)-(F(x_1)-x_1)=(\rho,0)\mbox{ with }\rho\in \Z\setminus\{0\}$$
for any lift $F$ of $f$. Such number does not depend
on $F$ and hence is called \emph{rotational difference} of $x_0,x_1$.
 
\smallskip

Given $x_0$, $x_1$ as above, and $U_0,U_1$ close and connected neighborhoods of $x_0,x_1$ respectively,
we say that they form a $N$-\emph{disjoint pair of neighborhoods} for some $N\in\N$
whenever the sets
$$\bigcup_{j=0}^N f^j(U_0)\ \ \ \ \ \ \  ,\ \ \ \ \ \ \ \ \bigcup_{j=0}^N f^j(U_1)$$
are disjoint, and both inessential sets\footnote{Contained inside a topological disk}.
We denote such a pair of neighborhoods by $N$-d.p.n..

\small
\begin{figure}[htbp]
\centering
\def\svgwidth{.5\textwidth}
\begingroup%
  \makeatletter%
  \providecommand\color[2][]{%
    \errmessage{(Inkscape) Color is used for the text in Inkscape, but the package 'color.sty' is not loaded}%
    \renewcommand\color[2][]{}%
  }%
  \providecommand\transparent[1]{%
    \errmessage{(Inkscape) Transparency is used (non-zero) for the text in Inkscape, but the package 'transparent.sty' is not loaded}%
    \renewcommand\transparent[1]{}%
  }%
  \providecommand\rotatebox[2]{#2}%
  \newcommand*\fsize{\dimexpr\f@size pt\relax}%
  \newcommand*\lineheight[1]{\fontsize{\fsize}{#1\fsize}\selectfont}%
  \ifx\svgwidth\undefined%
    \setlength{\unitlength}{589.47639807bp}%
    \ifx\svgscale\undefined%
      \relax%
    \else%
      \setlength{\unitlength}{\unitlength * \real{\svgscale}}%
    \fi%
  \else%
    \setlength{\unitlength}{\svgwidth}%
  \fi%
  \global\let\svgwidth\undefined%
  \global\let\svgscale\undefined%
  \makeatother%
  \begin{picture}(1,0.47619821)%
    \lineheight{1}%
    \setlength\tabcolsep{0pt}%
    \put(0,0){\includegraphics[width=\unitlength,page=1]{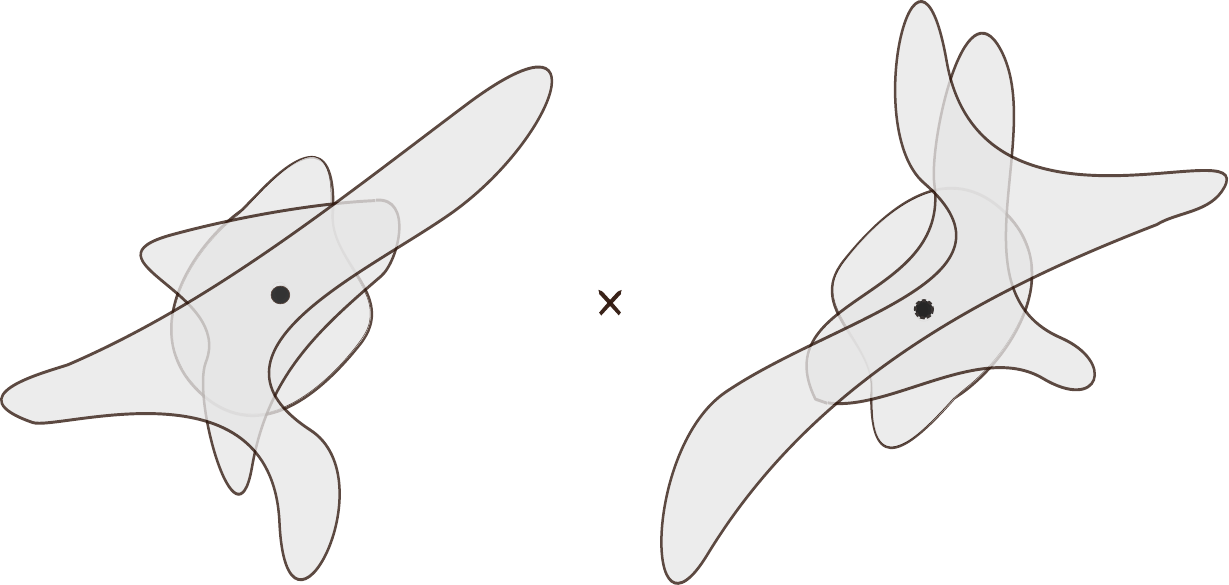}}%
    \put(0.29800717,0.14915056){\color[rgb]{0,0,0}\transparent{0.82469553}\makebox(0,0)[lt]{\lineheight{1.25}\smash{\begin{tabular}[t]{l}$U_0$\end{tabular}}}}%
    \put(0.05874992,0.31327225){\color[rgb]{0,0,0}\transparent{0.82469553}\makebox(0,0)[lt]{\lineheight{1.25}\smash{\begin{tabular}[t]{l}$f(U_0)$\end{tabular}}}}%
    \put(0.26858939,0.41292567){\color[rgb]{0,0,0}\transparent{0.82469553}\makebox(0,0)[lt]{\lineheight{1.25}\smash{\begin{tabular}[t]{l}$f^2(U_0)$\end{tabular}}}}%
    \put(0.70829939,0.06301861){\color[rgb]{0,0,0}\transparent{0.82469553}\makebox(0,0)[lt]{\lineheight{1.25}\smash{\begin{tabular}[t]{l}$U_1$\end{tabular}}}}%
    \put(0.88017352,0.11175264){\color[rgb]{0,0,0}\transparent{0.82469553}\makebox(0,0)[lt]{\lineheight{1.25}\smash{\begin{tabular}[t]{l}$f(U_1)$\end{tabular}}}}%
    \put(0.87419523,0.37105836){\color[rgb]{0,0,0}\transparent{0.82469553}\makebox(0,0)[lt]{\lineheight{1.25}\smash{\begin{tabular}[t]{l}$f^2(U_1)$\end{tabular}}}}%
  \end{picture}%
\endgroup%

\caption{A 2-d.p.n. for the pair of fixed points $x_0,x_1$. The annulus is represented as the plane minus a single point denoted by a cross.}
\end{figure}
\normalsize

Finally, we say that two such fixed points $x_0,x_1$ are $N$-\emph{Birkhoff related}
whenever there exists a $N$-d.p.n. $U_0,U_1$ such that both the forward orbit
of $U_0$ intersects $U_1$ and the forward orbit of $U_1$ intersects $U_0$.

\smallskip

Let us briefly explain how the results of \cite{paper1} are applied in the present work.
In the Hamiltonian case, we do not directly use the main statements from \cite{paper1}. Instead, we obtain results analogous to Theorem B therein, relying on some of the techniques developed in that article (see Section~\ref{s.theoreticalr}) for which
the definitions above are crucial.
In contrast, in the dissipative case, we make direct use of Corollary B from \cite{paper1}, which we restate below.

\begin{corB}[Corollary B in \cite{paper1}]
Assume $f\in\textrm{Homeo}_0(\A)$ has a pair of
disjoint, essential and free curves $\gamma^-,\gamma^+$ so that the annulus $E$ bounded by these
verifies $f^n(E)\subset \textrm{int}(E)$ for some $n\in\N$ and contains two fixed points
$x_0,x_1$ of $f$ with rotational difference $\rho\in\N^*$. If for some $\left[\frac{34}{\rho}\right]$-d.p.n. $U_0,U_1$ of $x_0,x_1$, both the orbits of
$\gamma^-$ and $\gamma^+$
visit both $U_0$ and $U_1$, then the global attractor in $E$ contains a rotational horseshoe.
\end{corB}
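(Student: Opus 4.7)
The statement is the dissipative criterion restated from \cite{paper1}, and although its full proof belongs to that paper, I outline here the route I would follow, which mirrors the strategy of reducing a dissipative rotational horseshoe problem to the Birkhoff-related/d.p.n.\ criterion on a maximal attractor. Set
\[
\Lambda \;:=\; \bigcap_{k\ge 0} f^{kn}(E),
\]
which by the trapping hypothesis $f^n(E)\subset \mathrm{int}(E)$ is a nonempty, compact, fully $f^n$-invariant (hence $f$-invariant, since $f$ commutes with itself) subset of $E$, containing both fixed points $x_0,x_1$. The plan is to upgrade the hypotheses ``the orbits of $\gamma^{\pm}$ visit both $U_0$ and $U_1$'' into a statement about the internal dynamics of $\Lambda$, and then invoke Theorem B (or its analogue for d.p.n.'s with threshold $34$) from \cite{paper1} directly on $\Lambda$.

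First I would observe that since $\gamma^\pm \subset E$ and $f(E)\subset E$, the forward orbits of these curves stay in $E$ and therefore accumulate on $\Lambda$. Because $\gamma^\pm$ are essential disjoint curves separating the two fixed points inside the annulus $E$, their forward images also wind around $\Lambda$ in an essential fashion. The second step is to exploit the visiting hypothesis: each of the two curves, at some finite iterate, intersects both $U_0$ and $U_1$; by pulling back via $f^n$ and taking limits in the nested family $f^{kn}(E)$, one produces orbits (or pseudo-orbits, which inside the attractor can be shadowed) contained in $\Lambda$ that traverse $U_0$ and $U_1$ in both directions. This supplies the Birkhoff-relatedness of $x_0$ and $x_1$ with respect to a $[34/\rho]$-d.p.n., but now with the visiting orbits lying inside the invariant set $\Lambda$. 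The final step is then to apply Theorem B of \cite{paper1}, whose conclusion for Birkhoff-related fixed points with rotational difference $\rho$ via a $[34/\rho]$-d.p.n.\ is precisely the existence of a rotational horseshoe; since the entire construction takes place inside $\Lambda$, the horseshoe is contained in the global attractor.

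The main obstacle is Step 2: transferring the visiting condition from the transient orbits of the bounding curves $\gamma^\pm$ to genuine orbits inside $\Lambda$. The subtlety is that a curve can make a single visit to $U_i$ at some finite time and then be absorbed into $\Lambda$, without any single point of that curve having its $\omega$-limit inside $U_i$. The fix is topological: one combines the essentiality of $\gamma^\pm$ with the fact that $U_0, U_1$ are connected neighborhoods of $\Lambda$-points, so that the intersections $f^j(\gamma^\pm)\cap U_i$ persist after one more application of $f^n$ (up to a small topological adjustment absorbed by the choice of $N = [34/\rho]$), allowing the visiting property to ``stabilize'' on the attractor. Once this transfer is achieved, the quantitative threshold $[34/\rho]$ is inherited verbatim from the Hamiltonian-style result in \cite{paper1}, and no further analysis of the dissipative mechanism is required.
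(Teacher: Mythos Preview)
The paper does not contain a proof of Corollary~B; it is restated verbatim from \cite{paper1} and used as a black box in the dissipative applications (Section~\ref{s.dsfcap}). You correctly acknowledge this at the outset, so there is nothing in the present paper against which to compare your sketch.

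On its own merits, the architecture you propose (form the attractor $\Lambda$, transfer the visiting hypothesis from $\gamma^{\pm}$ to orbits inside $\Lambda$, then invoke the d.p.n./Birkhoff-related criterion) is a reasonable outline, but Step~2 as written is not an argument. The appeal to ``pseudo-orbits, which inside the attractor can be shadowed'' is unsupported in this purely $C^0$ setting: no shadowing lemma is available for general homeomorphisms of surfaces. Likewise, the claim that the intersections $f^j(\gamma^{\pm})\cap U_i$ ``persist after one more application of $f^n$ (up to a small topological adjustment absorbed by the choice of $N=[34/\rho]$)'' has no technical content; the integer $N$ is a fixed input to the d.p.n.\ hypothesis, not a slack parameter that can absorb later errors, and nothing you have said forces a single point of $\gamma^{\pm}$ to have recurrent visits to the $U_i$. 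You have correctly identified the crux (transient visits of the bounding curves versus recurrent behaviour inside $\Lambda$) but your paragraph does not resolve it. A genuine proof, in the spirit of the techniques visible elsewhere in this paper and in \cite{paper1}, would work with the boundary circloids of the attractor and their prime-end rotation numbers (cf.\ Proposition~4.9 of \cite{paper1} as used in Section~\ref{s.theoreticalr}), rather than a limiting/shadowing heuristic.
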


\section{Proof of the theoretical results}\label{s.theoreticalr}

Let us start with some few definitions.
Recall that an \emph{annular conitnuum} $\mathcal{C}$ in $\A$ is a continuum so that 
 $\A\setminus \mathcal{C}$ is given by two essential and disjoint topological annuli accumulating in one of the ends. 
A circloid is given by an annular continuum which does not
properly contain another annular continuum. It is well known that every annular continuum contains some circloid.
If $\mathcal{C}$ is an essential annular continuum in $\A$, we define its \emph{vertical diameter} as 
$$VD(\mathcal{C})=\max_{x\in\mathcal{C}} \textrm{pr}_2(x) - \min_{y\in\mathcal{C}} \textrm{pr}_2(y).$$

For a homeomorphism $f$ of $\A$, we say that $f$ has unbounded upwards (resp. downward) diffusion if 
$$\sup_{n\in\N, x\in\A} \mathrm{pr}_2(f^n(x)-x)= \infty, \mathrm{(resp.} \inf_{n\in\N, x\in\A} \mathrm{pr}_2(f^n(x)-x)= -\infty.\mathrm{)}. $$
 $f$ is said to have unbounded diffusion if it has \emph{unbounded upward} (or \emph{downward}) \emph{diffusion}.
 In any of the two cases it holds
$$\sup_{n\in\Z, x\in\A} \abs{\mathrm{pr}_2(f^n(x)-x)}= \infty,$$
We say that $f$ has \emph{bounded diffusion} if  it does not have unbounded diffusion.
In this case we can define the value $M(f)\in\R$ as the supremmun considered in the last equation.
Note that if $f$ is a non-wandering map of $\A$, then it has upward diffusion if and only if it has downward diffusion.

\smallskip

If $f\in \textrm{Homeo}_{\,\textrm{Dehn}_k,\textrm{nw}}(\A)$  or if $f\in \textrm{Homeo}_{\,0,\textrm{nw}}(\A)$, then one can easily show  that if $f$ has bounded diffusion, it must also be non-wandering in $\A$. Indeed, $f$ lifts a homeomorphism $\check f$ of $\T^2$ that is nonwandering. If $U\subset \A$ is any ball with radius less than $1/4$, then $U$ projects injectively in a ball $\check U \subset \T^2$ and for every $M$ there exists some point $\check x\in \check U$ and integers $0=i_1<i_2<\hdots<i_n$ such that $\check f^{i_j}(\check x)\in \check U$. If $x$ is a lift of $\check x$ in $U$, then one finds integers $w_j, 1\le j\le n$ such that $f^{i_j}(x)\in U+(0,w_j)$, and if $f$ has bounded diffusion with contant $M(f)<n/2$ then $w_{j_1}=w_{j_2}$ for some $j_1< j_2$, showing that $U+(0,w_{j_1})$ is nor wandering and consequently neither is $U$. 

\smallskip

We have the following proposition.

\begin{prop}\label{pr:horseshoezerodrift}
Let $f\in \textrm{Homeo}_{\,\textrm{Dehn}_k,\textrm{nw}}(\A)$  or $f\in \textrm{Homeo}_{\,0,\textrm{nw},\rho}(\A)$ with $\rho\ge 1$ preserve the Lebesgue measure $\lambda$, and assume that $f$ has unbounded diffusion and zero mean vertical drift. Then $f$ has a rotational horseshoe.
\end{prop}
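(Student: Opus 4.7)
The plan is to combine the ergodic-theoretic strength of measure preservation and zero drift with the topological criterion from \cite{paper1}, which converts a $N$-disjoint pair of neighborhoods with mutually connecting orbits (Birkhoff relatedness) into a rotational horseshoe.

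First, I would use the measure preservation and zero drift to upgrade the hypothesis of unbounded diffusion to total unbounded diffusion. Suppose, toward a contradiction, that $f$ has unbounded upward but not downward diffusion, so there is $C>0$ with $\mathrm{pr}_2(f^n(x) - x) \geq -C$ for all $x\in\A$ and $n\geq 0$. Descend the displacement cocycle $\check\phi$ to $\T^2$ and consider $\psi(\check x) := \inf_{n \geq 0} \mathrm{pr}_2(f^n(x) - x) \in [-C,0]$, which is lift-independent in both isotopy classes considered. A direct computation $\psi(\check f(\check x)) = \inf_{m\geq 1}\mathrm{pr}_2(f^m(x)-x) - \check\phi(\check x)$ yields the pointwise inequality $\psi \circ \check f - \psi \geq -\check\phi$; integrating against the invariant Lebesgue measure on $\T^2$ and using $\int \check\phi\, d\lambda = 0$ forces $\lambda$-a.e.\ equality, giving the coboundary decomposition $\check\phi = \psi - \psi \circ \check f$ on a full-measure invariant set. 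On this set the Birkhoff sums $S_n\check\phi$ are uniformly bounded by $2C$; combined with the non-wandering property of $f$ (a consequence of measure preservation) and a density/continuity argument, this boundedness propagates to a topologically dense subset of $\A$, contradicting unbounded upward diffusion. A symmetric argument rules out the opposite case.

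Second, I would construct two fixed points $x_0, x_1$ of $f$ with positive rotational difference together with an $N$-d.p.n.\ $U_0, U_1$ whose forward orbits each meet the other. In the class $\mathrm{Homeo}_{\,0,\,nw,\rho}(\A)$, such fixed points are given directly by the hypothesis. In the Dehn twist class $\mathrm{Homeo}_{\,\mathrm{Dehn}\,k,\,nw}(\A)$, the area-preserving, non-wandering structure combined with the shear nature of the Dehn twist and standard rotation-theoretic arguments (cf.\ \cite{doeff1997rotation,addas2002existence}) produces two fixed points whose lifts differ by a nontrivial horizontal integer shift. With these fixed points in hand, total unbounded diffusion and non-wandering-ness provide orbits starting arbitrarily close to $x_0$ that travel arbitrarily high in $\A$, pick up the required horizontal winding, and return arbitrarily close to $x_1$; symmetrically for $x_1$. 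Choosing $U_0, U_1$ small enough so that their first $N$ iterates are pairwise disjoint and inessential yields an $N$-d.p.n.\ for which $x_0, x_1$ are $N$-Birkhoff related. Applying the topological criterion of \cite{paper1}, adapted to the area-preserving setting as developed in this section, produces the desired rotational horseshoe.

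The main obstacle lies in the first step: the coboundary conclusion from the integration argument holds only $\lambda$-almost everywhere, while the hypothesis of unbounded diffusion is a pointwise topological statement about potentially a single orbit. Bridging this gap requires exploiting non-wandering-ness and the topological structure of $\A$ to push the almost-everywhere bound to a topologically dense conclusion. A secondary difficulty, specific to the Dehn twist case, is guaranteeing the existence of two fixed points of distinct rotational class, which relies on rotation-theoretic tools for Dehn twist homeomorphisms and the zero-drift hypothesis in an essential way.
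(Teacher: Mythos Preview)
Your first step---the Gottschalk--Hedlund style argument upgrading unbounded diffusion to total unbounded diffusion---is correct and is actually more self-contained than what the paper does. The paper instead invokes several external rotation-theoretic results: in the Dehn case, Corollary~2 of \cite{addas2012dynamics} (zero vertical drift forces $0$ into the interior of the vertical rotation interval, hence diffusion in both directions); in the identity-class case, the results of \cite{nakotal,davalos} together with the solution of Boyland's conjecture in \cite{forcing}. Your coboundary argument bypasses all of this. Moreover, the obstacle you flag is not real: once you have $\check\phi=\psi-\psi\circ\check f$ Lebesgue-a.e., the Birkhoff sums $S_n\check\phi$ are continuous for each fixed $n$, and a full-Lebesgue-measure subset of $\T^2$ is dense, so $|S_n\check\phi|\le 2C$ passes from a.e.\ to everywhere immediately. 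No non-wandering or density argument is needed.

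Your second step, however, has a genuine gap and diverges from the paper in a way that matters. The paper does \emph{not} close by verifying Birkhoff relatedness and invoking the d.p.n.\ criterion of \cite{paper1}; it appeals directly to Proposition~D of \cite{lecalveztal}, whose hypotheses are exactly: two fixed points of different rotation number together with orbits going from arbitrarily low to arbitrarily high and vice versa. Total unbounded diffusion plus the $\T^2$-periodicity supply those orbits directly. In contrast, your sentence ``orbits starting arbitrarily close to $x_0$ travel arbitrarily high, pick up the required horizontal winding, and return arbitrarily close to $x_1$'' is the heart of the matter and is not justified: total unbounded diffusion gives you large vertical displacement somewhere, but it does not by itself produce a forward orbit from a prescribed small $U_0$ into a prescribed small $U_1$. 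Establishing that connection is precisely the content of a result like Proposition~D of \cite{lecalveztal} (or the forcing theory behind it), and is not a consequence of the d.p.n.\ framework from \cite{paper1} alone. For the Dehn case, the paper also obtains the required fixed points more concretely than your sketch: one fixed point from Theorem~2 of \cite{addas2005some}, and the second as its vertical integer translate, which the Dehn relation $\tilde f(\tilde x_0+(0,1))=\tilde f(\tilde x_0)+(k,1)$ shows has rotation number shifted by $k$.
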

\begin{proof}
First let us assume that $f\in \textrm{Homeo}_{\,\textrm{Dehn}_k,\textrm{nw}}(\A)$. The hypothesis on the average vertical displacement of $f$ implies that the vertical rotation number of the Lebesgue measure is null (see \cite{addas2012dynamics} for the definition) and as shown in Corollary 2 of that paper, $0$ must belong to the interior of the \emph{horizontal rotation} set of $f$ (see definition int the mentioned paper, when is used the \emph{vertical} case), which implies that $f$ must have both unbounded diffusion upward and and unbounded diffusion downward. Furthermore, by Theorem~2 of \cite{addas2005some}, there must exists some $x_0$ in $\A$ that is fixed. Taking any lift $\tilde f$ of $f$ to $\R^2$ such that $x_0$  lifts to a fixed point $\tilde x_0$ for $\tilde f$, the twist hypothesis shown that $\tilde f(\tilde x_0+(0,1))= \tilde x_0+(k,1)$, so that $f$ has fixed points of different rotation number. Since $f$ lifts a homeo of $\T^2$ and has unbounded diffusion both upwards and downwards, there are points arbitrary low on $\A$ with positive iterates that are arbitrarily high, and likewise there are points arbitrary high on $\A$ with positive iterates that are arbitrarily low. The result then follows directly from Proposition D of \cite{lecalveztal}.

The proof for the case where $f\in \textrm{Homeo}_{\,0,\textrm{nw},\rho}(\A)$ with $\rho\ge 1$ is similar. We can assume that $f$ lifts a torus map $\check f :\T^2\to\T^2$, and in turn $f$ has a lift to $\tilde f$ to $\R^2$ where $\tilde f$ has a point $\tilde x_0$ satisfying $\tilde f(\tilde x_0)=\tilde x_0$ and a point $\tilde x_1$ satisfying $\tilde f(\tilde x_1)=\tilde x_1+(0,\rho)$.  If $\rho_{MZ}(\tilde f)$ is the Misiurewicz-Zieman rotation set for $\tilde f$ as a lif of $\check f$, then the fact that $(0,0)$ and $(\rho,0)$ belong to this rotation set and that $f$ has unbounded diffusion implies by \cite{nakotal, davalos} that $\rho_{MZ}(\tilde f)$ has non-empty interior, and the solution of Boyland's conjecture in \cite{forcing} shows that the rotation vector of Lebesgue, which is of the form $(a,0)$ belongs to the interior, from where we deduce again that $f$ has unbounded diffusion both upward and downward. The rest of the proof follows as in the previous case, as we already know the existence of 2 fixed points for $f$ with different rotation number.
\end{proof}

\subsection{Diffusion for periodic conservative maps homotopic to Dehn twists}

Let $\textrm{Homeo}_{\,\textrm{Dehn}_k}(\A)$ be the set of homeomorphisms of $\A$ that preserve orientation and have a lift to $\R^2$ satisfying $\tilde f (\tilde x+(0,1))= \tilde f (\tilde x)+(k, 1)$, where $k$ is a positive integer. These maps are lifts of homeomorphisms of $\T^2$ isotopic to the linear Dehn twist induced by $D_k(x,y)= (x+ky, y)$. Let also $\textrm{Homeo}_{\,\textrm{Dehn}_k,\textrm{nw}}(\A)$ be the subset characterized by those lifts of non-wandering maps in $T^2$.
We want to prove Theorem  1, which is given by the following statement. 

\begin{thm}\label{thm:difussionconservative twist}
Let $f\in \textrm{Homeo}_{\,\textrm{Dehn}_k,\textrm{nw}}(\A)$ and let $M=\min\{3N(f)+2, N(f)+4\}$. If there exists some $x\in \A$ and $n\in\Z$ such that $\abs{\mathrm{pr}_2(f^n(x)-x)}\ge M$, then $f$ has unbounded diffusion. Moreover,
if $f$ preserves an absolutely continuous measure having has zero vertical drift, then $f$ has rotational chaos.
\end{thm}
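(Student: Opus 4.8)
The plan is to reduce the conclusion to statements about annular continua (circloids) via a standard argument. First I would show that if $f$ has bounded diffusion, then it admits an invariant essential annular continuum with controlled vertical diameter. Concretely, assuming bounded diffusion, $f$ is non-wandering in $\A$ (as explained in the paragraph preceding Proposition~\ref{pr:horseshoezerodrift}), so one can take the connected component of the region of instability, or more simply consider the union of all orbits and pass to a minimal invariant essential annular continuum $\mathcal{C}$ --- a circloid --- lying ``in the middle'' of the annulus. The key estimate I would aim for is that $VD(\mathcal{C}) \le C\cdot N(f)$ for an explicit constant $C$: since points of $\mathcal{C}$ move vertically by at most $N(f)$ under $f$ and the dynamics on $\mathcal{C}$ is recurrent, $\mathcal{C}$ cannot be too tall, otherwise iterating would produce vertical displacements along the continuum that contradict recurrence; here the Dehn-twist hypothesis $\tilde f(\tilde x + (0,1)) = \tilde f(\tilde x) + (k,1)$ enters, because the twist couples horizontal motion to vertical position and forces points at different heights of $\mathcal{C}$ to have different rotation speeds, which is what ultimately bounds the height by something like $N(f)+\text{const}$ or $3N(f)+\text{const}$ (the two bounds in $M$ reflecting two different ways of running this argument).

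Next, I would argue that any orbit must stay within a bounded neighborhood of $\mathcal{C}$: a point $z$ with large $|\mathrm{pr}_2(z)|$ is separated from $\mathcal{C}$ by an essential curve, and since $\mathcal{C}$ is invariant and the two complementary annuli of $\mathcal{C}$ are permuted trivially (ends are preserved), the orbit of $z$ cannot cross $\mathcal{C}$; combined with the displacement bound $N(f)$ per step this traps the orbit, contradicting the hypothesis $|\mathrm{pr}_2(f^n(z)-z)| \ge M$ once $M$ exceeds the relevant multiple of $N(f)$ plus the diameter bound. This is where the precise value $M = \min\{3N(f)+2, N(f)+4\}$ gets pinned down --- the arithmetic of ``diameter of $\mathcal{C}$ plus one step of size $N(f)$ plus slack for the curve and the point'' must be carried out carefully. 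So the existence of $z, n$ as in the statement forces unbounded diffusion.

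For the ``moreover'' part, once we know $f$ has unbounded diffusion and preserves an absolutely continuous measure with zero vertical drift, I would invoke Proposition~\ref{pr:horseshoezerodrift} directly: it is stated precisely for $f \in \textrm{Homeo}_{\,\textrm{Dehn}_k,\textrm{nw}}(\A)$ preserving Lebesgue with zero mean vertical drift and unbounded diffusion, and concludes the existence of a rotational horseshoe. One small point to address is passing from a general absolutely continuous invariant measure $\lambda$ to Lebesgue: since $f$ lifts a torus homeomorphism, one uses that the torus map preserves $\lambda$, and the zero-drift and unbounded-diffusion conditions are what Proposition~\ref{pr:horseshoezerodrift} needs; if the proposition is genuinely stated for Lebesgue one argues that the relevant conclusions (null vertical rotation number, fixed points of different rotation numbers, two-sided diffusion) only use the measure to locate generic recurrent points and hence go through for any absolutely continuous $\lambda$.

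The main obstacle will be the vertical-diameter estimate for the invariant circloid $\mathcal{C}$ and extracting from it the sharp constants in $M$. Bounding $VD(\mathcal{C})$ in terms of $N(f)$ requires genuinely using the Dehn-twist structure --- a naive recurrence argument only gives that $\mathcal{C}$ is invariant, not that it is short --- and getting the specific ``$3N(f)+2$ versus $N(f)+4$'' dichotomy, rather than some cruder bound, is the delicate part; I expect this is where the techniques borrowed from \cite{paper1} (the $N$-d.p.n.\ machinery and Birkhoff-relatedness) do the real work.
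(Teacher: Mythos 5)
Your skeleton is correct and matches the paper's strategy in outline: assume bounded diffusion, extract an invariant essential circloid $\mathcal{C}$, bound its vertical diameter in terms of $N(f)$ and the Dehn-twist structure, deduce the displacement bound $VD(\mathcal{C})+1$ via a fundamental-domain argument, and invoke Proposition~\ref{pr:horseshoezerodrift} for the horseshoe. Your observation about the mismatch between ``absolutely continuous with zero vertical drift'' in the theorem and ``Lebesgue'' in Proposition~\ref{pr:horseshoezerodrift} is a fair one.

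However, there is a genuine gap at the step you yourself flag as ``delicate'': you never actually derive the bound $VD(\mathcal{C}) < 3N(f)+1$ (or the alternative $VD(\mathcal{C}) < N(f)+3$), and the mechanisms you gesture at would not produce it. The appeal to recurrence on $\mathcal{C}$ is not enough: an invariant circloid for a non-wandering map can a priori be arbitrarily tall, and the Dehn hypothesis only tells you that the rotation number on $\mathcal{C}$ is a single number $\rho$ while on $\mathcal{C}+(0,1)$ it is $\rho+k$ --- it does not directly couple height to rotation speed \emph{within} a single circloid. The paper's actual argument works in the open invariant annulus $A$ between $\mathcal{C}$ and $\mathcal{C}+(0,1)$ (or $\mathcal{C}+(0,3)$). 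It uses the Dehn hypothesis to see that the two prime-end rotation numbers of $\partial A$ differ by at least $k\ge 1$, and then Lemma~\ref{lemaessentialsubset} (the analogue of Lemma~8.3 of \cite{paper1}, proved by passing to the prime-end compactification and lifting) shows that for a horizontal segment $\sigma$ joining the two boundary circloids at a fixed height, $\sigma\cup f^3(\sigma)$ contains a compact essential subset of $A$. Essentiality forces $f^3(\sigma)$ to reach above the top of $\mathcal{C}$, so some point of $\sigma$ is displaced vertically by at least $VD(\mathcal{C})-1$ in three steps, giving $VD(\mathcal{C}) < 3N(f)+1$ and hence $M < 3N(f)+2$. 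The competing bound $N(f)+4$ comes from running the same argument with the annulus between $\mathcal{C}$ and $\mathcal{C}+(0,3)$, where the prime-end rotation numbers differ by at least $3$ and one iterate of $\sigma$ suffices. Your guess that the $N$-d.p.n.\ and Birkhoff-relatedness machinery from \cite{paper1} is what does this work is not right either: the relevant imports are the prime-end rotation-number Lemma~\ref{lemaessentialsubset} and the fact (Theorem~A of \cite{koropeki}) that the rotation number on an invariant circloid of a non-wandering map is a singleton; the d.p.n.\ framework is used in the dissipative Corollary~B application, not here.
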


Before proceeding with the proof, let us start with a lema that is very similar to Lemma~8.3 of \cite{paper1}.

\begin{lema}\label{lemaessentialsubset}
Let $A\subset \A$ be an $f$-invariant, essential, open and bounded topolological annulus. Assume that the prime end rotation number of the lower boundary is $\rho$, and the prime ends rotation number of the upper boundary is $\rho+p$ for some integer $p$. Let $\sigma:[0,1]\to \A$ be a curve that joins the upper and lower boundary of $\A$, and is contained in $A$ but for its endpoints. Then $\sigma\cup f^3(\sigma)$ contains an essential compact subset that is contained in $A$. Furthermore, if $p=2$ then we can show the same for $\sigma\cup f^2(\sigma)$, and if $p\ge 3$ then we can show the same for $\sigma\cup f^2(\sigma)$.
\end{lema}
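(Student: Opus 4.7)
The plan is to pass to the universal cover of $A$ and run a slot-counting argument driven by the prime-end rotation numbers of the two boundaries.

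Let $\pi\colon\tilde A\to A$ denote the universal cover, so $\tilde A$ is a topological strip, and let $T$ be the generator of the deck group, acting as a horizontal translation. Choose a lift $\tilde f$ of $f|_A$ whose induced lift on the lower prime-end circle has rotation number $\rho$; orientation preservation and the hypothesis on the boundaries then force the corresponding lift on the upper prime-end circle to have rotation number $\rho+p$. Lift $\sigma$ to a cross-cut $\tilde\sigma$ of $\overline{\tilde A}$. The family $\{T^k\tilde\sigma\}_{k\in\Z}$ is a pairwise disjoint collection of cross-cuts that splits $\tilde A$ into a sequence of strips, producing ``slots'' indexed by $\Z$ on both the lower and upper boundaries.

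The two ingredients I would use are: (i) the classical inequality $|\tilde F^n(\tilde x)-\tilde x-n\rho(\tilde F)|<1$ valid for any lift $\tilde F$ of a circle homeomorphism; (ii) the purely topological fact that a connected cross-cut of $\tilde A$ whose endpoints sit in the $k_1$-th lower slot and the $k_2$-th upper slot must intersect every translate $T^j\tilde\sigma$ for each integer $j$ strictly between $k_1$ and $k_2$. Applied to the endpoints of $\tilde f^n(\tilde\sigma)$, (i) yields that the lower endpoint lies in a slot $k_1$ with $|k_1-n\rho|\le 1$ and the upper endpoint in a slot $k_2$ with $|k_2-n(\rho+p)|\le 1$, so $k_2-k_1\ge np-2$. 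Combined with (ii), $\tilde f^n(\tilde\sigma)$ meets at least $np-2$ distinct translates of $\tilde\sigma$.

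Once two distinct translates $T^a\tilde\sigma$ and $T^b\tilde\sigma$ with $a\neq b$ are known to be crossed by $\tilde f^n(\tilde\sigma)$, I would pick crossings $\tilde P_1\in\tilde f^n(\tilde\sigma)\cap T^a\tilde\sigma$ and $\tilde P_2\in\tilde f^n(\tilde\sigma)\cap T^b\tilde\sigma$, project them to $P_1,P_2\in\sigma\cap f^n(\sigma)$, and form the loop that travels from $P_1$ to $P_2$ along $\sigma$ and returns from $P_2$ to $P_1$ along $f^n(\sigma)$. Its lift in $\tilde A$ runs from $\tilde P_1$ on $T^a\tilde\sigma$ to $T^{a-b}\tilde P_1$ on $T^a\tilde\sigma$, giving winding number $a-b\neq 0$ in $A$, so the loop is essential. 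As a continuous loop it is automatically compact, and it is contained in $\sigma\cup f^n(\sigma)\subset A$ (since the intersections are interior to $A$), producing the desired essential compact subset. For $n=2$ with $p\geq 2$ the estimate already gives $k_2-k_1\ge 2$, and the argument applies unchanged.

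The main obstacle is the borderline case $n=3$, $p=1$: the crude estimate yields only $k_2-k_1\ge 1$, so possibly just one slot is crossed, and the argument above breaks. Closing this gap requires sharpening the slot count by exploiting that $\tilde\sigma$ is a connected cross-cut and that $\tilde f$ is orientation-preserving on the compactification by prime ends, forbidding the two endpoint errors $\epsilon_0,\epsilon_1$ from attaining their worst-case extremes simultaneously and in opposite directions; equivalently, one may follow the line of Lemma~8.3 of \cite{paper1}, which addresses this type of extremal configuration by a direct topological argument at the level of the prime-end compactification. This refined step is the technical core of the lemma.
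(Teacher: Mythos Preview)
Your overall strategy matches the paper's: pass to the prime-end compactification of $A$, lift to the universal cover, and show that $\tilde f^n(\tilde\sigma)$ crosses at least two distinct integer translates of $\tilde\sigma$, from which the essential loop in $\sigma\cup f^n(\sigma)$ is assembled exactly as you describe. That part is fine.

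The gap is precisely where you locate it, and you do not close it. Your proposed mechanism --- that orientation preservation forbids the two endpoint errors from being simultaneously extremal in opposite directions --- is not the right one: the restrictions of the extension to the two boundary circles are, a priori, independent circle homeomorphisms with the prescribed rotation numbers, and nothing coming from orientation preservation of $f$ links the sign of one error to the other. The crude bound $|\tilde F^n(x)-x-n\rho|<1$ really can lose two full units in the difference, which is why your count stalls at $np-2$.

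What is missing is an elementary but sharper estimate for circle homeomorphisms: if $\tilde g$ lifts a circle homeomorphism with rotation number $\alpha$ and $k\in\Z$, then $\alpha<k$ forces $\tilde g(x)-x<k$ for every $x$, and $\alpha>k$ forces $\tilde g(x)-x>k$ for every $x$ (immediate from monotonicity: $\tilde g(x_*)\ge x_*+k$ iterates to $\tilde g^m(x_*)\ge x_*+mk$, hence $\alpha\ge k$). The paper uses this after first normalising the lift so that $0\le 3\rho<1$; applying the sharp bound to $\tilde g^3$ on each boundary then yields directly $y_0<x_0+1$ on the lower boundary and $y_1>x_1+2$ on the upper, so $\tilde f^3(\tilde\sigma)$ must cross both $T\tilde\sigma$ and $T^2\tilde\sigma$. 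This replaces your interval of length $2$ around $n\rho$ by an interval of length $1$ between consecutive integers and removes the borderline deficit. Once you insert this step, the rest of your argument goes through unchanged; the cases $p=2$ with $n=2$ and $p\ge 3$ with $n=1$ are handled identically, as the paper notes.
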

\begin{proof}
The proof is essentially the same as that  of \cite{paper1}.  One can go to the prime ends compactification of $A$, and obtain a homeomorphism $h:A\to \T\times (0,1)$ such that $g=h\circ f \circ h^{-1}$ extends to a continuous homeomorphism of $\T\times [0,1]$, which we still denote $g$, and that has a lift $\tilde g:\R\times [0,1] \to \R\times [0,1]$ such that the rotation number for $\tilde g$ of $\T^1\times\{0\}$ is $\rho$ and of the upper boundary is $\rho+p$. We further assume, with no loss in generality, that $0\le 3 \rho<1$.

Now $h\circ \sigma\mid_{(0,1)}$ extends continuously to an arc $\beta:[0,1]\to \T\times [0,1]$ joining the two boundaries, and let $\tilde \beta$ be a lift of this arc. Denote $z_0=\tilde \beta(0) = (x_0, 0)$ and $z_1=\tilde \beta(1) = (x_1, 1)$. The complement of $\tilde \beta$ has two unbounded components, one to the left that contains $(-\infty,x_0)\times\{0\}$ and $(-\infty,x_1)\times\{1\}$, and one to the right that contains $(x_0, \infty)\times\{0\}$ and $(x_1, \infty)\times\{1\}$. One verifies that, by the assumption on the rotation number, that $\tilde g^3 (z_0)=(y_0,0)$ with $y_0<x_0+1$ and $\tilde g^3 (z_1)=(y_1,1)$ with $y_1>x_1+2$. This implies that $\tilde g^{3}(\tilde \beta)$ intersects both $\tilde \beta+(1,0)$ and $\tilde \beta +(2,0)$, so that $\beta\cup g^3(\beta)$ has an essential compact set in the interior of $\T\times[0,1]$ and the result follows by taking $h^{-1}$ of this set. The proofs for the cases where $p=1$ or $p+2$ are similar, noting that in these cases just noting that, in the first case, the rotation number of $g$ for the upper boundary is at least $\rho+3$, and in the second case, the rotation number for $g^2$ for the upper boundary is at least $\rho+4$.  
\end{proof}

\begin{lema}
If $f\in \textrm{Homeo}_{\,\textrm{Dehn}_k,\textrm{nw}}(\A)$ has bounded diffusion, then $f$ has an invariant circloid.  
\end{lema}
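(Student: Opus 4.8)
The plan is to show that bounded diffusion forces the existence of an invariant essential annular continuum — a circloid — by a compactness/limit argument applied to the maximal invariant annular region between suitable reference curves, combined with the structure theory for non-wandering Dehn-twist maps. First I would recall from the preceding discussion in the excerpt that bounded diffusion implies $f$ (and the induced torus map $\check f$) is non-wandering. The key quantitative input is the bound $M(f)<\infty$: for every $x\in\A$ and every $n\in\Z$ one has $|\mathrm{pr}_2(f^n(x)-x)|\le M(f)$. This means that the ``vertical reach'' of any orbit is uniformly controlled, so that for each integer $m$ the region $\{y\le m\}$ has forward orbit contained in $\{y\le m+M(f)\}$, and symmetrically below.

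Next I would build a candidate invariant set. Consider the set $K^+=\bigcap_{n\ge 0}f^n(\{\,x:\mathrm{pr}_2(x)\le m\,\})$ for a fixed $m$, or more robustly work with the upper and lower ``rotational Birkhoff-type'' sets: let $U=\bigcup_{n\ge 0}f^n(\{\mathrm{pr}_2\le m\})$, which by bounded diffusion is contained in $\{\mathrm{pr}_2\le m+M(f)\}$ and hence is a proper essential sub-annulus of $\A$, open, connected and $f$-forward-invariant (actually $f(U)\subset U$). Its complement $\{\mathrm{pr}_2> m\}$-side analogue is handled symmetrically. The boundary $\partial U$ is then an essential closed $f$-invariant set lying in a bounded horizontal strip. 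To extract an annular continuum from it one takes the connected component of the closure of $U$ that contains a uniformly high essential loop, or equivalently one intersects the nested sequence $\overline{\bigcup_{n\ge 0}f^n(\{\mathrm{pr}_2\le m\})}$ over decreasing $m$ appropriately; the uniform vertical bound guarantees these are all contained in a common compact annular region of $\A$, so a standard nested-intersection argument (intersections of decreasing essential continua in a compact annulus are essential continua) yields an essential, compact, $f$-invariant annular continuum $\mathcal C$. Finally, invoking the classical fact stated in the excerpt that every essential annular continuum contains a circloid, and that a minimal (under inclusion) invariant annular continuum inside $\mathcal C$ — obtained by Zorn's lemma, using that the intersection of a decreasing chain of essential invariant annular continua is again one by compactness — is itself a circloid, we conclude that $f$ has an invariant circloid.

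The main obstacle I anticipate is making the extraction of an \emph{annular continuum} from the forward-invariant open set $U$ fully rigorous: one must ensure that the relevant connected component of $\overline U$ (or of $\partial U$) is genuinely essential and that it is $f$-invariant rather than merely forward-invariant. Essentiality should follow from the fact that $U$ contains every point with $\mathrm{pr}_2\le m$ while omitting every point with $\mathrm{pr}_2> m+M(f)$, so $\overline U$ separates the two ends; invariance of the chosen component requires that $f$ permutes the essential components of the relevant complement and fixes the one accumulating on the lower end, which uses that $f$ preserves orientation and ends. A secondary subtlety is the non-wandering hypothesis: if one wants a two-sided invariant continuum rather than a merely forward-invariant one, the cleanest route is to note that for a non-wandering homeomorphism the forward-invariant essential open set $U$ with $f(U)\subset U$ must in fact satisfy $\overline{f(U)}=\overline U$ on the relevant boundary component, so the boundary circloid is genuinely $f$-invariant; alternatively one applies the same construction to $f^{-1}$ and intersects. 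The remaining steps — nested intersections of essential continua, Zorn's lemma for minimality, and ``annular continuum contains a circloid'' — are standard and I would cite them rather than reprove them.
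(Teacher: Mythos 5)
Your proposal follows essentially the same route as the paper: take the orbit of a horizontal reference set, use bounded diffusion to confine it to a bounded strip, fill it to get an essential $f$-invariant open annulus, and extract an invariant circloid from its boundary. The only (minor) difference is that the paper works with the full two-sided orbit $\bigcup_{i\in\Z}f^i(\cdot)$ from the outset, which yields genuine invariance immediately and sidesteps the forward-versus-full invariance subtlety you flag at the end.
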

\begin{proof}
If $M= \sup_{n\in\Z, x\in\A} \abs{\mathrm{pr}_2(f^n(x)-x)}<\infty$, then the set 
$$A'=\bigcup_{i\in \Z} f^i(\pi(\R\times\{0\}))$$ 
is an open, essential, $f$ invariant set contained in $\pi(\R\times[-M, M+1])$ and so it $A=\textrm{Fill}(A')$, which is the union of $A'$ with all bounded connected components of its complement. Therefore $A'$ is an essential open annulus, and its boundary contains exactly 2 circloids, which are invariant.
\end{proof}

A third lemma follows.

\begin{lema}\label{lemadiamcircloid}
If $f$ has bounded diffusion and $\mathcal{C}$ is an essential $f$ invariant circloid, then $\sup_{n\in\Z, x\in\A} \abs{\mathrm{pr}_2(f^n(x)-x)}\le VD(\mathcal{C})+1$.
\end{lema}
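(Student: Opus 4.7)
The plan is to leverage the fact that $f$ lifts a homeomorphism of $\T^2$ and therefore commutes with the vertical translation $\tau:(x,y)\mapsto(x,y+1)$ on $\A$ (in the Dehn twist case the planar lift commutes with $(0,1)$ only up to a horizontal integer, which is invisible in $\A$). This produces a $\Z$-family of $f$-invariant essential circloids $\mathcal{C}_k:=\mathcal{C}+(0,k)$, $k\in\Z$. Writing $\A\setminus\mathcal{C}_k=L_k\sqcup U_k$ for the lower and upper components, end-preservation forces $f(L_k)=L_k$, $f(U_k)=U_k$. Setting $y_{\min}$ and $y_{\max}$ for the extrema of $\mathrm{pr}_2$ on $\mathcal{C}$, the standard vertical-ray argument gives
\[
\overline{U_k}\subset\{\mathrm{pr}_2\ge y_{\min}+k\},\qquad \overline{L_k}\subset\{\mathrm{pr}_2\le y_{\max}+k\},
\]
where $\overline{U_k}=U_k\cup\mathcal{C}_k$ and $\overline{L_k}=L_k\cup\mathcal{C}_k$, because $\mathcal{C}$ is a circloid so $\partial U=\partial L=\mathcal{C}$.

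The crux is to sandwich every point between two consecutive translates. For $x\in\A$ define
\[
K(x):=\max\{k\in\Z:\ x\in\overline{U_k}\}.
\]
This maximum is achieved: for every $k<\mathrm{pr}_2(x)-y_{\max}$ the circloid $\mathcal{C}_k$ sits strictly below the height of $x$, so the upward vertical ray from $x$ misses $\mathcal{C}_k$ and $x\in U_k$; on the other hand, whenever $x\in\overline{U_k}$ the first displayed inclusion forces $k\le \mathrm{pr}_2(x)-y_{\min}$. By construction $x\in\overline{U_{K(x)}}$ while $x\notin\overline{U_{K(x)+1}}$, and since $\A=\overline{U_{K(x)+1}}\sqcup L_{K(x)+1}$ we get $x\in L_{K(x)+1}$. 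Combining the two memberships with the one-sided bounds yields the sandwich
\[
y_{\min}+K(x)\le \mathrm{pr}_2(x)\le y_{\max}+K(x)+1.
\]

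To finish, note that $f$ preserves each $\overline{U_k}$ setwise, so $K\circ f^n=K$ for every $n\in\Z$. Applying the sandwich to $f^n(x)$ and subtracting gives
\[
|\mathrm{pr}_2(f^n(x)-x)|\le (y_{\max}+K(x)+1)-(y_{\min}+K(x))=VD(\mathcal{C})+1,
\]
as desired. The main point demanding care is the definition of $K(x)$: when $VD(\mathcal{C})\ge 1$ the consecutive circloids $\mathcal{C}_k,\mathcal{C}_{k+1}$ can interlace vertically and $\{\mathcal{C}_k\}$ is not a foliation in any strong sense. What rescues the argument is that it only uses that $\{k:x\in\overline{U_k}\}$ is a non-empty bounded subset of $\Z$, which the vertical-ray argument and the one-sided inclusions provide unconditionally; no nesting of the $\overline{U_k}$'s in $k$ is required.
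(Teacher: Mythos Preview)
Your proof is correct and follows essentially the same idea as the paper's: use the $\Z$-family of invariant circloids $\mathcal{C}+(0,k)$ and trap each orbit between two consecutive translates, whose ``slab'' has vertical extent $VD(\mathcal{C})+1$. The paper phrases this by translating a given point into a fixed fundamental region $D$ between $\mathcal{C}$ and $\mathcal{C}+(0,1)$; you instead introduce the index $K(x)=\max\{k:x\in\overline{U_k}\}$ and show it is $f$-invariant, which is the same content viewed dually. Your version is a bit more explicit and, notably, does not rely on $\mathcal{C}$ and $\mathcal{C}+(0,1)$ being disjoint (the paper's terse argument tacitly uses this to make $D$ a genuine bounded annulus), so your write-up is slightly more self-contained.
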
 
\begin{proof}

Define 
$$D=\mathcal{U}^{-}(\mathcal{C})\cup \mathcal{U}^{+}(\mathcal{C}+(0,1))$$ 
For every $x\in \A$ there exists an integer $l$ such that $x+(0,l)$ lies in $D$ which 
is invariant and has vertical diameter $VD(\mathcal{C})+1$. As the orbit of $x+(0,l)$
can not scape $D$ we obtain the conlcusion of the Lemma.
\end{proof}

\begin{proof}[Proof of Theorem \ref{thm:difussionconservative twist}.]

We will show that if $f$ has bounded diffusion, then 
$$\abs{\mathrm{pr}_2(f^n(x)-x)}< M,$$
with $M$ as defined in the Theorem. Indeed, if $f$ has bounded diffusion, it has an invariant circloid $\mathcal{C}$. Let $\tilde f$ be a lift of $f$. Since $f$ lifts a non-wandering map of $\T^2$, every point in $\mathcal{C}$ has the same rotation number (Theorem A of \cite{koropeki}) for $f$, which we assume is $\rho$. Since the rotation number for $x+(0,p),\,x\in \mathcal{C}$ is $\rho+k\,p$ for every positive integer $p$ (by the \emph{Dehn hypothesis}), then $\mathcal{C}$ and $\mathcal{C}+(0,p)$ are always disjoint. 

First consider the case $p=1$. There exists an invariant open annulus $A$ contained above $\mathcal{C}$ and below $\mathcal{C}+1$. We assume, with no loss of generality, that $\min_{y\in\mathcal{C}} \textrm{pr}_2(y)=0$, otherwise we just do a change of coordinates. If $VD(\mathcal{C})<1$, then $M<2<2+N(f)$ because of Lemma~\ref{lemadiamcircloid}. If not, then there exists a horizontal line segment $\sigma:[0,1]\to \pi(\R\times\{1\})$ joining  $\mathcal{C}$ and $\mathcal{C}+1$ and contained in $A$ but for its endpoints. As deduced above, the prime ends rotation number of the lower boundary of $A$ is $\rho$, and the prime ends rotation number of the upper boundary of $A$ is  grater or equal than $\rho+1$. Thus Lemma~\ref{lemaessentialsubset} show that $\sigma\cup f^3(\sigma)$ contains a compact essential subset of $A$. But this compact set separates $\mathcal{C}$ from the upper end of $\A$, and since there exists some $x=(VD(\mathcal{C}), \theta)$ in $\mathcal{C}$, there must be some $y\in \sigma$ such that if $z=f^3(y)$, then $z=(a, \theta)$, with $a>VD(\mathcal{C})$, and so $\mathrm{pr}_2(f^3(y)-y)> VD(\mathcal{C})-1$.  Since $\mathrm{pr}_2(f^3(y)-y)\le 3 N(f)$, we deduce that $VD(\mathcal{C})<3N(f)+1$ and so $M<3N(f)+2$. The other bound on $M$ is obtained in the same way, but using the annulus bounded by $\mathcal{C}$ and $\mathcal{C}+(0,3)$.

\smallskip

This way we obtain the sufficient condition for having unbounded diffusion. For the existence of the rotational
horseshoe in the area preserving situation one employees Proposition  \ref{pr:horseshoezerodrift}.
\end{proof}

\subsection{Rotation sets with non-empty interior for area preserving homeomorphisms}

In this sections we deal with homeomorphisms $f$ of the annulus that lift non-wandering homeomorphisms of the $2$-torus in the isotopy class of the identity, that is, we assume that $f\in \textrm{Homeo}_{\,0,nw}(\A)$.  Recall from the introduction that if $f$ is in $\textrm{Homeo}_{\,0,nw,\rho}(\A)$, then, there exists a lift $\tilde f:\R^2\to\R^2$ of $f$, a positive integer $\rho$ and two fixed points $x_0, x_1$ in $\A$ such that, if $\tilde x_0$ and $\tilde x_1$ are lifts of $x_0$ and $x_1$ respectively, then $\tilde f(\tilde x_0)=\tilde x_0$ and $\tilde f(\tilde x_1)=\tilde x_1+(0,\rho)$.

\begin{thm}\label{th:rotationsetwithinterior}
Let $f\in \textrm{Homeo}_{\,0,nw,\rho}(\A)$ and 
$$M_1= 6N(f)+2\ ,\ M_2=4N(f)+2\ ,\ M_i =2N(f)+2\mbox{ for }i\ge 3.$$ 
If for  some $x\in \A$ it holds 
$$\abs{\mathrm{pr}_2(f^n(x)-x)}\ge M_{\rho},$$
then $f$ has unbounded diffusion. In particular, $f$ projects to an homeomorphism $\check f$ of $\T^2$
whose rotation set has non-empty interior.

Furthermore, in case $f$ preserves an absolutely continuous measure $\lambda$ and
 has zero vertical drift, then $f$ has a topological horseshoe and total diffusion. 
\end{thm}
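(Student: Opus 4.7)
The plan is to argue by contradiction, assuming $f$ has bounded diffusion and ultimately deriving a bound on $M(f) := \sup_{n\in\Z,\,x\in\A}|\mathrm{pr}_2(f^n(x)-x)|$ strictly smaller than $M_\rho$. The first step is to repeat the argument from the proof of Theorem~\ref{thm:difussionconservative twist}: since $M(f)<\infty$, the forward-backward orbit $\bigcup_{i\in\Z}f^i(\pi(\R\times\{0\}))$ is contained in a bounded vertical strip, and its filling is an essential $f$-invariant annulus one of whose boundary components is an essential $f$-invariant circloid $\mathcal{C}$; Koropecki's theorem (as used in the Dehn-twist case) assigns to it a well-defined horizontal rotation number $\alpha$. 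Because $f$ lifts a torus map it commutes with the vertical translation $T(x,y)=(x,y+1)$, so every translate $\mathcal{C}_p := \mathcal{C}+(0,p)$ is again an invariant essential circloid with the same rotation number $\alpha$.

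Next I would exploit the two fixed points to locate an invariant open annulus with large prime-end twist. Writing $n_0, n_1\in\Z$ for the horizontal rotation numbers of $x_0, x_1$ with $n_1-n_0=\rho$, the fact that every point of an invariant circloid inherits its rotation number implies that $x_0$ and $x_1$ cannot both lie in $\bigcup_p\mathcal{C}_p$. After a suitable vertical integer shift, pick two translates $\mathcal{C}_{p^-}, \mathcal{C}_{p^+}$ such that both $x_0, x_1$ lie in the closure of the open invariant essential annulus $A$ bounded between them, and no intermediate translate separates them. The annular rotation-interval inequality for non-wandering homeomorphisms, applied in the prime-end compactification of $A$, then tells us that the prime-end rotation numbers $\rho^\pm$ of the two sides of $A$ (viewed from within) bracket the rotation numbers of every point of $\overline{A}$ whose rotation number is defined; in particular they bracket both $n_0, n_1$, giving $\rho^+ - \rho^- \ge |n_1 - n_0| = \rho$.

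The third step applies Lemma~\ref{lemaessentialsubset} to $A$ with twist parameter $\rho$: a vertical segment $\sigma \subset \overline{A}$ joining the two boundary circloids, combined with $f^k(\sigma)$ where $k=3$ for $\rho=1$, $k=2$ for $\rho=2$, and $k=1$ for $\rho\ge 3$, produces an essential compact subset of $A$. Repeating the displacement argument from the proof of Theorem~\ref{thm:difussionconservative twist}, the image endpoint must rise (or fall) by more than the vertical span of one circloid, yielding a diameter bound of the form $VD \le kN(f)+1$ on the relevant annular region. Combining this with the inequality $M(f) \le VD(\cdot)+1$ of Lemma~\ref{lemadiamcircloid} applied to \emph{both} boundary circloids of $A$ — which accounts for the extra factor of $2$ compared with the Dehn-twist case, where a single circloid absorbed the bound — we obtain $M(f) < 2kN(f) + 2 = M_\rho$, contradicting the hypothesis. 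Unbounded diffusion therefore holds; that the rotation set then has non-empty interior follows from \cite{nakotal,davalos} once we note that $(0,0), (\rho,0) \in \rho_{MZ}(\tilde f)$ thanks to the fixed points. Finally, under the zero-vertical-drift hypothesis, Proposition~\ref{pr:horseshoezerodrift} supplies the rotational horseshoe and total unbounded diffusion.

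The main difficulty lies in the second step: even identifying the correct annulus $A$ requires a case analysis of how $x_0, x_1$ sit relative to $\bigcup_p\mathcal{C}_p$ (both on circloids — which would immediately contradict $\rho \ne 0$; one on and one off; both in the same gap; both in different gaps), and the annular rotation-interval inequality must be formulated carefully enough (via the prime-end compactification and the non-wandering hypothesis) to bracket both $n_0$ and $n_1$. Getting the precise constants $M_\rho$, in particular the factor $2$ improvement over the Dehn-twist constants $3N(f)+2$ etc., relies on a tight accounting of both circloid-diameter contributions to $M(f)$.
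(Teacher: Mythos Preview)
Your second step contains a genuine gap. The ``annular rotation-interval inequality for non-wandering homeomorphisms'' you invoke --- that the two prime-end rotation numbers $\rho^{\pm}$ of $A$ bracket the rotation number of every interior point --- is simply false, even for area-preserving maps. Take $f(x,y)=(x+g(y),\,y)$ on $\A$ with $g$ a $1$-periodic function satisfying $g(0)=0$ and $g(1/2)=\rho$. This map lies in $\mathrm{Homeo}_{0,\mathrm{nw},\rho}(\A)$ and has bounded diffusion; the circles $\T\times\{0\}$ and $\T\times\{1\}$ are invariant circloids bounding a gap $A$, and both prime-end rotation numbers of $A$ equal $0$. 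Yet $A$ contains a fixed point of rotation number $\rho$. So $\rho^{+}-\rho^{-}=0$, not $\ge\rho$, and your application of Lemma~\ref{lemaessentialsubset} never gets off the ground. Your explanation of the extra factor $2$ is also off: the two boundary circloids of $A$ are integer translates of one another, hence have identical vertical diameter, so applying Lemma~\ref{lemadiamcircloid} ``to both'' gives the same inequality twice, not a doubling.

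The paper's argument sidesteps both problems by working with a \emph{single} fixed point. Since every point of $\mathcal{C}$ has a common rotation number $\alpha$, at least one of $x_0,x_1$ has rotation number at distance $\ge\rho/2$ from $\alpha$; take that one, say $x_0$, and place it in the gap between $\mathcal{C}$ and $\mathcal{C}+(0,1)$. A \emph{horizontal} arc $\sigma$ from $x_0$ to one boundary circloid is then used, and instead of Lemma~\ref{lemaessentialsubset} the paper invokes Proposition~4.9 of \cite{paper1}: if $\sigma\cup f^n(\sigma)$ were inessential, a prime-end rotation number of $\mathcal{C}$ would lie in $[-1/n,1/n]$, contradicting the fact that it equals $\alpha$ with $|\alpha|\ge\rho/2$ once $n>2/\rho$. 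The essential set $\sigma\cup f^n(\sigma)$ must then reach both horizontal lines $y=1$ and $y=VD(\mathcal{C})$; since $\sigma$ sits at the fixed height $\mathrm{pr}_2(x_0)$ somewhere in $(0,VD(\mathcal{C})+1)$, the image $f^n(\sigma)$ must travel at least $(VD(\mathcal{C})-1)/2$ vertically (the worst case being $x_0$ in the middle). This is where the factor $2$ genuinely comes from: $nN(f)>(VD(\mathcal{C})-1)/2$, hence $VD(\mathcal{C})<2nN(f)+1$ and $M(f)<2nN(f)+2=M_\rho$ for the appropriate $n=3,2,1$.
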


Before proving the theorem, we will show the following lemmas. Let us introduce some standar notation.
An annular continua $\mathcal{C}$ has the two unbounded connected components of the complements.
Call $\mathcal{U}^+(\mathcal{C})$ to that one for which the restriction of $\textrm{pr}_2$ is bounded below,
and call $\mathcal{U}^-(\mathcal{C})$ to the other one. Given two annular continua 
$\mathcal{C}_1,\mathcal{C}_2$ we say that $\mathcal{C}_2$ is above $\mathcal{C}_1$ iff 
$\mathcal{C}_2$ is contained in $\mathcal{U}^+(\mathcal{C}_1)$.

\begin{lema}
If $f\in\textrm{Homeo}_{\,0,nw,\rho}(\A)$ and has bounded diffusion, then there exists an circloid $\mathcal{C}$ that is invariant by $f$. Furthermore, $\mathcal{C}$ is disjoint from $\mathcal{C}+(0,1)$. 
\end{lema}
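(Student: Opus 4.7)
The plan is to follow a two-step strategy mirroring the Dehn twist case for the existence, and then to exploit the hypothesis $\rho\geq 1$ together with Koropecki's rotation-number theorem to deduce disjointness from the unit vertical shift.

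For the existence of an invariant circloid, the argument is essentially verbatim from the Dehn twist setting. Bounded diffusion with constant $M$ implies that the set $A'=\bigcup_{i\in\Z}f^i(\pi(\R\times\{0\}))$ is an $f$-invariant, essential, open subset of $\A$ contained in the horizontal strip $\pi(\R\times[-M,M])$; its fill $A=\mathrm{Fill}(A')$ is then an $f$-invariant essential open annulus with compact closure. The topological boundary of $A$ decomposes into two disjoint $f$-invariant essential annular continua, and each of these contains an $f$-invariant circloid by the standard fact that every annular continuum contains one (applied, by a Zorn-type argument, to nested sequences of invariant essential sub-annular continua).

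For the disjointness claim, I would argue by contradiction: suppose the chosen $f$-invariant circloid $\mathcal{C}$ intersects $\mathcal{C}+(0,1)$. Picking $p,q\in\mathcal{C}$ with $q-p=(0,1)$ and a path $\gamma$ joining them inside the connected set $\mathcal{C}$, the projection $\pi_{\T^2}(\gamma)$ is an essential loop in $\T^2$ of homology class $(k,1)$ for some $k\in\Z$, while $\pi_{\T^2}(\mathcal{C})$ is already essential in the horizontal direction by the annular property of $\mathcal{C}$. Hence $\pi_{\T^2}(\mathcal{C})$ is doubly essential, and $\T^2\setminus\pi_{\T^2}(\mathcal{C})$ is a disjoint union of open topological disks, each $\check f$-periodic by the non-wandering hypothesis, whose lifts to $\R^2$ are bounded and simply connected. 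On the other hand, Theorem~A of~\cite{koropeki} assigns a unique horizontal rotation number $\alpha$ to $\mathcal{C}$, while the fixed points $x_0,x_1$ carry horizontal rotation numbers $0$ and $\rho$; since these cannot both equal $\alpha$, at least one of them, say $x_1$, must lie in a $\check f$-invariant complementary disk $U$ whose lift $\tilde U\subset\R^2$ satisfies $\tilde f(\tilde U)=\tilde U+(\rho,0)$.

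The main obstacle is extracting a genuine contradiction from this configuration. The route I would pursue is to combine bounded diffusion with the spatial arrangement in $\A$ of the bounded $f$-invariant disks $U^*+(0,k)$ that tile the complement of $\bigcup_k(\mathcal{C}+(0,k))$, together with the presence of the second fixed point $x_0$ with rotation number $0$: a careful analysis of how $\mathcal{C}$ separates these disks, coupled with the horizontal twist between $x_0$ and $x_1$, should force an orbit that violates the vertical diffusion bound. An alternative and potentially cleaner route would be to build $\mathcal{C}$ from scratch as a minimal $f$-invariant essential annular continuum confined to a vertical strip of height strictly less than $1$, where the hypothesis $\rho\geq 1$ is used (via a Zorn argument over nested invariant essential annular continua) to guarantee that such a minimal object exists; in that case disjointness from $\mathcal{C}+(0,1)$ is automatic from the vertical-diameter bound.
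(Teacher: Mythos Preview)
Your existence argument for an invariant circloid is fine and matches the Dehn-twist lemma. The gap is entirely in the disjointness claim, and you acknowledge it yourself: after setting up the doubly-essential configuration and the periodic complementary disks, you never extract a contradiction. There is also a minor technical slip earlier: circloids need not be path-connected, so the path $\gamma\subset\mathcal{C}$ joining $p$ to $q$ may not exist; one should instead argue directly that $\bigcup_{k\in\Z}(\mathcal{C}+(0,k))$ is a connected closed $f$-invariant set whose projection to $\T^2$ is fully essential, which reaches the same configuration. But even granting that, the fact that one of the fixed points sits in a bounded $f$-invariant disk in $\A$ does not obviously contradict anything, and your ``alternative route'' via a height-less-than-one minimal invariant annular continuum is just a wish, not an argument: nothing you wrote explains why the Zorn limit should have vertical diameter below $1$.

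The paper sidesteps all of this by invoking an external structural theorem. Bounded diffusion forces the Misiurewicz--Ziemian rotation set of $\tilde f$ (as a lift of $\check f$) to be a nondegenerate horizontal segment containing $[0,\rho]\times\{0\}$, and then Theorem~5.1 of \cite{nakotal} produces a $\check f$-invariant horizontal annulus $\check A\subset\T^2$. Any lift $A$ of $\check A$ to $\A$ is an $f$-invariant essential open annulus that is automatically disjoint from $A+(0,1)$; its upper boundary circloid $\mathcal{C}=\mathcal{C}_+$ is then disjoint from $\mathcal{C}+(0,1)$ by a one-line ordering argument ($\mathcal{C}_+-(0,1)$ is not above $\mathcal{C}_-$, hence is disjoint from $\mathcal{C}_+$). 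The substantial work is thus outsourced to \cite{nakotal}, and neither of your proposed routes substitutes for that input.
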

\begin{proof}
This is mostly already done in Theorem 5.1 of \cite{nakotal}. Since $f$ has bounded diffusion, there exists $M>0$ such that, for all $x\in\A$ and all $n\in\Z$, $\abs{\mathrm{pr}_2(f^n(x)-x)}< M$. Since $f$ lifts an homeomorphism $\check f$ of $\T^2$ homotopic to the identity, and $\rho>0$, then $\tilde f:\R^2\to\R^2$ also lifts $\check f$ and its Misiurewicz-Zieman rotation set as a lift of $\check f$ must be a nondegenerate horizontal segment containing $[0,\rho]\times\{0\}$. Theorem 5.1 of \cite{nakotal} then applies, and there exists some $\check f$ invariant horizontal annulus $\check A$ in $\T^2$. If $A$ is a lift of $\check A$ to $\A$, then $A$ is an $f$ invariant essential annulus, disjoint from its integer translates. There exists exactly two circloids on the boundary of $A$, $\mathcal{C}_{-}$ and $\mathcal{C}_{+}$ which are disjoint, with $\mathcal{C}_{-}$ below $\mathcal{C}_{+}$. Furthermore, $\mathcal{C}_{+}-(0,1)$ is not above $\mathcal{C}_{-}$, so $\mathcal{C}_{+}-(0,1)$ and $\mathcal{C}_{+}$ must be disjoint, which implies that, if $\mathcal{C}=\mathcal{C}_{+}$, then  $\mathcal{C}$ is disjoint from all its translates.
\end{proof}


We are ready to proceed with the main result of the subsection.

\begin{proof}[Proof of Theorem~\ref{th:rotationsetwithinterior}]
 
We already know from the previous lemma that $f$ has an invariant circloid $\mathcal{C}$ which is disjoint from its integer vertical translates. In particular, the orbit of every point is bounded (as it is contained in an invariant annulus whose boundary is made of 2 different integer translates of $\mathcal{C}$), and since $f$ lifts a nonwandering homeomorphism of $\T^2$ this implies that $f$ itself must be nonwandering. As a consequence, since every circloid for a nonwandering homeomorphism of $\A$ has the same rotation number for $\tilde f$ (see, for instance, Theorem~C of \cite{paper1}), every point of $\mathcal{C}$ has the same  rotation number. We will treat the case where its rotation number is larger or equal to $\rho/2$, where in particular, $x_0$ does not belong to any integeger translate of $\mathcal{C}$.  The case where its smaller than 
$\rho/2$ can be treated analogously, but using $x_1$ instead of $x_0$.

We assume, with no loss in generality, that $x_0$ lies in the topological annulus $A$ defined by $\mathcal{U}^{+}(\mathcal{C}) \cap \mathcal{U}^{-}\left(\mathcal{C}+(0,1)\right)$. Also, we may assume that $\min_{y\in\mathcal{C}} \textrm{pr}_2(y)= 0$. It follows that $0<\textrm{pr}_2(x_0)<VD(\mathcal{C})+1$. One can find an horizontal arc $\sigma:[0,1]\to\pi(\R\times\{\textrm{pr}_2(x_0)\})$, such that $\sigma\mid_{[0,1)}$ lies in $A$ with $\sigma(0)=x_0$ and with $\sigma(1)$ in either $\mathcal{C}$ or in $\mathcal{C}+(0,1)$. We claim that $\sigma\cup f^n(\sigma)$ contains a compact essential subset if $2/\rho <n$. Indeed, if this was not the case, then there would exist a neighborhood $U$ of $\sigma$ such that $\bigcup_{i=0}^{n} f^{i}(U)$ is inessential and contains $x_0$. By Proposition 4.9 of  \cite{paper1} this implies that one of the prime ends rotation number for $\mathcal{C}$ lies in the interval $[-1/n, 1/n]$ a contradiction since this interval does not intersect $[\rho/2, \infty)$. 

But this implies that $\sigma\cup f^n(\sigma)$ must intersect both the lines 
$\pi(\R\times\{1\})$ and $\pi(\R\times \{VD(\mathcal{C})\})$. One obtains that either 
$$\textrm{pr}_2(x_0)\le(VD(\mathcal{C})+1)/2\ \mathrm{  and }\  \max_{z\in f^n(\sigma)}\textrm{pr}_2(z)>VD(\mathcal{C}),$$ which implies that $n\cdot N(f)>(VD(\mathcal{C})-1)/2$ or similarly that  
$$\textrm{pr}_2(x_0)\ge(VD(\mathcal{C})+1)/2\ \mathrm{  and }\ \min_{z\in f^n(\sigma)}\textrm{pr}_2(z)<1,$$ 
again obtaining that $n\cdot N(f)>(VD(\mathcal{C})-1)/2$.
Hence, we deduce that $VD(\mathcal{C})<2n\,N(f)+1$. Now the same argument as in Lemma~\ref{lemadiamcircloid} shows that 
$$\sup_{n\in\Z, x\in\A} \abs{\mathrm{pr}_2(f^n(x)-x)}\le VD(\mathcal{C})+1 <2n\,N(F)+2$$

 Note that if $\rho=1$ we must take $n\ge3$ to satisfy $2/\rho <n$, which yelds $M_1=6N(f)+2$, while if $\rho=2$ we may take $n=2$ and if $\rho\ge 3$ we can take $n=1$, yelding that $M_2=4N(f)+2$ and $M_\rho=2N(f)+2$ if $\rho\ge 3$, proving the first assertion of the result.

Finally, the existence of horseshoes for the zero mean vertical drift then follows directly from Proposition~\ref{pr:horseshoezerodrift}
\end{proof}

\subsection{Diffusion for the Non-Twist Standard Family}

We consider the family of homeomorphisms in $\A$ 
having as lifts
$$f_{a,b}(x,y)=\left(x+a\,\left[ 1- ( y-b\,\sin(2\pi\,x))^2\right],  y-b\,\sin(2\pi\,x)\right)$$

which is called in the literature the \emph{Non-Twist Standard Family}. Let us  abuse notation, by calling also
$f_{a,b}$ the maps induced on $\A$. Note that these maps preserve the Lebesgue measure on $\A$ and fail in having the twist condition. This last issue, prevents this family of being treated in a global sense with
those tools largely extended in the twist realm. The idea is to show how the results and techniques
introduced here permits to treat this case.

\smallskip

The aim of this section is to determine for this particular family theoretical bounds on the maximal vertical displacement that a map can have while still having bounded diffusion. Namely, we will provide a bound $M_{a,b}$ so that, if there exists some $z\in \A$ and some integer $n$ for which 
$$\abs{\mathrm{pr}_2(f_{a,b}^n(z)-z)}\ge M_{a,b}$$ then $f_{a,b}$ has unbounded diffusion.
Note that it holds  
$$N(f_{a,b})=\max_{z\in\A}\{\abs{\mathrm{pr}_2(f_{a,b}(z)-z)}\}=|b|$$
We want to  prove the following result. 

\begin{thm}\label{th:boundsfornontwiststandardfamily}
Let $a \in (0,\infty)$ and $b\in\R$ be given, and set 
\begin{equation}
	M_{a,b}=\,\max\left\{  2\,\sqrt{1+\frac{3}{a}}+|b|\,,\,\frac{2}{a\,|b|}\right\}. \label{eq:Mab-ntsf}
\end{equation}
Then,
\begin{enumerate}

\item For all parameters $a \in (0,\infty)$ and $b\in\R\setminus\{0\}$ the map $f_{a,b}$ has a rotational horseshoe.

\item If there exists a point $z\in\A$ with $\mathrm{pr}_2(z)<-M_{a,b}$ and an integer $n$ such that $\mathrm{pr}_2(f^n_{a,b}(z))>M_{a,b}$, then $f_{a,b}$ has total unbounded diffusion.
\end{enumerate}
 \end{thm}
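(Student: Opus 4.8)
The plan is to treat the two points separately, since they rest on rather different ingredients. For point (1), the strategy is to exhibit, for every $a>0$ and $b\neq 0$, a pair of fixed points of $f_{a,b}$ with a nonzero rotational difference, and then invoke the topological criteria already available. Setting $y=0$ in the formula, a point $(x,0)$ is fixed on $\A$ precisely when $\sin(2\pi x)=0$, i.e.\ $x\in\{0,1/2\}$, and there $f_{a,b}(x,0)=(x+a,0)$; choosing lifts appropriately one sees that the two fixed points at $x=0$ and $x=1/2$ both have horizontal rotation number $a \pmod 1$ but they are distinct points of $\A$, so this does not by itself give a rotational difference. Instead I would look for the genuine source of a rotational difference: since the map preserves Lebesgue and is non-wandering, and since the family is analytic, I would appeal to the combination of results cited in the introduction (\cite{lecalveztal,Koromeysam,jager2021onset}) — the remark following the statement of Theorem~\ref{thm:thm3} explicitly says that the argument for (1) works for \emph{any} analytic family of annular maps, so the content is that analyticity plus the non-degeneracy coming from $b\neq 0$ (which makes the map genuinely depend on $x$, hence not a rigid rotation, hence not conjugate to an integrable system) forces either a rotational horseshoe directly, or an invariant circloid whose prime-ends dynamics cannot be everything-rotates-the-same, and in the latter case the Le Calvez--Tal forcing machinery produces the horseshoe. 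The clean way to organize this: if $f_{a,b}$ has \emph{unbounded} diffusion, apply point (2) (or Proposition~\ref{pr:horseshoezerodrift}, since the map has zero vertical drift by oddness of $b\sin(2\pi x)$) to get the horseshoe; if it has \emph{bounded} diffusion, use the analyticity to rule out the map being rigid on its invariant circloids and conclude via the forcing theory.

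For point (2), the plan is a direct quantitative argument in the spirit of the proof of Theorem~\ref{th:rotationsetwithinterior}. Suppose for contradiction that $f_{a,b}$ has bounded diffusion. Then (arguing as in the lemmas above, since the map is non-wandering) there is an invariant essential circloid $\mathcal{C}$, disjoint from its integer vertical translates, and all its points share a single vertical rotation number, which — because $f_{a,b}$ fixes the circles-free region near $y=0$ only in a weak sense — I would pin down using the two quantities in $M_{a,b}$. The term $2\sqrt{1+3/a}$ arises from the ``parabolic trap'' of the non-twist map: the first coordinate increment is $a[1-(y-b\sin 2\pi x)^2]$, which is $\ge a(1-2\sqrt{1+3/a}\cdot\text{something})$... more precisely, I would show that if $\mathcal{C}$ were contained in the strip $|{\rm pr}_2|\le 2\sqrt{1+3/a}+|b|$ then the horizontal drift on $\mathcal{C}$ would be forced to satisfy a lower bound incompatible with the single-rotation-number constraint via a curve-intersection argument (horizontal arc $\sigma$ joining $\mathcal{C}$ to $\mathcal{C}+(0,1)$, then $\sigma\cup f^3_{a,b}(\sigma)$ — or a suitable power — contains an essential compact subset, giving $VD(\mathcal{C})\le 3N(f_{a,b})+O(1)=3|b|+O(1)$, hence the vertical diameter is controlled). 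The second term $2/(a|b|)$ handles the complementary regime: when $|b|$ is small the twist is weak and one needs instead the variation of the horizontal increment across a period, $\partial_y(a(1-(y-b\sin2\pi x)^2)) = -2a(y-b\sin2\pi x)$, to be large enough — of size comparable to $1$ over a vertical distance $1/(a|b|)$-ish — to force a prime-ends rotation number argument through. Then, exactly as in Lemma~\ref{lemadiamcircloid}, bounded diffusion with an invariant circloid of controlled vertical diameter gives $\sup_{n,x}|{\rm pr}_2(f^n(x)-x)|\le VD(\mathcal{C})+1<M_{a,b}$, contradicting the hypothesized orbit going from below $-M_{a,b}$ to above $+M_{a,b}$. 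The asymmetry of the hypothesis (one point genuinely below, one genuinely above) together with zero vertical drift then upgrades ``unbounded diffusion'' to ``total unbounded diffusion'' via Proposition~\ref{pr:horseshoezerodrift}, which also supplies the rotational chaos.

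I expect the main obstacle to be the precise derivation of the two constants in $M_{a,b}$ — in particular verifying that in the small-$|b|$ regime the bound $2/(a|b|)$ is exactly what the prime-ends/Proposition~4.9-of-\cite{paper1} argument yields, since this requires carefully estimating how far a horizontal arc must travel vertically before a fixed power of $f_{a,b}$ wraps it around the annulus, and the non-twist (parabolic) geometry near $y=b\sin 2\pi x$ makes the naive twist estimate degenerate there. A secondary subtlety is making sure the circloid produced in the bounded-diffusion case really does have its rotation number in the range that makes the chosen power of $f_{a,b}$ effective; this is where I would mirror the case split (rotation number $\ge\rho/2$ vs.\ $<\rho/2$) from the proof of Theorem~\ref{th:rotationsetwithinterior}, here with the two fixed points $x=0$ and $x=1/2$ playing the roles of $x_0$ and $x_1$.
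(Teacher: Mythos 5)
Your proposal captures the broad architecture for point (2) — assume bounded diffusion, produce an invariant circloid, bound its vertical extent, split into a near-axis (non-twist) case and a far-from-axis case — and that is indeed how the paper proceeds. But the two mechanisms that actually produce the two constants in $M_{a,b}$ are not identified, and without them the argument does not close. In the regime $L_1>0$ (circloid entirely above $y=0$) the paper does not run a prime-ends/arc-iteration argument as you propose: it observes that the restriction of $f_{a,b}$ to $\mathrm{pr}_2^{-1}((L_1,\infty))$ is a genuine conservative twist map, extends it, and invokes Birkhoff's curve theorem to conclude that the circloid is the projection of a Lipschitz \emph{graph}. The constant $2/(a|b|)$ then comes from the elementary fact that $\tau_a(x,y)=(x+a(1-y^2),y)$ can carry a graph to a graph only if the horizontal rotation numbers it induces across the vertical range of the graph differ by less than $2$, i.e.\ $a(L_2^2-L_1^2)<2$, combined with the lower bound $L_2-L_1\ge|b|$. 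Your proposal tries to reach this bound from a "variation of the horizontal increment over a vertical distance $1/(a|b|)$" heuristic, which does not reduce to a usable inequality. In the non-twist regime $L_1<0<L_2$, the paper does use Proposition~4.9 of \cite{paper1} (so your instinct there is right), but the concrete input is the explicit one-parameter family of fixed points $(0,\pm\sqrt{1-\kappa/a})$ with rotation number $\kappa\le 0$ on the parabola; one shows that the circloid cannot contain four consecutive such fixed points in its interior strip $\mathrm{pr}_2^{-1}([L_1+|b|,L_2-|b|])$, which yields $L_2-L_1<2\sqrt{1+3/a}+2|b|$. Your proposal gestures at the parabolic geometry but never produces these fixed points, so the $2\sqrt{1+3/a}$ term remains unjustified.

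For point (1) the proposal has a genuine error and a genuine gap. The error: the points $(0,0)$ and $(1/2,0)$ are not fixed points of $f_{a,b}$ on $\A$ — they are mapped to $(a,0)$ and $(1/2+a,0)$, which are fixed only if $a\in\Z$. The zero-rotation fixed points of this family sit at $y=\pm 1$, not at $y=0$, and this matters because the paper needs to verify non-degeneracy of precisely those fixed points. The gap: your plan is "if unbounded diffusion, done; if bounded, rule out rigidity on circloids and conclude by forcing theory," but the second branch is left entirely open — "rule out rigidity" is not a step. The paper's actual argument in the bounded-diffusion branch is quite specific: assume no topological entropy, use Theorem~A of \cite{lecalveztal} to get a continuous rotation-number function on $\A$, take a circloid $\mathcal{C}\subset\rho^{-1}(0)$, apply Theorem~1.4 of \cite{Koromeysam} on the invariant annulus around $\mathcal{C}$ to conclude that either $\mathcal{C}$ carries a degenerate fixed point or every point of $\mathcal{C}$ lies on a saddle or a saddle connection, rule out degeneracy by direct inspection of the Jacobian at $(0,\pm1),(1/2,\pm1)$, and finally invoke Ushiki's theorem (biholomorphisms of $\mathbb{C}^2$ admit no heteroclinic saddle connections) because $f_{a,b}$ extends to a biholomorphism. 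The Ushiki step is the pivot that makes the argument work for any analytic family, and it is absent from the proposal.
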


Note that because of the symmetry of the expressions with respect to take opposite vectors,
proving the existence of unbounded diffusion implies total unbounded diffusion.
We will provide the proof of the theorem in the remaining part of the section
and work first on the proof of point (2) as the required tools are similar 
to those employed above. Let us assume that $a,b$ are already fixed and start with the following lemmas. 

The first lemma is identical to previous results on the paper and we omit the proof.
\begin{lema}\label{l.circ}
If $f_{a,b}$ has bounded diffusion, then $f_{a,b}$ has an invariant circloid $\mathcal{C}$.
\end{lema}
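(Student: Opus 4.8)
The plan is simply to repeat the argument of the two circloid-existence lemmas established above (the one for the Dehn-twist case and the one for $\textrm{Homeo}_{\,0,nw,\rho}(\A)$), which uses only bounded diffusion and nothing about the particular form of $f_{a,b}$: bounded diffusion traps every orbit in a bounded horizontal strip, this yields a bounded, invariant, essential open annulus, and the two circloids on the boundary of its filling are invariant.

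In detail, assuming $f_{a,b}$ has bounded diffusion I would set $M=\sup_{n\in\Z,\,x\in\A}\abs{\mathrm{pr}_2(f_{a,b}^{\,n}(x)-x)}<\infty$ and form
$$A'=\bigcup_{i\in\Z}f_{a,b}^{\,i}\big(\pi(\R\times(-1,1))\big),$$
which is open, $f_{a,b}$-invariant and essential, and which is contained in $\pi(\R\times(-M-1,M+1))$ by the bound on vertical displacement, hence bounded. Letting $A=\textrm{Fill}(A')$ be the union of $A'$ with the bounded connected components of its complement, $A$ is again a bounded, essential, open, $f_{a,b}$-invariant annulus, so $\A\setminus A$ has exactly two (unbounded) connected components, an upper one and a lower one; since $f_{a,b}\in\textrm{Homeo}_0(\A)$ fixes the two ends of $\A$, each of these components is $f_{a,b}$-invariant, and hence so are the two circloids lying on the boundary of $A$ associated with them, which is precisely the structural fact about circloids already used above (cf.\ \cite{paper1}). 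Taking $\mathcal{C}$ to be either one completes the argument.

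I would also record at this point, for use further on (e.g.\ to invoke that all $f_{a,b}$-invariant circloids have the same rotation number), that under bounded diffusion $f_{a,b}$ is non-wandering: $f_{a,b}$ preserves Lebesgue measure on $\A$ and, for every $N\in\N$, the full orbit of $\pi(\R\times(-N,N))$ is an open, invariant set of finite measure, so Poincaré recurrence forces almost every point there to be recurrent; letting $N\to\infty$ gives the claim. This is the only point at which the earlier reasoning cannot be quoted verbatim, since $f_{a,b}$ does not descend to a homeomorphism of $\T^2$; apart from it there is no genuine obstacle in the proof, which is exactly why the authors state that it is identical to the previous ones and omit it.
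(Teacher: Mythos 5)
Your proof is correct and matches what the paper has in mind: it simply repeats the construction from the first circloid-existence lemma (take the orbit of an essential strip, pass to its filling $A=\textrm{Fill}(A')$, and use that the two circloids on $\partial A$ are invariant), an argument that uses nothing beyond bounded diffusion and so transfers verbatim to $f_{a,b}$. Your supplementary remark that $f_{a,b}$ is non-wandering via Poincar\'e recurrence on the invariant finite-measure strips is a genuine and useful clarification, since $f_{a,b}$ does not descend to a homeomorphism of $\T^2$ and the paper's earlier non-wandering argument (which passed through $\T^2$) does not apply directly.
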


The following lemmas refer to properties of invariant circloids for the maps $f_{a,b}$.
For an invariant circloid $\mathcal{C}$ of $f_{a,b}$ consider the values
$$L_1(\mathcal{C})=\min_{z\in \mathcal{C}}\mathrm{pr}_2(z)\ \ \ ,\ \ \  L_2(\mathcal{C})=\min_{z\in\mathcal{C}}\mathrm{pr}_2(z).$$
In the following, we denote for $i=1,2$ the set $L_i(\mathcal{C})$ by $L_i$ in order to reduce notation.
We have the following lemma.

\begin{lema}\label{l.menorqueb}
$L_2-L_1\ge |b|$.
\end{lema}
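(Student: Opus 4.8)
The plan is to play the maximal vertical displacement of $f_{a,b}$ against the fact that an invariant circloid must be essential, and hence must meet every vertical line $\{x\}\times\R$. The starting observation is the elementary identity
$\mathrm{pr}_2\bigl(f_{a,b}(x,y)-(x,y)\bigr)=-b\sin(2\pi x)$,
which shows that the vertical displacement depends only on the horizontal coordinate and attains the value $|b|$: at $x=3/4$ if $b>0$, and at $x=1/4$ if $b<0$ (the case $b=0$ being vacuous). Thus at the right horizontal coordinate $f_{a,b}$ pushes points vertically by exactly $|b|$, and since $\mathcal{C}$ is invariant, the image of a point of $\mathcal{C}$ at that coordinate is again in $\mathcal{C}$; this will force the vertical extent of $\mathcal{C}$ to be at least $|b|$.

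In order, the steps are as follows. First, record the displacement identity above and fix $x_0\in\T$ with $-b\sin(2\pi x_0)=|b|$. Second, invoke that $\mathcal{C}$, being a circloid, is an essential annular continuum separating the two ends of $\A\cong\T\times\R$; consequently the projection $\mathrm{pr}_1\colon\mathcal{C}\to\T$ is surjective, for otherwise a vertical line $\{x\}\times\R$ disjoint from $\mathcal{C}$ would join the two ends of $\A$, contradicting the separation property. Third, choose $z=(x_0,y_0)\in\mathcal{C}$ supplied by this surjectivity; by invariance $f_{a,b}(z)\in\mathcal{C}$, and $\mathrm{pr}_2(f_{a,b}(z))=y_0+|b|$. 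Since $y_0$ and $y_0+|b|$ are both second coordinates of points of $\mathcal{C}$, they lie in $[L_1,L_2]$, and therefore $L_2-L_1\ge (y_0+|b|)-y_0=|b|$, which is the claim.

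The only step that is not an immediate computation is the surjectivity of $\mathrm{pr}_1|_{\mathcal{C}}$, i.e. that an essential annular continuum meets every vertical line; I expect this to be the main (indeed only) point requiring justification, but it is a standard property of essential annular continua and poses no genuine obstacle.
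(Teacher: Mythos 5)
Your proof is essentially the same as the paper's: both pick a point of $\mathcal{C}$ sitting over the horizontal coordinate where $|{-b\sin(2\pi x)}|$ is maximal, apply $f_{a,b}$, and use invariance of $\mathcal{C}$ to conclude that $[L_1,L_2]$ must contain two values differing by $|b|$. You are in fact a bit more careful than the paper: the paper uses $x=1/4$ and writes the image as $(x',y+|b|)$, which has the sign wrong when $b>0$ (at $x=1/4$ the displacement is $-b$), though the conclusion $L_2-L_1\ge|b|$ is unaffected; you choose $x_0\in\{1/4,3/4\}$ according to the sign of $b$ so the displacement is exactly $+|b|$, and you also spell out why $\mathrm{pr}_1|_{\mathcal{C}}$ is surjective, which the paper leaves implicit.
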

\begin{proof}
There exists $y$ such that the projection of $(1/4,y)$ belongs to $\mathcal{C}$.
 But $f_{a,b}(y)=(x', y+|b|)$ also belongs to $\mathcal{C}$ and so $L_1\le y\le y+|b|\le L_2$.
\end{proof}

The next lemma gives a first restriction about how large $L_2-L_1$ can be.

\begin{lema}\label{l.boundnontwist}
The map $f_{a,b}$ cannot have a pair of fixed points $z_0$ and $z_1$ with rotational difference of 3 and contained in 
$$\textrm{pr}_2^{-1}(\,[L_1+|b|, L_2-|b|]\,).$$
\end{lema}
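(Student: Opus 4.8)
The strategy is to argue by contradiction: suppose $f_{a,b}$ has fixed points $z_0$ and $z_1$ with rotational difference $3$ and both lying in $\mathrm{pr}_2^{-1}([L_1+|b|,L_2-|b|])$, and derive an incompatibility with the fact that the orbit of the curve joining $\mathcal{C}$ and $\mathcal{C}+(0,1)$ (or some suitable translate) cannot escape the invariant annulus bounded by integer translates of $\mathcal{C}$. First I would set up the lift $F=f_{a,b}$ and pick lifts $\tilde z_0,\tilde z_1$ with $F(\tilde z_0)=\tilde z_0$ and $F(\tilde z_1)=\tilde z_1+(0,3)$. The key geometric point is that the horizontal dynamics of the NTSF is governed by the factor $1-(y-b\sin(2\pi x))^2$: a fixed point must have its $x$-displacement vanish, so at a fixed point one has $y - b\sin(2\pi x) = \pm 1$; that pins down the second coordinate of $f_{a,b}(z)$, namely $\mathrm{pr}_2(F(z)) = \pm 1$, hence $z_0$ and $z_1$ (being fixed) already have controlled second coordinate.

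Next I would exploit the containment hypothesis. Because $z_0,z_1$ lie in $\mathrm{pr}_2^{-1}([L_1+|b|, L_2-|b|])$, this strip is a compact sub-band of the invariant annulus $A$ between $\mathcal{C}$ and $\mathcal{C}+(0,1)$ that stays strictly away from $\mathcal{C}$ and from $\mathcal{C}+(0,1)$ by a vertical margin of at least $|b| = N(f_{a,b})$ on each side (this is where Lemma~\ref{l.menorqueb} is used to guarantee the strip is nonempty/consistent). Then, following the mechanism behind Lemma~\ref{lemaessentialsubset}, I would produce a short vertical arc $\sigma$ inside $A$ joining the two boundary circloids and argue that the presence of two fixed points with rotational difference $3$ forces the prime ends rotation numbers of the two boundaries of $A$ to differ by at least $3$ — exactly the situation of Lemma~\ref{lemaessentialsubset} with $p\ge 3$, so $\sigma\cup f^2(\sigma)$ contains an essential compact subset of $A$. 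Combining this essential subset with the vertical-margin bound (the iterate $f^2$ moves points by at most $2N(f_{a,b}) = 2|b|$ vertically) yields $VD(\mathcal{C}) < $ a quantity that, together with the requirement that $z_0,z_1$ fit in the central strip of width $VD(\mathcal{C})+1-2|b|$ while both realizing $\mathrm{pr}_2 = \pm 1$ of their images, produces a contradiction; one of the fixed points would have to sit outside the allowed band.

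The main obstacle I anticipate is the bookkeeping tying the ``rotational difference $3$'' hypothesis to the prime ends rotation numbers of the two boundary circloids of $A$: one must check that $z_0$ sits (say) on or near the lower boundary's dynamics and $z_1$ near the upper one's, so that their rotation vectors $(0,0)$ and $(0,3)$ actually force the prime-end rotation numbers to spread by $\ge 3$ rather than merely being consistent with some intermediate value. This requires the non-wandering hypothesis (so that all circloids share a common rotation number, via Theorem~C of \cite{paper1}) and a careful choice of which translate of $\mathcal{C}$ to use as a boundary, analogous to the case analysis in the proof of Theorem~\ref{th:rotationsetwithinterior}. Once that is in place, the rest is the now-standard "arc plus iterate gives an essential compact set, which traps the fixed points, contradicting the margin" argument used repeatedly above.
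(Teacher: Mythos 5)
Your proposal does not match the paper's proof, and I think it would not go through as written; there are several concrete problems.

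First, the geometry is misidentified. The strip $\textrm{pr}_2^{-1}([L_1+|b|, L_2-|b|])$ sits \emph{inside} the vertical extent of the circloid $\mathcal{C}$ itself, since $L_1,L_2$ are the min and max of $\textrm{pr}_2$ on $\mathcal{C}$. The fixed points $z_0, z_1$ are therefore not ``between $\mathcal{C}$ and $\mathcal{C}+(0,1)$''; they are trapped within the band occupied by $\mathcal{C}$ and typically lie in bounded complementary components of the circloid. Consequently there is no natural invariant annulus $A$ bounded by two translates of $\mathcal{C}$ that contains both of them, and Lemma~\ref{lemaessentialsubset} is not the tool that applies. Moreover, since $f_{a,b}$ is homotopic to the identity, the prime-end rotation numbers of $\mathcal{C}$ and $\mathcal{C}+(0,1)$ coincide (the vertical translation commutes with the dynamics), so the premise ``two fixed points with rotational difference $3$ force the prime-end rotation numbers of the two boundaries of $A$ to spread by $\ge 3$'' is false: the boundary numbers differ by zero.

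Second, your description of fixed points is incorrect. For $f_{a,b}$ acting on $\A$, a fixed point must have $\sin(2\pi x)=0$ (if $b\neq 0$) and $a[1-y^2]\in\Z$; the first-coordinate displacement only needs to vanish modulo $1$. So the claim that $y-b\sin(2\pi x)=\pm 1$ and $\mathrm{pr}_2(F(z))=\pm 1$ at a fixed point is wrong: $y$ can equal $\pm\sqrt{1-\kappa/a}$ for any admissible integer $\kappa$, and it is precisely this flexibility (with different $\kappa$'s giving different rotation numbers) that produces a pair with rotational difference $3$.

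The paper's argument is much more local and does not pass through $VD(\mathcal{C})$ at all. For each fixed point $z_i$, one draws a possibly degenerate \emph{horizontal} arc $\sigma_i$ from $z_i$ to the nearest point $p_i$ of $\mathcal{C}$ at the same height. Because $z_i$ has height in $[L_1+|b|, L_2-|b|]$ and $N(f_{a,b})=|b|$, the image $f_{a,b}(\sigma_i)$ stays in $\textrm{pr}_2^{-1}([L_1,L_2])$; being disjoint from $\mathcal{C}$ away from the two boundary endpoints, $\sigma_i\cup f_{a,b}(\sigma_i)$ is inessential. Then Proposition~4.9 of \cite{paper1} applies and constrains a prime-end rotation number of $\mathcal{C}$ to lie within distance $1$ of $\rho(f_{a,b},z_i)$; since by Theorem~C of \cite{paper1} the rotation number on $\mathcal{C}$ is a single value $\rho$, one gets $|\rho-\rho(z_0)|\le 1$ and $|\rho-\rho(z_1)|\le 1$, which contradicts $|\rho(z_1)-\rho(z_0)|\ge 3$ by the triangle inequality. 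Your plan never invokes Proposition~4.9, which is the key step, and it aims instead at a global bound on $VD(\mathcal{C})$ via Lemma~\ref{lemaessentialsubset}; that is the mechanism of the other theorems in the paper, not of this lemma.
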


The proof is very similar to what has been done above.

\begin{proof}
Assume for a contradiction this is false. Since $f_{a,b}$ is conservative and $\mathcal{C}$
is a circloid, as consequence of Theorem C of \cite{paper1} shows
that the rotation set on $\mathcal{C}$ is reduces to a singleton for $f_{a,b}$ (recall that we name the lift and the
induced maps by the same name).
In the assumed situation one can join $z_0$ to $\mathcal{C}$ by a possibly degenerate horizontal line segment  
$\sigma_0$ intersecting $\mathcal{C}$ only at one endpoint which we denote $p_0$. If this line segment is degenerate, then $z_0=p_0$ belongs to $\mathcal{C}$ and $\rho=\rho(f_{a,b}, z_0)$ is the rotation number of $\mathcal{C}$. If not,
then $\sigma_0\setminus\{p_0\}$ is disjoint from $\mathcal{C}$. Since $N(f)=|b|$ and $p_0$ is a fix point, then image of $\sigma_0$ is contained in $\textrm{pr}_2^{-1}(\,[L_1, L_2]\,)$. Since it is disjoint from $\mathcal{C}$, this means that $\sigma_0\cup f_{a,b}(\sigma_0)$ is inessential. In this situation we can apply  Proposition 4.9 of \cite{paper1} and obtain that one of the priem-end rotation numbers of $\mathcal{C}$ lies in $[\rho( f_{a,b}, z_0)-1, \rho( f_{a,b}, z_0)+1]$ and so $\abs{\rho-\rho( f_{a,b}, z_0)}\le 1$.
An analogous argument with $z_1$ instead $z_0$ shows that  $\abs{\rho-\rho(f_{a,b}, z_1)}\le 1$.
This way we arrive to a contradiction, since $\rho(f_{a,b}, z_1)-\rho(f_{a,b}, z_0)\ge 3$, and this implies that $\mathcal{C}$ has a non-trivial interval as rotation set (\cite{luishernandez}).
\end{proof}

Our next lemma takes care of those invariant circloids contained in either $\textrm{pr}_2^{-1}(x,y)>0$ or 
$\textrm{pr}_2^{-1}(x,y)<0$. In such situation, we have a twist map and then, due to Birkhoff classical results \cite{birk3},
 the circloid must be given by the projection of a graph in $\R^2$.

\begin{lema}\label{l.boundgraph}
If $L_1>0$, then $L_2-L_1< \frac{2}{a\,(L_1+L_2)}$ and if $L_2<0$ then $L_2-L_1< \frac{2}{a\,(|L_2|+|L_1|)}$.
\end{lema}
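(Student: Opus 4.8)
The plan is to exploit the fact that, when the invariant circloid $\mathcal{C}$ lies entirely in $\{\mathrm{pr}_2>0\}$ (the case $L_1>0$; the case $L_2<0$ is symmetric), the restriction of $f_{a,b}$ to this region is a monotone twist map, because the horizontal displacement $a[1-(y-b\sin(2\pi x))^2]$ is differentiable in $y$ with derivative $-2a(y-b\sin(2\pi x))$, which is strictly negative on a neighborhood of $\mathcal{C}$ as soon as $y-b\sin(2\pi x)>0$ there; note that on $\mathcal{C}$ one has $y\ge L_1>0$ but $y-b\sin(2\pi x)$ could in principle dip to $L_1-|b|$, so one must first observe that the twist is strict on the relevant band $[L_1,L_2]$ after using Lemma~\ref{l.menorqueb}, or more carefully argue the twist persists on the graph itself. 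By Birkhoff's classical theorem \cite{birk3}, an invariant circloid of a monotone twist homeomorphism is the graph of a continuous function $\psi:\T^1\to\R$, so $\mathcal{C}=\{(x,\psi(x)):x\in\T^1\}$ with $L_1=\min\psi$ and $L_2=\max\psi$.

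The key quantitative step is then a flux/area estimate. First I would use area preservation: the closed region between $\mathcal{C}$ and $\mathcal{C}+(0,1)$, or rather the computation of how much horizontal shear the twist produces, links $L_2-L_1$ to the width of the horizontal-displacement interval. Concretely, pick $x_{\min}, x_{\max}\in\T^1$ with $\psi(x_{\min})=L_1$ and $\psi(x_{\max})=L_2$. The points $p=(x_{\min},L_1)$ and $q=(x_{\max},L_2)$ lie on $\mathcal{C}$, which is $f_{a,b}$-invariant and has a single rotation number $\rho$ on it; hence the horizontal displacements $\mathrm{pr}_1(f_{a,b}(p)-p)$ and $\mathrm{pr}_1(f_{a,b}(q)-q)$ differ by an integer from each other only through the rotation number, so in fact, comparing along the graph, the horizontal displacement function $x\mapsto a[1-(\psi(x)-b\sin(2\pi x))^2]$ must be constant modulo the obvious monotonicity obstruction — more precisely, since $\mathcal{C}$ is a graph invariant under a twist map, Birkhoff's theory gives that this graph is Lipschitz and the induced circle map has a well-defined rotation number, and the displacement cannot vary too wildly. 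The cleanest route: the vertical diameter $L_2-L_1$ forces, via the strict twist constant (which is at least $2a\cdot\min(\psi-b\sin)\ge 2a L_1$ on the lower edge and similarly $\le 2aL_2$), that the horizontal displacements at height $\approx L_1$ and at height $\approx L_2$ differ by at least $2a L_1(L_2-L_1)$-ish; but they must agree modulo $1$ (same rotation number, graph), and since $f_{a,b}$ preserves area the total variation of the displacement along the invariant graph is controlled — yielding $2aL_1(L_2-L_1)<1$ roughly, which after symmetrizing $L_1$ with $L_2$ upgrades to $a(L_1+L_2)(L_2-L_1)<2$, i.e. $L_2-L_1<\tfrac{2}{a(L_1+L_2)}$.

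Let me be more honest about the mechanism I expect the authors use, since the ``$L_1+L_2$'' symmetrization is the telltale sign. For a twist graph $\mathcal{C}=\mathrm{graph}(\psi)$ invariant under $f_{a,b}$, every point of $\mathcal{C}$ has the same rotation number $\rho$, which here equals the horizontal displacement ``on average''; but more is true — along an invariant graph of a twist map, the orbit of each point stays on the graph, and the map $x\mapsto x+a[1-(\psi(x)-b\sin(2\pi x))^2]$ is an orientation-preserving circle homeomorphism conjugate to nothing in particular, yet its displacement, evaluated at the two extreme points $x_{\min}$ and $x_{\max}$, gives two values of $u:=\psi(x)-b\sin(2\pi x)$, and the quantity $a(1-u^2)$ evaluated at $u_{\min}\le L_1$ and at $u_{\max}$ differs. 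The estimate $|a(1-u_1^2)-a(1-u_2^2)|=a|u_1-u_2||u_1+u_2|$ with $|u_1-u_2|$ comparable to $L_2-L_1$ (up to the $|b|$ error, already absorbed elsewhere into the $2\sqrt{1+3/a}+|b|$ term — this is why $|b|$ appears additively there and not here) and $|u_1+u_2|$ comparable to $L_1+L_2$ is exactly $a(L_2-L_1)(L_1+L_2)$ up to constants; and this difference is bounded by $1$ (or by $2$ after accounting for the $\T^1$ periodicity and the two-sided comparison), because the displacement of a circle homeomorphism arising as an invariant twist graph cannot wind by more than one full turn relative to itself. Combining gives $a(L_2-L_1)(L_1+L_2)<2$.

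The main obstacle I foresee is making the ``displacement differs by less than a full turn'' step rigorous: one must relate the variation of the horizontal displacement along the invariant graph to the winding/rotation number in a way that produces the sharp constant $2$ rather than a weaker constant, and one must carefully track that the relevant variable is $u=y-b\sin(2\pi x)$ (the argument of the shear), not $y$ itself, and argue that replacing $u$-extremes by $\psi$-extremes costs only the additive $|b|$ already present in the other branch of the $\max$ in \eqref{eq:Mab-ntsf}. Secondarily, one must confirm the twist is strict on the band containing $\mathcal{C}$: since $L_1>0$ and Lemma~\ref{l.menorqueb} gives $L_2-L_1\ge|b|$, a short argument shows $u=y-b\sin(2\pi x)>0$ throughout a neighborhood of the graph is \emph{not} automatic, so one instead applies Birkhoff's theorem on the sub-annulus $\{\mathrm{pr}_2>|b|\}$-ish where strictness is clear and then propagates the graph property down, or simply invokes that Birkhoff's theorem needs only the twist condition on the invariant region, which the hypothesis $L_1>0$ and the explicit formula can be massaged to give.
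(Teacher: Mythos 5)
Your overall strategy is the same as the paper's: invoke Birkhoff's curve theorem to show the invariant circloid is a graph, then argue that the difference of horizontal shear amounts across the vertical extent $[L_1,L_2]$ of the graph is bounded by $2$, which yields $a(L_2^2-L_1^2)<2$ and hence the stated inequality. But there is a genuine gap in how you try to make the shear-difference bound rigorous, and it is exactly the place where the paper's clean mechanism is missing from your proposal.

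You try to work with the variable $u=y-b\sin(2\pi x)$ directly and worry that the extremes of $u$ along the graph differ from $L_1,L_2$ by up to $|b|$, then speculate that this error is ``absorbed'' into the other branch of the $\max$ in \eqref{eq:Mab-ntsf}. That speculation is wrong — that branch belongs to Lemma~\ref{l.nontwistregion}, the $L_1<0<L_2$ case, and has nothing to do with this lemma. The paper avoids the $|b|$ problem entirely by factoring $f_{a,b}=\tau_a\circ\eta_b$ with $\tau_a(x,y)=(x+a[1-y^2],\,y)$ and $\eta_b(x,y)=(x,\,y-b\sin(2\pi x))$, and observing two things: first, $\eta_b$ sends the invariant graph $\tilde\theta$ to another graph $\eta_b(\tilde\theta)$; second, because $\tau_a$ preserves horizontal lines and $\tau_a(\eta_b(\tilde\theta))=f_{a,b}(\tilde\theta)=\tilde\theta$, the intermediate graph $\eta_b(\tilde\theta)$ has \emph{exactly} the same vertical range $[L_1,L_2]$ as $\tilde\theta$. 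So the shear $\tau_a$ acts on a graph whose heights run over precisely $[L_1,L_2]$, and the shear amounts at those heights are $a[1-L_1^2]$ and $a[1-L_2^2]$ — no $u$, no $|b|$ corrections anywhere. The quantitative step is then: if $a[1-L_1^2]-a[1-L_2^2]\ge 2$, take the subarc of $\eta_b(\tilde\theta)$ between a lowest point $(x_1,L_1)$ and a highest point $(x_2,L_2)$ with $x_1,x_2\in[0,1)$; applying $\tau_a$ shears its endpoints apart by $\ge 2$ while they started less than $1$ apart horizontally, so the image arc contains a full fundamental domain, which contradicts it being a subarc of the graph $\tilde\theta$ spanning less than a fundamental domain. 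Your ``cannot wind by more than one full turn'' intuition is in this spirit but never made precise, and your first ``flux/area'' attempt is a red herring.

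One thing you flag that is in fact a soft spot in the paper's own writeup: the paper asserts the restriction of $f_{a,b}$ to $\{y>L_1\}$ is a twist map, but strict negativity of $\partial_y\bigl(a[1-(y-b\sin(2\pi x))^2]\bigr)=-2a\,(y-b\sin(2\pi x))$ requires $y>|b|$, not just $y>L_1$. So if $0<L_1\le|b|$ the twist hypothesis needed for Birkhoff's theorem is not literally verified on the whole band. You notice this but wave at a fix; neither you nor the paper resolves it cleanly.
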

\begin{proof}
We first note that, if $L_1>0$, the restriction of $f_{a,b}$ to $\textrm{pr}_2^{-1}(\,(L_1,+\,\infty)\,)$ is a $\mathcal{C}^{\infty}$
conservative twist map, and one can extend it to a map $g_{a,b}:\A\to\A$ that is still a $\mathcal{C}^{\infty}$ conservative 
twist map. Of course $\mathcal{C}$ is also an invariant circloid for $g_{a,b}$. Then, we can apply 
Birkhoff curve Theorem, see \cite{birk3}, which states than in this case $\mathcal{C}$ must be the projection of graph 
a graph of a Lipschitz function $\tilde{\theta}:\R\to\R$ which we still call $\tilde{\theta}$.

Note that map $g_{a,b}$ is a composition of $\tau_{a}\circ \eta_{b}$ with
$$\tau_{a}(x,y)=\left(x+a\,[1-y^2]\,,\,y\right)\ \ \ ,\ \ \ \eta_{b}(x,y)=\left(x\,,\,y-b\,\sin(2\pi\,x)\right)$$
It holds that $\eta_b$ transform any graph in a new graph. Moreover, as $\tau_a$ preserves the
horizontal lines in $\R^2$, it must be the case that the gaphs $\tilde{\theta}$ and $\eta_b(\tilde{\theta})$ 
has exactly the same image (as functions) given by $[L_1,L_2]$. On the other hand, one notes that
for $\tau_a$ transform a graph in $\R\times\R^+$ into a graph, it is required that the collection of rotation numbers
induced by $\tau$ on the (projection of the) horizontal lines met by the graph, have a difference bounded above by $2$:
Otherwise, considering the curve $\tilde{\alpha}$ inside the graph $\tilde{\theta}$ in-between points $(x_1,L_1)$
and $(x_2,L_2)$ with $x_1,x_2\in [0,1)$, one obtains that $g_{a,b}(\tilde{\alpha})$ is given by a curve
which contains a fundamental domain of $\theta$, being this impossible since $\tilde{\alpha}$ does not contain any
fundamental domain. Thus we
have the condition
$$a\,[1-L_1^2]-a\,[1-L_2^2]<2\mbox{, that is }$$
 $$a\,L_2^2-a\,L_1^2 <2\mbox{ so }$$
$$L_2-L_1<\frac{2}{a\,(L_1+L_2)}$$
 The case $L_2<0$ is obtained by the just covered case, due to the symmetry of the map.
\end{proof}

Merging this last lemma with Lemma \ref{l.menorqueb}, we have the following corollary.

\begin{cor}\label{c.grahps}
There is no invariant circloid for $f_{a,b}$ with $L_1>\frac{2}{a\,|b|}$ or $L_2<-\frac{2}{a\,|b|}$.
\end{cor}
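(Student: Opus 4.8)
The plan is to deduce this directly from the two preceding lemmas by a contradiction argument: Lemma~\ref{l.menorqueb} gives the lower bound $L_2-L_1\ge|b|$ for \emph{any} invariant circloid, while Lemma~\ref{l.boundgraph} gives an upper bound for $L_2-L_1$ precisely when the circloid lies strictly on one side of the line $\textrm{pr}_2=0$. The point is that the hypothesis $L_1>\frac{2}{a|b|}$ (resp. $L_2<-\frac{2}{a|b|}$) already forces $L_1>0$ (resp. $L_2<0$), since $a>0$ and $b\ne0$, so we are automatically in the regime covered by Lemma~\ref{l.boundgraph}.

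So suppose first that $f_{a,b}$ admits an invariant circloid $\mathcal{C}$ with $L_1=L_1(\mathcal{C})>\frac{2}{a|b|}$. Then $L_1>0$, hence by Lemma~\ref{l.boundgraph} we have $L_2-L_1<\frac{2}{a(L_1+L_2)}$. Since $L_2\ge L_1>0$ (indeed $L_2-L_1\ge|b|>0$), the denominator satisfies $L_1+L_2>L_1>\frac{2}{a|b|}$, and therefore
$$\frac{2}{a(L_1+L_2)}<\frac{2}{a}\cdot\frac{a|b|}{2}=|b|.$$
Combining this with Lemma~\ref{l.menorqueb} yields $|b|\le L_2-L_1<|b|$, a contradiction. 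The case $L_2=L_2(\mathcal{C})<-\frac{2}{a|b|}$ is handled identically using the symmetric half of Lemma~\ref{l.boundgraph}: here $L_2<0$, so $L_2-L_1<\frac{2}{a(|L_1|+|L_2|)}$, and since $L_1\le L_2<0$ we have $|L_1|\ge|L_2|>\frac{2}{a|b|}$, so $|L_1|+|L_2|>\frac{2}{a|b|}$ and the same one-line estimate again gives $\frac{2}{a(|L_1|+|L_2|)}<|b|$, contradicting Lemma~\ref{l.menorqueb}.

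I do not expect a genuine obstacle in this step; it is a short arithmetic combination of the two lemmas. The only point worth a sentence of care is the observation, noted above, that the hypotheses of the corollary place the circloid strictly above or strictly below $\textrm{pr}_2=0$, which is exactly what makes Birkhoff's graph theorem—and hence Lemma~\ref{l.boundgraph}—applicable; circloids straddling $\textrm{pr}_2=0$ are neither covered by nor relevant to the statement.
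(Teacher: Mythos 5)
Your proposal is correct and matches the paper's approach exactly: the paper states the corollary follows by ``merging'' Lemma~\ref{l.boundgraph} with Lemma~\ref{l.menorqueb}, and your argument is precisely the short arithmetic contradiction that this merger produces. The only extra care you (rightly) add is the observation that the hypothesis already forces the circloid strictly to one side of $\mathrm{pr}_2=0$, so Lemma~\ref{l.boundgraph} is applicable.
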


In contrast with the last two lemmas, the next one takes care of the non-twist region, for which we use
Lemma \ref{l.boundnontwist}.

\begin{lemma}\label{l.nontwistregion}
If $L_1<0$ and $L_2>0$, then $L_2-L_1<2\,\sqrt{1+\frac{3}{a}}+2\,|b|$. 

\end{lemma}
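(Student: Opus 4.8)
The plan is to argue by contradiction: assume $L_2-L_1\ge 2\sqrt{1+3/a}+2|b|$ and produce, inside the strip $\mathrm{pr}_2^{-1}\big([L_1+|b|,\,L_2-|b|]\big)$, a pair of fixed points of $f_{a,b}$ with rotational difference $3$, which is forbidden by Lemma~\ref{l.boundnontwist}. Under this assumption the base interval $J:=[L_1+|b|,\,L_2-|b|]$ is closed and has length $L_2-L_1-2|b|\ge 2\sqrt{1+3/a}$, so everything reduces to the elementary claim that a closed interval of length at least $2\sqrt{1+3/a}$ contains the heights of two fixed points of $f_{a,b}$ whose rotation numbers differ by $3$.

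First I would locate the fixed points from the lift $f_{a,b}(x,y)=(x+a[1-(y-b\sin 2\pi x)^2],\,y-b\sin 2\pi x)$. A point projects to a fixed point of $f_{a,b}$ on $\A$ exactly when $\sin(2\pi x)=0$, i.e. $x\in\{0,\tfrac12\}$, and $a(1-y^2)\in\Z$; such a fixed point at height $y$ has rotation vector $(a(1-y^2),0)$. Hence for every integer $m\le a$ there are fixed points at the heights $\pm\sqrt{1-m/a}$, both with rotation number $m$; equivalently, the concave parabola $\phi(y):=a(1-y^2)$, with maximum at $y=0$, carries each fixed-point height to its rotation number.

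Next I would split according to the position of $J=[c,d]$ relative to $0$. If $0\notin J$, say $J\subset(0,\infty)$, then $\phi$ is strictly decreasing on $J$, so $\phi(J)=[\phi(d),\phi(c)]$ has length $a(d^2-c^2)=a(d-c)(d+c)\ge a(d-c)^2\ge 4a+12$ (using $c\ge0$ and $d-c\ge 2\sqrt{1+3/a}$); in particular $\phi(J)$ contains integers $n$ and $n+3$, both $\le a$, whose $\phi$-preimages $\sqrt{1-n/a}$ and $\sqrt{1-(n+3)/a}$ belong to $[c,d]=J$. The case $J\subset(-\infty,0)$ is identical after the reflection $y\mapsto-y$, which preserves heights and rotation numbers. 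If instead $0\in J$, then $d-c\ge 2\sqrt{1+3/a}$ forces $\max\{|c|,d\}\ge\sqrt{1+3/a}$, so $J$ contains $[0,\sqrt{1+3/a}]$ or $[-\sqrt{1+3/a},0]$; either one contains the heights $1$ and $\sqrt{1+3/a}$, carrying rotation numbers $0$ and $-3$. In all cases we have produced two fixed points of $f_{a,b}$ inside $\mathrm{pr}_2^{-1}(J)$ with rotational difference $3$, contradicting Lemma~\ref{l.boundnontwist}; hence $L_2-L_1<2\sqrt{1+3/a}+2|b|$.

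The argument is essentially a computation, so no deep obstacle is anticipated; the step demanding most attention is the calibration of the constant in the straddling case, where one must use that only one half of $J$ needs to reach the rotation-number-$(-3)$ fixed point at height $\sqrt{1+3/a}$. The remaining bookkeeping — that the integers produced are $\le a$ so that the corresponding fixed points exist, and that their heights lie in $J$ itself rather than merely in $\phi(J)$ — is immediate from $c\ge0$ in the one-sided case and from the explicit heights $1$ and $\sqrt{1+3/a}$ in the straddling case. Only the explicit form of $f_{a,b}$, the existence of the invariant circloid (Lemma~\ref{l.circ}), and Lemma~\ref{l.boundnontwist} enter; no derivative information is needed.
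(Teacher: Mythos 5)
Your proof is correct and proceeds by the same overall strategy as the paper's: identify fixed points of $f_{a,b}$ at $x\in\{0,\tfrac12\}$ via $a(1-y^2)\in\Z$, produce two of them with rotational difference $3$ inside $\mathrm{pr}_2^{-1}([L_1+|b|,L_2-|b|])$, and contradict Lemma~\ref{l.boundnontwist}. What you do differently, and gain from it, is the case split according to whether $J:=[L_1+|b|,L_2-|b|]$ straddles $0$. The paper's one-line argument only considers the four specific fixed points $\pm p_0,\ldots,\pm p_{-3}$ whose heights fill out $[1,\sqrt{1+3/a}]$ and $[-\sqrt{1+3/a},-1]$, and asserts that one of those two blocks must lie in $J$. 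That assertion does hold when $|b|\le 1$ (then $L_1+|b|<1$ and $L_2-|b|>-1$, and the length bound on $J$ forces one of the two inclusions), but it can fail when $|b|>1$: the hypothesis $L_1<0<L_2$ does not prevent $J$ from lying entirely above $\sqrt{1+3/a}$ or entirely below $-\sqrt{1+3/a}$. Your one-sided case closes exactly this gap: by using the monotone bijection $\phi(y)=a(1-y^2)$ from $J$ onto an interval of length $a(d-c)(d+c)\ge a(d-c)^2\ge 4a+12>4$, you obtain four consecutive integer rotation numbers $n,\ldots,n+3$ in $\phi(J)\subset(-\infty,a]$ — all corresponding to genuine fixed points — rather than the fixed block $0,\ldots,-3$. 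The straddling case then coincides with the paper's. One small wording slip: in the $[-\sqrt{1+3/a},0]$ subcase the heights are $-1$ and $-\sqrt{1+3/a}$, not $1$ and $\sqrt{1+3/a}$, but since $\phi$ is even the rotation numbers are still $0$ and $-3$, so nothing is lost. Net effect: your proof is valid for all $b\neq 0$, whereas the paper's as written is only transparent for $|b|\le 1$ (which happens to cover the CAP range $[0,1]^2$, but the lemma is stated without that restriction).
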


\begin{proof}

Computing those points $p_{\kappa}=(x_{\kappa},y_{\kappa})$
whose image under $f_{a,b}$ goes to $(x_{\kappa}+\kappa,y_{\kappa})$ for $\kappa\in \Z$ one finds
the sub-family 
$$\pm p_{\kappa}=\left(0 ,  \pm \sqrt{1-\frac{\kappa}{a}}\right),\ \kappa\in\Z\setminus\N$$
Assume for a contradiction that $L_2-L_1\geq 2\,\sqrt{1+\frac{3}{a}} +2\,|b|$. Hence  
$L_2-L_1-2\,|b|\geq 2\,\sqrt{1+\frac{3}{a}}$. Then,
$$\{ p_0 \,,\,  p_{-1} \,,\,  p_{-2} \,,\, p_{-3} \}\subset  \textrm{pr}_2^{-1}(\,[L_1+|b|, L_2-|b|]\,)\mbox{ or }$$
$$\{ -p_0 \,,\,  -p_{-1} \,,\,  -p_{-2} \,,\, -p_{-3} \}\subset  \textrm{pr}_2^{-1}(\,[L_1+|b|, L_2-|b|]\,)$$
In both cases one finds a contradiction to Lemma \ref{l.boundnontwist}.
\end{proof}

We are ready to prove Theorem 3. As we mentioned, we start by point (2) as the involved techniques which
were developed above fits with the previous kind of results.  

\begin{proof}[Proof of point (2) of Theorem 3]

Assume for a contradiction that there is an invariant circloid for $f$ and consider the values $L_1<L_2$. We discuss the following cases.

\begin{enumerate}

\item $L_1>0$: Corollary \ref{c.grahps} implies that $L_1\leq \frac{2}{a\,|b|}$. Then, considering  
Lemma \ref{l.menorqueb} and with Lemma \ref{l.boundgraph}, one obtains that
$$L_2\leq \frac{2}{a\,|b|} + \frac{2}{a\,|b|}=\frac{4}{a\,|b|}$$
Hence the circloid must be contained in
$$\textrm{pr}_2^{-1}\left(\left[0\,,\,\frac{4}{a\,|b|} \right] \right)$$

\item $L_2<0$: on obtains by symmetry that $L_1\geq -\frac{4}{a\,|b|}$, so
$$\textrm{pr}_2^{-1}\left(\left[-\frac{4}{a\,|b|} \,,\,0\right]\right)$$

\item $L_1<0<L_2$: Lemma \ref{l.nontwistregion} implies that the circloid is contained in 
$$\textrm{pr}_2^{-1}\left(\left[-\left(2\,\sqrt{1+\frac{3}{a}}+2\,|b|\right)\,,\,2\,\sqrt{1+\frac{3}{a}}+2\,|b|\right]\right)$$

\end{enumerate}

Hence, if some point in $\textrm{pr}_2^{-1}(-\infty,-M_{a,b})$ visits  $\textrm{pr}_2^{-1}(M_{a,b},+\infty)$
there is no invariant circloid, and the map presents diffusion. Moreover, due to the symmetry of the map,
$f_{a,b}$ will present total diffusion, and then applying the main result in \cite{lecalveztal}, being the
two ends of $\A$ Birkhoff related, we obtain the existence of a rotational horseshoe. 
\end{proof}

We finish the section with the proof of point (1) of Theorem~\ref{th.entropyNTSF} which is inspired by the proof of Theorem~5.1 of \cite{jager2021onset}.

\begin{proof}[Proof of point (1) of Theorem 3]

Let us assume that for a contradiction that some strictly positive parameters $a$ and $b$ the map $f_{a,b}$ has no topological entropy, and for simplicity we denote $f_{a,b}$ just $f$. Of course, from the considerations of the previous subsection, that the map has bounded diffusion and that the orbit of any point $z$ remains in an horizontal strip of uniformly bounded width. This, and the fact that $f$ preserves area, implies that the non-wandering set of $f$ is the whole annulus $\A$. Furthermore, since the full orbit of every point is pre-compact, and since we assumed for a contradiction that $f$ has no topological horseshoe, we obtain from Theorem~A of \cite{lecalveztal} that the rotation number of every point is well defined and that the function $\rho:\A\to\R$ that, for each point assigns its rotation number is continuous in $\A$. One also sees that, since $a>0$, that each connected component of a level set of $\rho$ must be a compact annular set, and as such it must contain a circloid.

We also note that the only fixed points of $f$ with zero rotation number are the points $\{(0,-1), (0,1), (1/2,-1), (1/2,1)\}$. A simple inspection shows that neither $1$ nor $-1$ are eigenvalues of the differential of $f$ at any of these points, meaning that $f$ has no degenerate fixed points. But $\rho^{-1}(0)$ must contain a circloid $\mathcal{C}$. The connected component  of $\rho^{-1}((-1, 1))$ that contains $\mathcal{C}$ is a bounded invariant open topological annulus, and the restriction of $f$ to this annulus is an area-preserving map. Now, since every point in $\mathcal{C}$ has null rotation number, we can apply Theorem~1.4 from \cite{Koromeysam} which shows that either $\mathcal{C}$ has degenerate fixed points, or every point in $\mathcal{C}$ is either a saddle or contained in an heteroclinic connection between saddles. Since the former cannot be, one deduces that $f$ has saddle connections. But $f$ extends to a biholomorphic map of $\mathbb{C}^2$, and Ushiki's Theorem (see, for instance, \cite{hasselblatt2002handbook}, p.289) says that biholomorphisms have no saddle connections between hyperbolic saddles, a final contradiction. 
\end{proof}

\section{CAP: Chaos for conservative maps}

We will consider the following variations of the Standard Family (S.F.). 
The S.F.~is given by those maps having lifts
$$f_{a,b}(x,y)=(x+a\,y\,,\,y+b\sin(2\pi\,(x+a\,y))),\ a\in\R,$$ 
and the variations we are considering are given by those maps having lifts 
$$f_{\textbf{h},\textbf{v}}(x,y)=(x+\textbf{h}(y)\,,\,y+\textbf{v}\,(\sin(2\pi\,(x+\textbf{h}(y)))),$$
where
\begin{itemize}
\item $h\in C^{0}(\R),\, [-1,1]\subset h([-1,1])$,
\item $v\in C^{0}(\R),\ \int_{[0,1]}v(\sin(2\pi\,x))\,dx=0$,
\item $h$ lifts a map of $\T^1$.
\end{itemize}
It turns out that these maps contain all the possible analytic diffeomorphisms on $\A$ which lift
maps of $\T^2$ having 0 mean
vertical displacement. We write them in this way
so that they can be regarded as variations of elements of the S.F. 
Moreover, depending on
$h$ one can have a twist map (e.g.~$h(y)=y$) or
a non-twist map (e.g.~$h(y)=\sin(2\pi\,y)$) which lifts a torus homeomorphism in the homotopy class
of the identity.

\smallskip

We want to apply Theorem \ref{thm:thm1} to the twist case, and Theorem \ref{thm:thm2} 
to the non-twist case, meaning we need the following information:

\begin{itemize}

\item Twist case: the number $N(f)=\max_{x\in[-1,1]}|v(x)|$ which will be smaller than $2$ 
for all the examples considered below. 
This means that we have to consider $M=6$ in Theorem \ref{thm:thm1}.

\item Non-twist case: $N(f)=\max_{x\in[-1,1]}|v(x)|$ and the rotational difference $\rho$. 
For all the examples considered below, we have $N(f)\leq 2$ and $\rho=2$. This means that 
we have to consider $M_2=10$ in Theorem \ref{thm:thm2}.

\end{itemize}

Thus, for all the examples considered in the following, in case we can find 
a point in the projection of $y=-5$ which reaches the region above $y=5$, 
then we obtain the existence of total unbounded diffusion and a rotational horseshoe. 

\subsection{CAP-setup}
\label{sec:parallel-shooting}In this section we will show a method, which
ensures that there exists a trajectory which starts at $y=-B$ and finishes
above $y=B$. (In the conservative map case we will consider $B=5$.)

A first approach would be to consider a single point at $y=-B$, and use
interval arithmetic to compute bounds on its iterates until it reaches the
domain above $y=B$. Such approach will be adequate when a map does not have
a direction of strong hyperbolic expansion and when the number of required
iterates is small. We will want to use a method which allows us to take many
iterates, so that we do not need to worry about a potential `blowup'. 
For instance, we would like our method to be applicable, say to the case where we have an expansion of order $2$ (accompanied with contraction of order $\frac{1}{2}$ in a
complementary hyperbolic direction), and the number of iterates is of
order $30$. The overall expansion for one hundred iterates of the map would
be $2^{30}.$ In such a case direct interval arithmetic iterations of the map
have no chance of success. (In Section \ref{sec:ntsf} we will in fact require up to $300$ iterates of a map.)

Our method is based on the parallel shooting approach. For fixed chosen
vectors $q_{0},q_{1},v_{0},v_{1}\in\mathbb{R}^{2}$ we will find two numbers
$h_{0},h_{1}$ and a trajectory from $q_{0}+h_{0}v_{0}$ to $q_{1}+h_{1}v_{1}$.
In more detail, we will estasblish a trajectory $\{p_{i}\}_{i=0}^{m}$%
\begin{align}
p_{0} &  =q_{0}+h_{0}v_{0},\nonumber\\
p_{k} &  =f\left(  p_{k-1}\right)  \qquad\text{for }k=1,\ldots
,m,\label{eq:trajectory}\\
p_{m} &  =q_{1}+h_{1}v_{1}.\nonumber
\end{align}
For our application we will position $q_{0}$ on $y=-B$, take $v_{0}=\left(
1,0\right)  $, which ensures that $p_{0}=q_{0}+h_{0}v_{0}$ is on the line $y=-B$ and consider a
sufficiently large $m$ so that we will expect to go above $y=B$ in $m$
iterates of the map. We have a degree of freedom of the choice of $q_0, q_{1},v_{1}$ and $m$, but $q_{1}$ should be chosen close to $f^{m}\left(  q_{0}\right)  $.

The fact that we choose the initial and the final point of the form
$p_{0}=q_{0}+h_{0}v_{0}$ and $p_{m}=q_{1}+h_{1}v_{1}$ might seem artificial at
first glance. The reason for doing so is to reformulate our problem to a setting which ensures
that there can be only one solution for (\ref{eq:trajectory}). This allows us
to cast the problem in the form of a Newton-type method. Namely, for the fixed
$q_{0},q_{1},v_{0},v_{1}\in\mathbb{R}^{2}$ we consider a function
\[
F:\mathbb{R}\times\mathbb{R}\times\underset{m-1}{\underbrace{\mathbb{R}%
^{2}\times\ldots\times\mathbb{R}^{2}}}=\mathbb{R}%
^{2m}\rightarrow\mathbb{R}^{2m}%
\]
defined as%
\begin{equation}
F\left(  h_{0},h_{1},p_{1},\ldots,p_{m-1}\right)  =\left(
\begin{array}
[c]{l}%
f\left(  p_{1}\right)  -p_{2}\\
f(p_{2})-p_{3}\\
\qquad\vdots\\
f\left(  p_{m-2}\right)  -p_{m-1}\\
f\left(  p_{m-1}\right)  -\left(  q_{1}+h_{1}v_{1}\right)  \\
f\left(  q_{0}+h_{0}v_{0}\right)  -p_{1}%
\end{array}
\right)  . \label{eq:F-Krawczyk}
\end{equation}
Establishing that $F\left(  h_{0},h_{1},p_{1},\ldots,p_{m-1}\right)  =0$ is
equivalent to finding the trajectory (\ref{eq:trajectory}).

Validating in interval arithmetic that there exists a solution of $F=0$ can be
performed by means of the Krawczyk method. We first state the theorem and afterwards comment how it applies in our setting.

\begin{thm}
\label{th:krawczyk}\cite{Alfred}(Krawczyk method)
Let $\left[
X\right]  \subset\mathbb{R}^{n}$ be an interval set (i.e. a product of
intervals), let $X_{0}\in\left[  X\right]  $ and let $C\in\mathbb{R}^{n\times
n}$ be a linear isomorphism. Let $F:\mathbb{R}^{n}\rightarrow\mathbb{R}^{n}$
be a $C^{1}$ map and let $\left[  DF\left(  \left[  X\right]  \right)
\right]  \subset\mathbb{R}^{n\times n}$ stand for the interval enclosure of
the map%
\[
\left[  DF\left(  \left[  X\right]  \right)  \right]  :=\left\{  \left(
a_{ij}\right)  _{i,j=1,...,n}:a_{ij}\in\lbrack\inf_{p\in\left[  X\right]
}\frac{\partial F_{i}}{\partial x_{j}}\left(  p\right)  ,\sup_{p\in\left[
X\right]  }\frac{\partial F_{i}}{\partial x_{j}}\left(  p\right)  ]\right\}  .
\]
 Let
\[
K:=X_{0}-CF\left(  X_{0}\right)  +\left(  Id-C\left[  DF\left(  \left[
X\right]  \right)  \right]  \right)  \left(  \left[  X\right]  -X_{0}\right)
.
\]
If $K\subset\mathrm{int}\left[  X\right]  $ then there exists in $\left[
X\right]  $ exactly one solution of $F\left(  x\right)  =0.$ 
\end{thm}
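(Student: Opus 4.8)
The statement is the standard Krawczyk existence--uniqueness criterion (cf.\ \cite{Alfred}), and the plan is to recast it as a contraction fixed-point problem. Set $g(x):=x-CF(x)$; since $C$ is a linear isomorphism, a point $x\in[X]$ satisfies $F(x)=0$ if and only if $g(x)=x$, so it suffices to prove that $g$ has exactly one fixed point in $[X]$. Write $\mathcal{A}:=\mathrm{Id}-C\,[DF([X])]$ for the interval matrix occurring in the definition of $K$, so that $K=(X_0-CF(X_0))+\mathcal{A}\,([X]-X_0)$, and let $|\mathcal{A}|$ be the nonnegative matrix with entries $|\mathcal{A}|_{ij}=\max\{|t|:t\in\mathcal{A}_{ij}\}$.

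First I would verify that $g$ maps $[X]$ into $K$. Fix $x\in[X]$; since $[X]$ is convex, the segment $t\mapsto X_0+t(x-X_0)$ stays in $[X]$ for $t\in[0,1]$, so the integral form of the mean value theorem applied coordinatewise gives $g(x)=g(X_0)+A^{(x)}(x-X_0)$ with $A^{(x)}_{ij}=\int_0^1\partial_j g_i(X_0+t(x-X_0))\,dt$. As $Dg=\mathrm{Id}-C\,DF$ and each partial derivative of $F$ at a point of $[X]$ lies in the corresponding entry of $[DF([X])]$, every such average $A^{(x)}_{ij}$ lies in $\mathcal{A}_{ij}$ (intervals are convex); hence $g(x)\in g(X_0)+\mathcal{A}([X]-X_0)=K$. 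Thus $g([X])\subseteq K\subset\mathrm{int}[X]\subseteq[X]$. Since $K\neq\emptyset$, the inclusion $K\subset\mathrm{int}[X]$ forces $[X]$ to have nonempty interior, so the vector of half-widths $w:=\mathrm{rad}([X])$ is strictly positive.

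The heart of the matter --- and the step I expect to be the main obstacle --- is to extract from the Krawczyk inclusion $K\subset\mathrm{int}[X]$ the contraction estimate $|\mathcal{A}|\,w<w$ (componentwise). The subtlety is that $X_0$ need not be the midpoint of $[X]$, so $K\subset\mathrm{int}[X]$ does not literally say that $\mathcal{A}$ shrinks $[X]$ about its centre. I would argue at the level of radii: $\mathrm{rad}([X]-X_0)=\mathrm{rad}([X])=w$, and for any real intervals $\mathcal I,\mathcal J$ one has $\mathrm{rad}(\mathcal J\cdot\mathcal I)\ge\mathrm{mag}(\mathcal J)\,\mathrm{rad}(\mathcal I)$ --- indeed, if $c$ is an endpoint of $\mathcal J$ with $|c|=\mathrm{mag}(\mathcal J)$, then $c\cdot\inf\mathcal I$ and $c\cdot\sup\mathcal I$ belong to $\mathcal J\cdot\mathcal I$ and are $2\,\mathrm{mag}(\mathcal J)\,\mathrm{rad}(\mathcal I)$ apart. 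Since the radius of a sum of intervals is the sum of the radii, it follows entrywise that $\mathrm{rad}(K)_i=\sum_j\mathrm{rad}\big(\mathcal{A}_{ij}\cdot([X]_j-(X_0)_j)\big)\ge\sum_j|\mathcal{A}|_{ij}\,w_j=(|\mathcal{A}|\,w)_i$. On the other hand, $K\subset\mathrm{int}[X]$ immediately gives $\mathrm{rad}(K)_i<\mathrm{rad}([X])_i=w_i$ for every $i$. Combining the two yields $(|\mathcal{A}|\,w)_i<w_i$ for all $i$.

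It then remains to close the argument with a contraction. Equip $\R^n$ with the weighted supremum norm $\|v\|_w:=\max_i|v_i|/w_i$ and put $q:=\max_i(|\mathcal{A}|\,w)_i/w_i<1$. For $x,y\in[X]$ the same mean value computation gives $g(x)-g(y)=A(x-y)$ for some $A\in\mathcal{A}$ depending on $x,y$, whence $\|g(x)-g(y)\|_w\le\big(\max_i\sum_j|\mathcal{A}|_{ij}w_j/w_i\big)\,\|x-y\|_w=q\,\|x-y\|_w$, using $|A_{ij}|\le|\mathcal{A}|_{ij}$. Hence $g$ is a $q$-contraction of the complete metric space $([X],\|\cdot\|_w)$ into itself, so by the Banach fixed point theorem it has a unique fixed point $x^{\ast}\in[X]$; then $CF(x^{\ast})=0$, and invertibility of $C$ gives $F(x^{\ast})=0$, while uniqueness of the fixed point is exactly uniqueness of the zero of $F$ in $[X]$. (Alternatively, existence may be obtained from Brouwer's theorem applied to $g:[X]\to[X]$, with uniqueness following since $\mathrm{Id}-A$ is invertible for every $A\in\mathcal{A}$ as $\|A\|_w\le q<1$.)
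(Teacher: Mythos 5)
The paper does not prove Theorem~\ref{th:krawczyk}; it is quoted as a known result from \cite{Alfred}, so there is no in-paper argument to compare yours against. That said, your proof is correct and self-contained. The decomposition into (i) $g([X])\subseteq K$ via the integral mean value theorem, (ii) extracting the componentwise contraction estimate $|\mathcal A|w<w$ from $K\subset\mathrm{int}[X]$ by comparing radii, and (iii) closing with Banach's fixed point theorem in the weighted sup norm $\|\cdot\|_w$, is a clean and standard-style route. I checked the two places most likely to hide an error and both are sound: the entrywise inclusion $A^{(x)}_{ij}\in\mathcal A_{ij}$ holds because each intermediate point lies in $[X]$ by convexity and intervals are closed under averaging, and the key inequality $\mathrm{rad}(\mathcal J\cdot\mathcal I)\ge\mathrm{mag}(\mathcal J)\,\mathrm{rad}(\mathcal I)$ is correctly justified by the two products $c\cdot\inf\mathcal I$ and $c\cdot\sup\mathcal I$. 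You are also right to flag that $X_0$ need not be the midpoint of $[X]$; your radius-based argument (using $\mathrm{rad}([X]-X_0)=\mathrm{rad}([X])$) sidesteps this subtlety correctly, whereas many textbook expositions quietly assume $X_0=\mathrm{mid}[X]$. The alternate ending you sketch (Brouwer for existence, plus invertibility of $\mathrm{Id}-A$ for uniqueness since $\|A\|_w\le q<1$) is the more common presentation in the interval-analysis literature and is equally valid; the two arguments need exactly the same ingredients, so the choice is a matter of taste.
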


To apply the Krawczyk method to establish (\ref{eq:trajectory}) we proceed as
follows. We first use standard numerical computations\footnote{This does not
involve interval arithmetic, meaning that all results should be treated as
approximations.} to find $q_{0},q_{1}$ and $m$ such that $q_{0}\in\left\{
y=-B\right\}  $, $q_{1}\approx f^{m}\left(  q_{0}\right)  $ and $q_{1}%
\in\left\{  y>B\right\}  $. This also gives us an approximation of the
trajectory we seek as $\hat{p}_{i}\approx f^{i}\left(  q_{0}\right)  $ for
$i=0,\ldots,m-1$. (We emphasise here that all non-interval numerical computations involve rounding errors, so a trajectory computed in such way will never be fully precise; hence we highlight this writing `$\approx$' instead of `$=$'.) We choose $v_{0}=\left(  1,0\right)  $, which ensures that
$q_{0}+h_{0}v_{0}\in\left\{  y=-B\right\}  $ for every $h_{0}\in\mathbb{R}$,
and select an arbitrary\footnote{Our first choice of $v_{1}$ can be
$v_{1}=\left(  1,0\right)  $ which automatically ensures that if $q_{1}\in \left\{  y>B\right\}$ then also $q_{1}%
+h_{1}v_{1}\in\left\{  y>B\right\}  $ for every $h_{1}\in\mathbb{R}$. It could
be that with such choice our validation is not succesful, in which case we can
try again with $v_{1}=\left(  0,1\right)  $.} vector $v_{1}$. We then choose a
point $\hat{X}\in\mathbb{R}^{2m}$, $\hat{X}:=\left(  0,0,\hat{p}_{0}%
,\ldots,\hat{p}_{m-1}\right)  $ and choose $C\approx(DF(\hat{X}))^{-1}.$ All
this is done using standard (not interval arithmetic) numerics.

We then proceed to the validation by enlarging $\hat{X}$ on all its
coordinates to obtain a box $\left[  X\right]  .$ We choose $X_{0}=\hat{X}$ and
validate that inside of $\left[  X\right]  $ we have the solution of $F=0$ by
using Theorem \ref{th:krawczyk}, performing all computations in interval
arithmetic. We also need to check that the resulting bound on $p_{m}$ is above
$y=B$. We make use of the fact that the bound for $h_{1}$ for which we have
(\ref{eq:trajectory}) is the second coefficient of the box $\left[  X\right]
$, i.e. $\left[  X\right]  _{2}$. We therefore check in interval arithmetic
that $p_m=q_{1}+\left[  X\right]  _{2}v_{1}\in\left\{  y>B\right\}  .$


\subsection{Results} 

Tables \ref{tb:tableconservative1} and \ref{tb:tableconservative2}  show several situations where the procedure described above gave a computer-assisted proof of the existence of diffusion (and therefore also chaos and non-empty rotation set). 
It deals with variations of the Standard Family, and its purpose is also to show the flexibility of the method. 
The tables list the specific maps that were studied as well as the number of iterates needed to find an orbit which goes from the line $y=-5$ above the line $y=5$. 
Table \ref{tb:tableconservative1} deals with twist maps  while Table \ref{tb:tableconservative2} deals with the non-twist cases. See also Figure \ref{fig:conservative}, where the successive iterates have been joined by lines to highlight which point is mapped into which.

\begin{table}
{\scriptsize
\begin{center}
\begin{tabular}{l l l l l} 
\hline 
	&  	$\textbf{v}(x)$	& $\mathbf{c}$	& 	initial point & it. \\
\hline
\hline
1	 	&	$x+c$		& $0$	& 	$(0.2647+2.95\cdot 10^{-14}\cdot [-1, 1],-5)$ & 11 \\
2		& $x\,(1-x)+c$ 	& $0.5$ 	& 	$(0.3695+1.02\cdot 10^{-14}\cdot[-1,1],-5)$ & 15 \\
3	 	&  $\tan(x)	+c$ & $0$	& 	$(0.5266+5.45\cdot 10^{-15}\cdot[-1, 1],-5)$  & 12 \\
4	 	&  $3\,\ln(x+2)+c$ & $[-1.8714651070, -1.8713991910]$	& 	$(0.4387+1.25\cdot10^{-5\phantom{4}}\cdot[-1, 1], -5)$  & 10 \\
5	 	&  $e^x-1+c$ & $[-0.2660893818, -0.2660423737]$	& 	$ (0.3693+7.02\cdot 10^{-6\phantom{4}}\cdot[-1, 1],-5)$  & 12 \\
\hline
\end{tabular}
\end{center}}
\caption{The twist map case with $\textbf{h}(y)=y$. \label{tb:tableconservative1}}
\end{table}

\begin{table}
{\scriptsize
\begin{center}
\begin{tabular}{l l l l l} 
\hline 
	&  	$\textbf{v}(x)$	& $\mathbf{c}$	& 	initial point & it. \\
\hline
\hline
1	 	&	$x+c$		& $0$	& 	$(0.4454+8.22\cdot10^{-15}\cdot[-1,1],-5) $ & 11 \\
2		& $x\,(1-x)+c$ 	& $0.5$ 	& 	$(0.4185+3.04\cdot 10^{-13}\cdot[-1, 1],-5)$ & 14 \\
3	 	&  $\tan(x)	+c$ & $0$	& 	$(0.3332+1.12\cdot 10^{-14}\cdot [-1, 1],-5)$  & 7 \\
4	 	&  $3\,\ln(x+2)+c$ & $[-1.8714651070, -1.8713991910]$	& 	$(0.4614+8.45\cdot 10^{-6\phantom{4}}\cdot[-1,1],-5)$  & 9 \\
5	 	&  $e^x-1+c$ & $[-0.2660893818, -0.2660423737]$	& 	$ (0.3046+1.25\cdot 10^{-5\phantom{4}}\cdot[-1,1],-5)$  & 8\\
\hline
\end{tabular}
\end{center}}
\caption{The non-twist map case with $\textbf{h}(y)=\sin(2\pi\,y)$. \label{tb:tableconservative2}}
\end{table}

\begin{figure}
\begin{center}
\includegraphics[width=6cm]{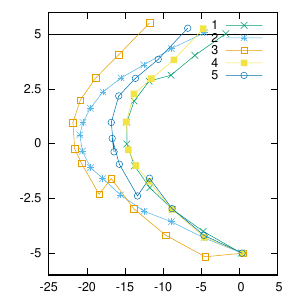}\includegraphics[width=6cm]{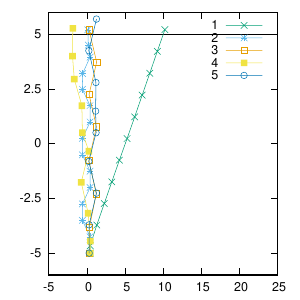}
\end{center}
\caption{The trajectories for the twist maps from Table \ref {tb:tableconservative1} (left), and the trajectories for the non-twist maps from Table \ref{tb:tableconservative2} (right). The plots are for the lift of the map.\label{fig:conservative}}
\end{figure}

The constants $c$ for the maps $\textbf{v}(x)$ needs to be chosen so that 
\begin{equation}
\int_0^1\textbf{v}(\sin(2\pi\,x))\,dx=0.\label{eq:c-choice}
\end{equation}  In some cases such integral is not straightforward to compute. Then we can take $\mathbf{w}(x):=\mathbf{v}(\sin(2\pi\,x))-c$ and choose an interval $\mathbf{c}\subset\mathbb{R}$ so that $-\int_0^1 \textbf{w}(x)dx\in \mathbf{c}$, and validate the existence of the required trajectories for all maps for that interval $\mathbf{c}$. This in particular means that the bounds are valid for the particular value $c$ for which we have (\ref{eq:c-choice}). Obtaining an interval arithmetic bound on an integral of a function $\textbf{w}:[0,1]\to\mathbb{R}$ is straightforward. We can choose $0=x_0<x_1<\ldots<x_k=1$ and compute the following bound in interval arithmetic
\[\int_0^1 \textbf{w}(x)dx \in \sum_{i=1}^k [\textbf{w}([x_{i-1},x_{i}])](x_{i}-x_{i-1}).\]The bound is valid since the interval on the right hand side contains the upper and the lower Riemann sums for the partition $\{x_i\}_{i=0,\ldots,k}$.

In Tables \ref{tb:tableconservative1} and \ref{tb:tableconservative2} we include an initial point for each orbit. Note that it is in a form of an interval. This is because we display here the end result of the validation performed using the parallel shooting approach from section \ref{sec:parallel-shooting}. The method produces an interval enclosure of the validated trajectory. We display only the bound for the first point; the remaining points are validated within an enclosure of similar size. We can see that the enclosure is larger in the case where we have a rough bound for $c\in \mathbf{c}$, which is to be expected.

We have used the above method to provide a proof of Theorem \ref{thm:cap-vsf}. The code for the proof is available in \cite{code}.

\section{CAP: Unbounded diffusion in the Non-Twist Standard Family\label{sec:ntsf}}

The aim of this section is to provide an application of Theorem \ref{thm:thm3} to prove Theorem \ref{thm:cap-ntsf}.

\subsection{CAP-setup}

Applying Theorem \ref{thm:thm3} follows the same method as in Section \ref{sec:parallel-shooting}. Namely, for a chosen parameter pair $(a,b)$ we choose $B>M_{a,b}$, where $M_{a,b}$ is given by (\ref{eq:Mab-ntsf}), and validate the existence of a trajectory which starts at $y=-B$ and goes above $y=B$.

\subsection{Results}
We have chosen a mesh of $1000\times 1000$ uniformly distributed parameter points $(a,b)\in [0,1]^2$ and have checked whether the assumptions of Theorem \ref{thm:thm3} hold for each point. To do so, for each point, we first perform a non-rigorous numerical investigation to find a trajectory from $y=-B$, for $B>M_{a,b}$, which requires the smallest number of iterates needed to go above $y=B$. (The longest trajectory we have allowed for is to have $300$ iterates.)

If for a chosen parameter pair we do not find such trajectory, we consider our validation to have failed and proceed to another parameter pair. If we do find a non-rigorous candidate for a trajectory which goes from $y=-B$ to above $B$, then we validate it by showing that we have $F=0$ for $F$ defined by (\ref{eq:F-Krawczyk}), by means of the Krawczyk method from Theorem \ref{th:krawczyk}. If we fail to validate the trajectory by means of the computer-assisted proof, then we proceed to another parameter pair; if the validation is successful though, then we have placed a red dot for the parameter pair in Figure \ref{fig:ntsf}.

\begin{remark}
In principle, one could consider parameter boxes $A\times B \subset [0,1]^2$ and validate assumptions of Theorem \ref{thm:thm3} on them, instead of considering single parameter pairs $(a,b)$. Such approach is not feasible though from a practical point of view, and we were not able to cover the entire box $[0,1]^2$ to validate the entire area. The problem is that many regions of the parameter domain require large numbers of iterates; we consider up to $300$ iterates. If we choose a parameter box $A\times B$ then on each iterate of the map we accumulate an error, which  leads to a blowup unless the box is tiny; say a product of intervals of length $10^{-6}$. Considering $10^{-6}\times 10^{-6}$ boxes was not feasible for us\footnote{Producing Figure \ref{fig:ntsf} by investigating $1000\times 1000$ mesh points required close to 5 hours of computation on a cluster, on which we performed the taks on $48$ parallel threads. (The computation time was under $5\cdot 48$ hours on a single thread.)}.
\end{remark}

\section{CAP: Chaos in the Dissipative Standard Family}\label{s.dsfcap}

The Disipative Standard Family is given by 
\begin{align*}
    (x,y)\mapsto (x+a\,y\,\,,\,\,b\,y+\sin(2\pi\,(x+a\,y)))
\end{align*}
with $0<b<1$ being the determinant of the Jacobian of the diffeomorphism. 
In our setting, we assume $a>2$ to ensure that there are fixed points with distinct rotation vectors. In this section we provide the needed tools and also outline the proof of Theorem \ref{thm:cap-dsf}.

\subsection{Preliminaries for applying Corollary B}

For this section, the adapted version of Corollary B reads as follow: 
Note that in the following $\left[ r \right]$ refers to the upper integer part of the number $r$, that is, its the usual integer part plus 1.

\begin{corB}
 Assume that for $B>0$ we have that the line $y=B$ is sent bellow it self and the line $y=-B$ is sent above itself under the map above.
 Further, take a pair of points $(x_\kappa,y_\kappa)$ and $(x_{-\kappa},y_{-\kappa})$ such that 
 $(x_\kappa,y_\kappa)$ is sent to $(x_\kappa+\kappa,y_\kappa)$ and $(x_{-\kappa},y_{-\kappa})$ to $(x_{-\kappa}-\kappa,y_{-\kappa})$ for some integer $\kappa\neq 0$.
 Then, if there exists a $\left[\frac{34}{2|\kappa|}\right]$-Disjoint
 Pair of Neighborhoods $U_0,U_1$, for the projection of $(x_\kappa,y_\kappa)$ and $(x_{-\kappa},y_{-\kappa})$ in the annulus, whose backward orbits meet the projection of both lines $y=B$ and $y=-B$, then the induced annular map has a Rotational Horseshoe.
\end{corB}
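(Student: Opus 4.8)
The plan is to show that the hypotheses of this adapted statement literally imply those of the abstract Corollary B from \cite{paper1}, so that the conclusion follows by invoking it directly. First I would set up the dissipative standard map $f_{a,b}$ on $\A$ and verify the ``trapping annulus'' structure. The two curves $\gamma^- = \pi(\R\times\{-B\})$ and $\gamma^+ = \pi(\R\times\{B\})$ are disjoint, essential, and free (being horizontal lines, and free because the hypothesis that $y=B$ maps below itself and $y=-B$ maps above itself means the lines are disjoint from their images). Let $E$ be the closed essential annulus bounded by $\gamma^-$ and $\gamma^+$. The displacement bounds force $f(\gamma^+)$ to lie strictly below $\gamma^+$ and $f(\gamma^-)$ strictly above $\gamma^-$; combined with the fact that the second coordinate of $f$ is a bounded perturbation (through $b\,y + \sin(\cdot)$, and since $0<b<1$ and $a>2$ one checks the vertical component stays controlled on $E$), one gets $f(E)\subset \mathrm{int}(E)$, i.e.\ we may take $n=1$ in Corollary B.

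Next I would identify the two fixed points with a rotational difference. The points $p_{\kappa}=(x_\kappa,y_\kappa)$ and $p_{-\kappa}=(x_{-\kappa},y_{-\kappa})$ descend to fixed points $x_0,x_1$ of the induced annular map: indeed $f_{a,b}$ being homotopic to the identity, a lift sending $p_\kappa\mapsto p_\kappa+(\kappa,0)$ and $p_{-\kappa}\mapsto p_{-\kappa}+(-\kappa,0)$ exhibits fixed points on $\A$ whose rotation vectors differ by $(2\kappa,0)$, so the rotational difference is $\rho=2|\kappa|\in\N^*$. One must also check $p_\kappa,p_{-\kappa}\in E$, i.e.\ that $|y_\kappa|,|y_{-\kappa}|<B$; this should follow because a point whose horizontal coordinate advances by a nonzero integer while its vertical coordinate returns to itself cannot sit on the trapping boundary lines (those map strictly inward), but if needed one can note it is implicitly part of the data. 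Now $[34/\rho] = [34/(2|\kappa|)]$, which is exactly the size of the disjoint pair of neighborhoods $U_0,U_1$ posited in the hypothesis.

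The last ingredient is the ``orbits visiting $U_0$ and $U_1$'' condition, and here lies the one genuine translation step: Corollary B as stated in \cite{paper1} asks that the \emph{forward} orbits of $\gamma^-$ and $\gamma^+$ each visit both $U_0$ and $U_1$, whereas our hypothesis is phrased in terms of \emph{backward} orbits of $U_0,U_1$ meeting the projections of $y=\pm B$. These are equivalent: the backward orbit of $U_i$ meets $\pi(\{y=B\})=\gamma^+$ if and only if some forward iterate of $\gamma^+$ meets $U_i$, and likewise for $\gamma^-$. So the condition ``backward orbits of $U_0,U_1$ meet both lines'' is verbatim the condition ``forward orbits of $\gamma^-$ and $\gamma^+$ visit both $U_0$ and $U_1$.'' With all four hypotheses of Corollary B of \cite{paper1} verified, we conclude that the global attractor in $E$ contains a rotational horseshoe, hence so does the induced annular map.

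I expect the main obstacle to be the bookkeeping around the trapping region: one must make sure that $E$ is genuinely forward-invariant (the vertical component of $f_{a,b}$ being $b\,y+\sin(2\pi(x+ay))$, the worst case on $y=B$ is $bB+1$, so one needs $bB+1<B$, i.e.\ $B>1/(1-b)$, which is a mild constraint the choice of $B$ must respect — in the CAP this is checked numerically), and that the fixed points actually lie in the interior of $E$ rather than on or outside its boundary. Everything else is a direct unwinding of definitions and a citation of Corollary B from \cite{paper1}.
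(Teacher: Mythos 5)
Your proposal is correct and follows essentially the route the paper implicitly takes: the adapted statement is nothing but an instantiation of Corollary B of \cite{paper1}, and the paper in fact gives no separate proof for precisely this reason. Your translations are the right ones: $\gamma^\pm=\pi(\{y=\pm B\})$ are free essential curves; the two hypotheses on the displacement of $y=\pm B$, combined with orientation/end preservation, force $\gamma^-<f(\gamma^-)<f(\gamma^+)<\gamma^+$ and hence $f(E)\subset\mathrm{int}(E)$ with $n=1$; the two fixed points have rotational difference $\rho=2|\kappa|$ for the given lift, so $[34/\rho]=[34/(2|\kappa|)]$; and ``backward orbits of $U_i$ meet $\gamma^\pm$'' is verbatim ``forward orbits of $\gamma^\pm$ visit $U_i$.'' Two small points worth making explicit, both of which you flag as potential issues and both of which are in fact automatic: the containment $f(E)\subset\mathrm{int}(E)$ needs no map-specific estimate beyond the two hypotheses (the order of the boundary images under an orientation- and ends-preserving homeomorphism forces it); and the fixed points lie strictly between $y=\pm B$ because for the DSF one has $y_{\pm\kappa}=\mp\kappa/a$ and $B=1/(1-b)$, so the existence condition $|\kappa(b-1)/a|<1$ is exactly $|y_{\pm\kappa}|<B$.
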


In the following, we set up the necessary tools required to apply this result.
For doing this, we will actually work with the inverse map of the one defined above, which is given by
\begin{align*}
    f_{a,b}(x,y)=\left(x-\frac{a}{b}\left(y-\sin(2\pi\,x)\right),\frac{1}{b}(y-\sin(2\pi\,x))\right).
\end{align*}

\subsubsection{Pair of fixed points}

We look for a fixed point (in the annulus) which rotates with a integer $\kappa$ for the map
$f_{a,b}$, which means that it is a point whose
lift $(x_\kappa,y_\kappa)$ verifies
$$f_{a,b}(x_\kappa,y_\kappa)=(x_\kappa+\kappa,y_\kappa),$$
with $\kappa\in\mathbb N$. 
We can find such a point in the plane in the following way:
\begin{equation}
    \begin{cases}
      x-\frac{a}{b}(y-\sin(2\pi\,x))=x+\kappa\\
      \frac{1}{b}(y-\sin(2\pi\,x))=y
    \end{cases}\
\end{equation}
which can be solved, whenever
\begin{align}\label{cond:fix_point_existence}
    \left | \frac{\kappa}{a}(b-1) \right |<1,
\end{align}
by the following point
\begin{equation}
    \begin{cases}
      x_\kappa= \frac{\arcsin\left(\frac{\kappa}{a}(b-1)\right)}{2\pi}\\
      y_\kappa=\frac{-\kappa}{a}
    \end{cases}.
\end{equation}

In order to apply Corollary B we need to work with pairs of fixed points. 
We will consider pairs of fixed points given by $(x_{\kappa},y_{\kappa})$ and
$(x_{-\kappa},y_{-\kappa})$ which have a rotational difference of $2|\kappa|$.

\begin{remark}
Note that we could take several pairs of fixed points, namely all pairs where $\kappa$ satisfies the condition \eqref{cond:fix_point_existence}.
However, there is no clear favorite pair in order to apply
the corollary: if $|\kappa|$ is increased, the required Disjoint Pair of Neighborhoods (DPN) is less demanding, but the distance from the fixed point $(x_\kappa,y_\kappa)$ to the line $y=-B$ gets larger.
If we decrease $|\kappa|$, we get a more demanding DPN but a smaller distance to the line $y=-B$.
\end{remark}

\subsubsection{Line below and line above}

We need to determine the lines $y=B$ and $y=-B$ mentioned in the corollary. This can be easily done as follows: we need that
$$b\,B+\sin(2\pi\,x)\leq B$$
for which it is enough that

$$b\,B+1\leq B\,,\mbox{ so  }B\geq\frac{1}{1-b}$$
Thus, one can take $B=\frac{1}{1-b}$.

\begin{figure}[h]
    \centering

    \begin{tikzpicture}[scale=1.5]
        \draw (0,3) -- (4,3) node[right] {$y = B$};
        \draw[dotted] plot[domain=0:4, samples=100] (\x,{2.5+0.2*sin(360*\x/4)}) node[right] {};
        \filldraw (1.2,2.1) circle (1pt) node[above] {$(x_{-\kappa}, y_{-\kappa)}$};
    
        \draw[dashed] (0,1.5) -- (4,1.5) node[right] {$y = 0$};
    
        \draw (0,0) -- (4,0) node[right] {$y = -B$};
        \draw[dotted] plot[domain=0:4, samples=100] (\x,{0.3+0.2*sin(360*\x/4)}) node[right] {};
        \filldraw (2.5,0.5) circle (1pt) node[above] {$(x_\kappa$,$y_\kappa)$};
    \end{tikzpicture}    

    \caption{Sketch of the configuration of fixed points as well as of the lines $y=B$ and $y=-B$ together with their forward images (dotted lines).}
    \label{fig:enter-label}
\end{figure}
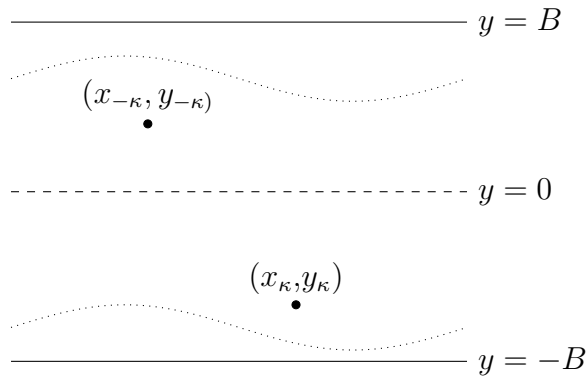

\subsection{Applying Corollary B\label{sec:Cor-B-appl-1}}

In order to successfully apply the statement, first we choose a pair of fixed points
$$(x_{\kappa},y_{\kappa})\mbox{  and  }(x_{-\kappa},y_{-\kappa})$$ with
$\kappa\neq 0$ in $\mathbb{N}$. Thus, following Corollary $B$ we need
 to find a $\left[\frac{34}{2\,\kappa}\right]$-DPN $U_0,U_1$ for the mentioned
pair of fixed points whose backward orbits reach the projection of both lines $y=-B$
and $y=B$. For doing so, it is enough to find four segments
$$S_0^+\,,\,S_1^+\,,\,S_0^-\,,\,S_1^-\subset\mathbb R^2$$
verifying the following:

\begin{enumerate}
\item[(I)] $S_0^+$ starts at $(x_\kappa,y_\kappa)$ and ends at a point $(x^+_0,y^+_0)$ which goes above the line $y=B$ after \textbf{some number} of steps.
First, one needs to find such a point $(x^+_0,y^+_0)$ and then work with the corresponding segment for which the next condition must be fulfilled.

\item[(II)] For $\ell=0,1,\ldots,N=\left[\frac{34}{2\kappa}\right]$ it holds that
$$f_{a,b}^\ell(S_0^+)\subset B^{\kappa}_\ell$$  
where $B^{\kappa}_\ell$ is the box centered at $(x_\kappa+\kappa\,\ell,y_\kappa)$ having width 1 and height 2 (recall that $\kappa$ is positive), see Figure \ref{fig:pos_Bu_Ks}.
\end{enumerate}

\begin{figure}[h]
    \centering

    \begin{tikzpicture}[scale=2]
        \draw[thick] (0,0) rectangle (2,2);
    
        \draw[dotted] (1,0) -- (1,2);
        \draw[dotted] (0,1) -- (2,1);
    
        \node[above right] at (2,2) {$B^{\kappa}_\ell$};
    
        \filldraw (1,1) circle (1pt) node[align=center, above] {$(x_\kappa+\kappa \ell$, $ y_\kappa)$};
    
        \draw[<->] (0,-0.2) -- (2,-0.2) node[midway, below] {width = 1};
        \draw[<->] (-0.2,0) -- (-0.2,2) node[midway, left] {height = 2};
    \end{tikzpicture}

    \caption{Sketch of the boxes $B^{\kappa}_\ell$ for the iterations of $(x_\kappa,y_\kappa)$.}
    \label{fig:pos_Bu_Ks}
\end{figure}

\begin{enumerate}
\item[(III)] $S_1^+$ starts at $(x_{-\kappa}$, $y_{-\kappa})$ and ends at a point $(x^+_1,y^+_1)$ which goes above the line $y=B$ after \textbf{some number} of steps. 
    Again, one needs to find such a point $(x^+_1,y^+_1)$ first and then work with the corresponding segment for which the next condition must be fulfilled.

\item[(IV)] For $\ell=0,1,\ldots,N=\left[\frac{34}{2\kappa}\right]$ it holds that $$f_{a,b}^\ell(S_1^+)\subset B^{-\kappa}_\ell$$ with $B^{-\kappa}_\ell$ the box centered at $(x_{-\kappa}-(\kappa\ell), y_{-\kappa})$ with width $1$ and height $2$, see Figure \ref{fig:pos_Bl_Ks}.
\end{enumerate}

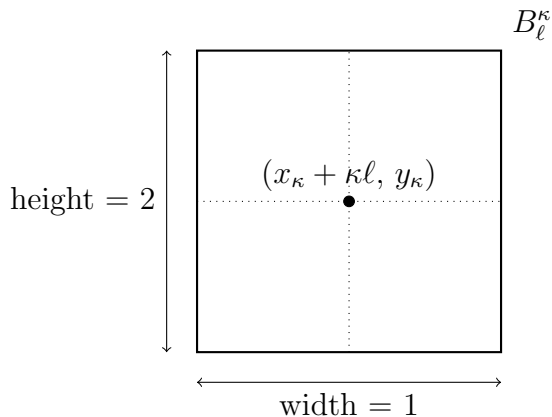
\begin{figure}[h]
    \centering

    \begin{tikzpicture}[scale=2]
        \draw[thick] (0,0) rectangle (2,2);
    
        \draw[dotted] (1,0) -- (1,2);
        \draw[dotted] (0,1) -- (2,1);
    
        \node[above right] at (2,2) {$B^{-\kappa}_\ell$};
    
        \filldraw (1,1) circle (1pt) node[align=center, above] {$(x_{-\kappa}-(\kappa\ell)$, $y_{-\kappa})$};
    
        \draw[<->] (0,-0.2) -- (2,-0.2) node[midway, below] {width = 1};
        \draw[<->] (-0.2,0) -- (-0.2,2) node[midway, left] {height = 2};
    \end{tikzpicture}

    \caption{Sketch of the boxes $B^{-\kappa}_\ell$ for the iterations of $(x_{-\kappa}$, $y_{-\kappa})$.}
    \label{fig:pos_Bl_Ks}
\end{figure}

The next 4 conditions for the segments $S_0^-$ and $S_1^-$, which are needed, are just symmetric to those in (1)-(4) but this time we need to reach the line $y=-B$.

\begin{enumerate}

\item[(V)]$S_0^-$ starts at $(x_\kappa,y_\kappa)$ and finishes at a point $(x^-_0,y^-_0)$ which goes below the line $y=-B$ after \textbf{some number} of steps.
    First, one needs to find such a point $(x^-_0,y^-_0)$ and then work with the corresponding segment for which the next condition must be fulfilled.

\item[(VI)] For $\ell=0,1,\ldots,N=\left[\frac{34}{2\kappa}\right]$ it holds that $$f_{a,b}^\ell(S_0^-)\subset B^{\kappa}_\ell.$$

\item[(VII)] $S_1^-$ starts at $(x_{-\kappa}$, $y_{-\kappa})$ and finishes at a point $(x^-_1,y^-_1)$ which goes below the line $y=-B$ after \textbf{some number} of steps. 
    Again, one needs to find such a point $(x^-_1,y^-_1)$ first and then work with the corresponding segment for which the next condition must be fulfilled.

\item[(VIII)] For $\ell=0,1,\ldots,N=\left[\frac{34}{2\kappa}\right]$ it holds that $$f_{a,b}^\ell(S_1^-)\subset B^{-\kappa}_\ell.$$
\end{enumerate}

\subsection{CAP-setup}

In this section we address the issue of how to validate the conditions (I)-(VIII). The direct
validation of these conditions can be problematic in the setting where the map
$f_{a,b}$ exhibits strong expansion and the number $N$ is large. To address this issue we consider a more careful approach than by direct iteration of sets in interval arithmetic.

Our approach consists of two steps. The first step is to ensure that we can choose our segments arbitrarily close to the fixed points, and that we can reach a prescribed curve by iterating a point from such arbitrarily small segments. The second step is to ensure that points from the chosen curve go above or below the chosen line.

\subsubsection{Choice of segments}
We start by introducing some notation.
We consider two invertible matrices $A_{0},A_{1}\in\mathbb{R}^{2\times2}$ and
for $L,\delta>0$ we will consider segments defined as
\begin{align*}
S_{0}^{\pm}\left(  \delta\right)   &  :=\left\{  \left(  x_{\kappa},y_{\kappa
}\right)  +A_{0}\left(  x,y\right)  :\left\vert y\right\vert \leq L\left\vert
x\right\vert ,\left\vert x\right\vert \leq\delta\right\}  ,\\
S_{1}^{\pm}\left(  \delta\right)   &  :=\left\{  \left(  x_{-\kappa
},y_{-\kappa}\right)  +A_{1}\left(  x,y\right)  :\left\vert y\right\vert \leq
L\left\vert x\right\vert ,\left\vert x\right\vert \leq\delta\right\}  .
\end{align*}
We also define
\[
\tilde{S}_{0}^{\pm}\left(  \delta\right)  :=A_{0}^{-1}\left(  S_{0}^{\pm
}\left(  \delta\right)  -\left(  x_{\kappa},y_{\kappa}\right)  \right)
,\qquad\tilde{S}_{1}^{\pm}\left(  \delta\right)  :=A_{1}^{-1}\left(
S_{1}^{\pm}\left(  \delta\right)  -\left(  x_{-\kappa},y_{-\kappa}\right)
\right)  .
\]
We can think of $\tilde{S}_{0}^{\pm}$ and $\tilde{S}_{1}^{\pm}$ as segments
expressed in the local coordinates given by $A_{0}$ and $A_{1}$ at
$(x_{-\kappa},y_{-\kappa})$ and $(x_{\kappa},y_{\kappa})$, respectively. See
Figure \ref{fig:segments}. (Note that in the local coordinates all the segments will have
the same shape.)

\begin{figure}[ptb]
\begin{center}
\includegraphics[width=12cm]{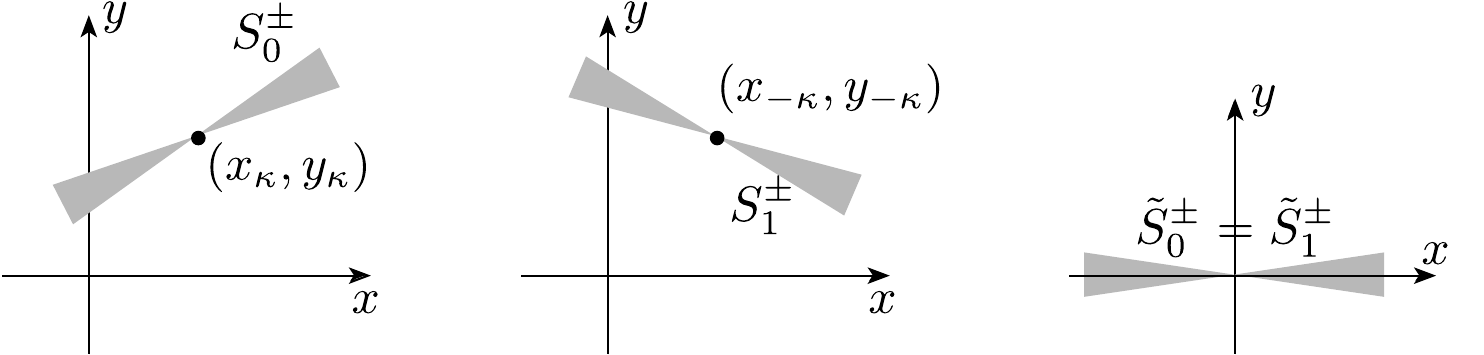}
\end{center}
\caption{In the left and middle plots we have the segments in the original
coordinates, in the right plot we see the segments in the local coordinates at
our fixed points. }%
\label{fig:segments}%
\end{figure}

Our objective will be to be able to choose segments $S_{i}^{\pm}(\delta)$ for
$i=0,1,$ with arbitrarily small $\delta$, making them as close to the fixed
points as we want. At the same time we will want to ensure that for
sufficiently high iterats we can establish that there exist points in these
segments which go above or below a prescribed line.

Let us start with a simple fact.

\begin{lemma}
\label{lem:small-delta}For every $N>0$ there exists a $\delta$ such that
\[
f_{a,b}^{\ell}\left(  S_{0}^{\pm}\left(  \delta\right)  \right)  \subset
B_{\ell}^{\kappa}\qquad\text{and\qquad}f_{a,b}^{\ell}\left(  S_{1}^{\pm
}\left(  \delta\right)  \right)  \subset B_{\ell}^{-\kappa}\qquad\text{for
}\ell=1,\ldots,N.
\]

\end{lemma}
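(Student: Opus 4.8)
The statement to prove is Lemma~\ref{lem:small-delta}: for every $N>0$ there exists $\delta>0$ such that $f_{a,b}^\ell(S_0^\pm(\delta))\subset B_\ell^\kappa$ and $f_{a,b}^\ell(S_1^\pm(\delta))\subset B_\ell^{-\kappa}$ for $\ell=1,\dots,N$. The plan is to argue purely by continuity of the iterates of $f_{a,b}$, exploiting that the centers of the target boxes are exactly the iterates of the fixed points. First I would record the defining facts: $(x_\kappa,y_\kappa)$ is a lift of a fixed point of $f_{a,b}$ on $\A$ which satisfies $f_{a,b}(x_\kappa,y_\kappa)=(x_\kappa+\kappa,y_\kappa)$, hence $f_{a,b}^\ell(x_\kappa,y_\kappa)=(x_\kappa+\kappa\ell,\,y_\kappa)$ for every $\ell\ge 0$; this point is precisely the center of the box $B_\ell^\kappa$, which has width $1$ and height $2$, so its interior is an open neighborhood of the center. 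The analogous statement holds for $(x_{-\kappa},y_{-\kappa})$ and $B_\ell^{-\kappa}$, using $f_{a,b}^\ell(x_{-\kappa},y_{-\kappa})=(x_{-\kappa}-\kappa\ell,\,y_{-\kappa})$.

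The core step is then the following. For each fixed $\ell\in\{1,\dots,N\}$, the map $f_{a,b}^\ell$ is continuous (indeed smooth) on $\R^2$, and it sends the center $(x_\kappa,y_\kappa)$ into the open interior of $B_\ell^\kappa$. By continuity there is an open Euclidean ball $V_\ell$ around $(x_\kappa,y_\kappa)$ with $f_{a,b}^\ell(V_\ell)\subset B_\ell^\kappa$. Taking $V:=\bigcap_{\ell=1}^N V_\ell$, which is still an open neighborhood of $(x_\kappa,y_\kappa)$ (a finite intersection), we get $f_{a,b}^\ell(V)\subset B_\ell^\kappa$ simultaneously for all $\ell=1,\dots,N$. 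It remains to observe that the segments $S_0^\pm(\delta)$ shrink to the point $(x_\kappa,y_\kappa)$ as $\delta\to 0$: indeed every point of $S_0^\pm(\delta)$ has the form $(x_\kappa,y_\kappa)+A_0(x,y)$ with $|x|\le\delta$ and $|y|\le L|x|\le L\delta$, so $S_0^\pm(\delta)$ is contained in the Euclidean ball of radius $\|A_0\|\,\delta\sqrt{1+L^2}$ about $(x_\kappa,y_\kappa)$. Hence one may pick $\delta$ small enough that this ball lies inside $V$, and then $f_{a,b}^\ell(S_0^\pm(\delta))\subset B_\ell^\kappa$ for $\ell=1,\dots,N$. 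One runs the identical argument at $(x_{-\kappa},y_{-\kappa})$ with the matrix $A_1$ and the boxes $B_\ell^{-\kappa}$, obtaining a constant $\delta'$; taking the minimum of the two values yields a single $\delta$ that works for all four families $S_0^\pm,S_1^\pm$.

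There is no serious obstacle here: the lemma is a soft compactness/continuity statement and the only mild care needed is (a) the finite intersection $V=\bigcap_{\ell=1}^N V_\ell$ is what forces $N$ to be fixed in advance — $\delta$ will in general degenerate as $N\to\infty$, but that is acceptable since the statement quantifies $\delta$ after $N$ — and (b) keeping track that the boxes $B_\ell^\kappa$, $B_\ell^{-\kappa}$ are \emph{open} (or at least contain the center in their interior), so that the inclusion $f_{a,b}^\ell(\{\text{center}\})\in \mathrm{int}(B_\ell^{\pm\kappa})$ can be widened to a neighborhood; since these boxes are prescribed with fixed positive width and height and centered exactly at the iterates of the fixed points, this is immediate. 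The same reasoning, incidentally, is what the subsequent computer-assisted verification makes quantitative via interval arithmetic, but for the purposes of this lemma the non-effective continuity argument suffices.
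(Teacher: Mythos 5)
Your proof is correct and takes essentially the same approach as the paper, which simply invokes continuity of $f_{a,b}$; you have spelled out the standard argument (the iterates of the fixed point are the centers of the boxes, the segments shrink to that point as $\delta\to 0$, and a finite intersection of continuity neighborhoods for $\ell=1,\dots,N$ gives the required $\delta$). Nothing to change.
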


\begin{proof}
This follows from the continuity of $f_{a,b}$.
\end{proof}

The above lemma means that the conditions (II), (IV), (VI) and (VIII) will be
automatically satisfied if we choose sufficiently small $\delta$. This on its
own is will not be satisfactory, since we also need to ensure conditions (I),
(III), (V) and (VII), namely that we can go above the line $y=B$ and below the line
$y=-B$ from these segments. This is an issue we will need to address.

First we need some additional notation. Let us introduce the following two
functions $f_{0},f_{1}:\mathbb{R}^{2}\to\mathbb{R}^{2}$, defined as
\begin{align*}
f_{0}\left(  x,y\right)   &  :=A_{0}^{-1}\left(  f_{a,b}\left(  \left(
x_{\kappa},y_{\kappa}\right)  +A_{0}\left(  x,y\right)  \right)  -\left(
x_{\kappa},y_{\kappa}\right)  \right)  ,\\
f_{1}\left(  x,y\right)   &  :=A_{1}^{-1}\left(  f_{a,b}\left(  \left(
x_{-\kappa},y_{-\kappa}\right)  +A_{1}\left(  x,y\right)  \right)  -\left(
x_{-\kappa},y_{-\kappa}\right)  \right)  .
\end{align*}
These maps express the map $f_{a,b}$ in the coordinates given by $A_{0}$
and $A_{1}$ at $(x_{-\kappa},y_{-\kappa})$ and $(x_{\kappa},y_{\kappa})$, respectively.

\begin{figure}[ptb]
\begin{center}
\includegraphics[width=11cm]{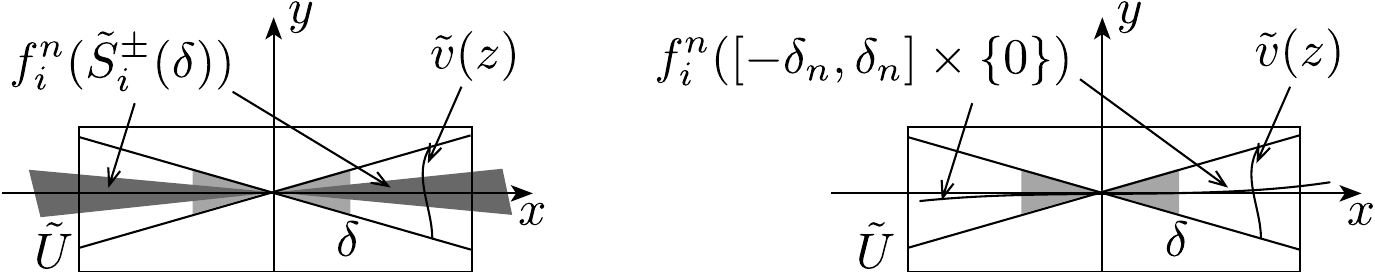}
\end{center}
\caption{An illustration for Lemma \ref{lem:exit-points}. }%
\label{fig:exit-points}%
\end{figure}

The following lemma ensures that, under appropriate conditions for the
derivative of the local map $f_{0}$, if we choose a curve $v$ which links the
two lines $\left\{  y=-Lx\right\}  $ and $\left\{  y=Lx\right\}  $ then under
a sufficiently high iterate of the map we will have $f_{0}^{n}(
\tilde{S}_{0}^{\pm}(\delta))  \cap v\neq\emptyset$, see Figure
\ref{fig:exit-points}, which means that we can reach the curve $\tilde v$ from the set $\tilde{S}_{0}^{\pm}(\delta)$. The lemma ensures also a mirror result for $f_{1}$.
More formally we state it as:

\begin{lemma}
\label{lem:exit-points}Let $\tilde{U}\subset\mathbb{R}^{2}$ be a cartesian
product of two closed intervals $\left[  -\alpha,\alpha\right]  \times\left[
-\beta,\beta\right]  $ with $\beta/\alpha>L$. Let us consider a fixed
$i\in\left\{  0,1\right\}  $. Let $\tilde{v}:\left[  -1,1\right]
\rightarrow\tilde{U}$ be a continuous function satisfying $\tilde{v}%
(-1)\in\left\{  y=-Lx\right\}  $, $\tilde{v}(1)\in\left\{  y=Lx\right\}  $ and
$\left\vert \pi_{x}\tilde{v}\left(  \left[  -1,1\right]  \right)  \right\vert
>0$. If for every $C\in\lbrack Df_{i}(\tilde{U})]$ we have
\begin{equation}
\left\vert y\right\vert \leq L\qquad\implies\qquad\left\vert \pi_{x}C\left(
1,y\right)  \right\vert >1\quad\text{and\quad}\left\vert \pi_{y}C\left(
1,y\right)  \right\vert \leq L,\label{eq:segment-prop-key-assumption}%
\end{equation}
then for every $\delta>0$  there exist a point $p\in\tilde
{S}_{i}^{\pm}\left(  \delta\right)  $ and an integer $n\geq0$ such that
\[
f_{i}^{k}\left(  p\right)  \in\tilde{U}\qquad\text{for }0\leq k\leq n,
\]
and
\[
f_{i}^{n}\left(  p\right)  \in\tilde{v}\left(  \left[  -1,1\right]  \right)  .
\]

\end{lemma}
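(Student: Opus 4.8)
The plan is to push a small cone-shaped arc contained in $\tilde S_i^\pm(\delta)$ forward under $f_i$: the cone condition \eqref{eq:segment-prop-key-assumption} keeps the images inside the horizontal cone $\{|y|\le L|x|\}$ while stretching them in the $x$-direction by a definite factor $>1$ at every step, so after finitely many iterates the image contains an arc running all the way across $\tilde U$, and such an arc is forced to meet $\tilde v$.

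Two reductions come first. Since $[Df_i(\tilde U)]$ is a compact interval set and the two inequalities in \eqref{eq:segment-prop-key-assumption} are open conditions, there is $\lambda>1$ with $|\pi_xC(1,y)|\ge\lambda$ and $|\pi_yC(1,y)|\le L$ for every $C\in[Df_i(\tilde U)]$ and every $|y|\le L$; as $[Df_i(\tilde U)]$ is connected, $\pi_xC(1,y)$ has a fixed sign, and conjugating $f_i$ by $(x,y)\mapsto(-x,y)$ if needed (which fixes $\tilde U$, the cone, $\tilde S_i^\pm(\delta)$ and the hypotheses on $\tilde v$) I may assume $\pi_xC(1,y)\ge\lambda>0$. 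Also, if $0\in\tilde v([-1,1])$ the statement holds with $p=0$ and $n=0$, since $0\in\tilde S_i^\pm(\delta)$ and $f_i(0)=0$; so I assume $0\notin\tilde v([-1,1])$, and then the $x$-coordinates of $\tilde v(-1)$ and $\tilde v(1)$ are nonzero.

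The main input is a cone-transport estimate: using the chain rule together with \eqref{eq:segment-prop-key-assumption}, if $\gamma\subset\tilde U$ is the graph $\{(x,g(x)):x\in I\}$ of a $C^1$ function with $|g'|\le L$, then $f_i\circ\gamma$ is again such a graph, its image under $\pi_x$ is obtained from $I$ by a map with derivative $\ge\lambda$ (so its $x$-length grows by a factor $\ge\lambda$), and any such graph whose $x$-range lies in $[-\alpha,\alpha]$ is automatically contained in $\tilde U$ because there $|g|\le L|x|\le L\alpha<\beta$, using $\beta/\alpha>L$. I apply this to $h_\delta:=[-\delta,\delta]\times\{0\}\subset\tilde S_i^\pm(\delta)$, a graph of this type through the fixed point $0=f_i(0)$. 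Let $\tilde E_n$ be the connected component containing $0$ of the set of points of $h_\delta$ whose first $n$ iterates stay in $\tilde U$; it is a nondegenerate subarc (by continuity, $0$ being interior to $\tilde U$), and $f_i^k(\tilde E_n)$ is a graph of the above type through $0$ for all $k\le n$. Fix $n$ with $\lambda^n\delta>\alpha$: then $f_i^n(h_\delta)$ cannot lie in $\tilde U$, so $\tilde E_n$ is proper in $h_\delta$, and each of its endpoints is pinned by the condition ``$f_i^{k}(\cdot)\in\tilde U$'' at some $k\le n$; a cone-graph in $\tilde U$ can only touch $\partial\tilde U$ on $\{x=\pm\alpha\}$, and the strict $x$-expansion forces the pinning time to be exactly $n$. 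Hence $\Gamma_n:=f_i^n(\tilde E_n)$ is a graph through $0$ whose $x$-range is all of $[-\alpha,\alpha]$, contained in $\tilde U$, and every point of $\tilde E_n$ has its orbit up to time $n$ inside $\tilde U$.

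Finally, $\Gamma_n$ is a graph over $[-\alpha,\alpha]$ joining the two vertical sides of the rectangle $\tilde U$, so it splits $\tilde U$ into closed halves $U^+,U^-$ with $U^+\cap U^-=\Gamma_n$; since $|y|\le L|x|$ on $\Gamma_n$, a point of $\{y=-Lx\}$ lies in $U^-$ when its $x$-coordinate is positive and in $U^+$ when it is negative, and symmetrically a point of $\{y=Lx\}$ lies in $U^+$, resp. $U^-$. In the situation where the lemma is applied, $\tilde v$ crosses the cone on one side of the $y$-axis, so $\tilde v(-1)$ and $\tilde v(1)$ have $x$-coordinates of the same sign and hence lie in opposite halves; the connected set $\tilde v([-1,1])$ then cannot avoid $\Gamma_n$. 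Choosing $q\in\tilde v([-1,1])\cap\Gamma_n$ and a preimage $p\in\tilde E_n\subset\tilde S_i^\pm(\delta)$ with $f_i^n(p)=q$ completes the proof, as $f_i^k(p)\in f_i^k(\tilde E_n)\subset\tilde U$ for $0\le k\le n$. The iteration part is routine once $\lambda$ and the ``cone-graphs stay in $\tilde U$'' observation are in place; the delicate point is the last step — $\tilde v$ is only assumed continuous with endpoints on the two bounding lines, so one must argue with the correct component of $\tilde U\setminus\Gamma_n$ and make sure the endpoints of $\tilde v$ genuinely straddle $\Gamma_n$ (equivalently, that $\tilde v$ separates the apex of the cone from its far end), which is exactly where matching $\Gamma_n$ to the position of $\tilde v(\pm1)$ requires care.
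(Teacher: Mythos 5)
Your proof is correct and follows essentially the same approach as the paper's: compactness of $[Df_i(\tilde U)]$ produces a uniform expansion constant $\lambda>1$, the mean-value estimate yields cone invariance and horizontal stretching for images of Lipschitz-$L$ graphs, and iterating the short horizontal segment through the fixed point produces after finitely many steps an arc that spans $\tilde U$ and must therefore meet $\tilde v$. Where the paper picks a one-sided parameter $\delta_n$ with $f_i^{n-1}(\delta_n,0)\in\tilde U$ but $\pi_x f_i^n(\delta_n,0)>\alpha$ and then asserts the crossing with $\tilde v$ somewhat informally, your construction of the maximal subarc $\tilde E_n$ and the full-width graph $\Gamma_n=f_i^n(\tilde E_n)$, followed by the observation that $\Gamma_n$ separates $\tilde U$ and that $\tilde v(-1)$ and $\tilde v(1)$ land in opposite halves, makes that last intersection step fully explicit. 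The caveat you flag at the end — that $\tilde v(\pm 1)$ must genuinely straddle $\Gamma_n$, which forces $\pi_x\tilde v([-1,1])$ to stay on one side of $0$ — is exactly the implicit assumption the paper makes when it writes ``without loss of generality $\pi_x\tilde v([-1,1])>0$''; so you have correctly identified the one point where the lemma's hypothesis is used in a stronger form than literally stated, and your treatment of it is no less rigorous than the original.
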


\begin{proof}
Without the loss of generality let us assume that $\pi_{x}\tilde{v}\left(  \left[
-1,1\right]  \right)  >0$.  See Figure
\ref{fig:exit-points}.

The $[  Df_{i}(  \tilde{U}) ]  $ and $\left[
-L,L\right]  $ are compact, so there exists a $\lambda>1$ such that if
$\left\vert y\right\vert \leq L$ and  $C\in[  Df_{i}(  \tilde
{U})  ]  $ then
\[
\left\vert \pi_{x}C\left(  1,y\right)  \right\vert >\lambda.
\]

For every $q_{1},q_{2}\in\tilde{U}$
\begin{align*}
f_{i}\left(  q_{1}\right)  -f_{i}\left(  q_{2}\right)   &  =\int_{0}^{1}%
\frac{d}{ds}f_{i}\left(  q_{2}+s\left(  q_{1}-q_{2}\right)  \right)  ds\\
&  =\int_{0}^{1}Df_{i}\left(  q_{2}+s\left(  q_{1}-q_{2}\right)  \right)
ds\,\left(  q_{1}-q_{2}\right)  =C\left(  q_{1}-q_{2}\right)
\end{align*}
for%
\[
C=C\left(  q_{1},q_{2}\right)  =\int_{0}^{1}Df_{i}\left(  q_{2}+s\left(
q_{1}-q_{2}\right)  \right)  ds\in[ Df( \tilde{U}) ] .
\]
This has a number of consequences under our assumption
(\ref{eq:segment-prop-key-assumption}). For two points $q_{1},q_{2}$ such that
$\pi_{x}\left(  q_{1}-q_{2}\right)  \neq0$ and $\left\vert \pi_{y}\left(
q_{1}-q_{2}\right)  \right\vert \leq L\left\vert \pi_{x}\left(  q_{1}%
-q_{2}\right)  \right\vert $ we will have
\begin{align*}
\left\vert \pi_{x}\left(  f_{i}\left(  q_{1}\right)  -f_{i}\left(
q_{2}\right)  \right)  \right\vert  &  =\left\vert \pi_{x}C\left(  q_{1}%
-q_{2}\right)  \right\vert \\
&  =\left\vert \pi_{x}\left(  q_{1}-q_{2}\right)  \right\vert \left\vert
\pi_{x}C\frac{q_{1}-q_{2}}{\left\vert \pi_{x}\left(  q_{1}-q_{2}\right)
\right\vert }\right\vert >\lambda\left\vert \pi_{x}\left(  q_{1}-q_{2}\right)
\right\vert .
\end{align*}
Moreover,
\begin{align*}
\left\vert \pi_{y}\left(  f_{i}\left(  q_{1}\right)  -f_{i}\left(
q_{2}\right)  \right)  \right\vert  &  =\left\vert \pi_{y}C\left(  q_{1}%
-q_{2}\right)  \right\vert \\
&  =\left\vert \pi_{x}\left(  q_{1}-q_{2}\right)  \right\vert \left\vert
\pi_{y}C\frac{q_{1}-q_{2}}{\left\vert \pi_{x}\left(  q_{1}-q_{2}\right)
\right\vert }\right\vert \leq\left\vert \pi_{x}\left(  q_{1}-q_{2}\right)
\right\vert L,
\end{align*}
hence%
\[
\left\vert \pi_{y}\left(  f_{i}\left(  q_{1}\right)  -f_{i}\left(
q_{2}\right)  \right)  \right\vert \leq L\left\vert \pi_{x}\left(
f_{i}\left(  q_{1}\right)  -f_{i}\left(  q_{2}\right)  \right)  \right\vert .
\]
This means that starting from two points $q_{1},q_{2}$ such that
$\pi_{x}\left(  q_{1}-q_{2}\right)  \neq0$ and $\left\vert \pi_{y}\left(
q_{1}-q_{2}\right)  \right\vert \leq L\left\vert \pi_{x}\left(  q_{1}%
-q_{2}\right)  \right\vert $ we can iterate the argument as long as $f_{i}^{m-1}(
q_{1}),$ $f_{i}^{m-1}(  q_{2})  \in\tilde{U}$ to obtain%
\begin{align}
\left\vert \pi_{x}\left(  f_{i}^{m}\left(  q_{1}\right)  -f_{i}^{m}\left(
q_{2}\right)  \right)  \right\vert  &  >\lambda^{m}\left\vert \pi_{x}\left(
q_{1}-q_{2}\right)  \right\vert ,\label{eq:x-expansion}\\
\left\vert \pi_{y}\left(  f_{i}^{m}\left(  q_{1}\right)  -f_{i}^{m}\left(
q_{2}\right)  \right)  \right\vert  &  \leq L\left\vert \pi_{x}\left(
f_{i}^{m}\left(  q_{1}\right)  -f_{i}^{m}\left(  q_{2}\right)  \right)
\right\vert .\label{eq:y-alignment}
\end{align}

By (\ref{eq:x-expansion}) the function $z\mapsto\pi_{x}f_{i}^{m}\left(
z,0\right)  $ is injective. A conclusion which follows from
(\ref{eq:x-expansion}--\ref{eq:y-alignment}) is that for as long as the curve $f_{i}^{m}\left(
\left[  -\delta,\delta\right]  ,0\right)  $ does not exit $U$ it is a graph of
a Lipschitz function, with the Lipschitz constant equal to $L$.

Due to (\ref{eq:x-expansion}) and since $f_{i}\left(  0\right)  =0$, for some
$\delta_{n}\in\lbrack-\delta,\delta]$ and for sufficiently large $n$ we will
have%
\[
\pi_{x}f_{i}^{n}\left(  \delta_{n},0\right)  >\pi_{x}\tilde{U}\qquad
\text{and}\qquad f_{i}^{n-1}\left(  \delta_{n},0\right)  \in\tilde{U}.
\]
This means that the curve $\left[  0,\delta_{n}\right]  \ni z\mapsto\pi
_{x}f_{i}^{n}\left(  z,0\right)  \in\mathbb{R}^{2}$ (or the curve $\left[
\delta_{n},0\right]  \mapsto\pi_{x}f_{i}^{n}\left(  z,0\right)  $ in the case
that $\delta_{n}<0$) will intersect with the curve $\left[  -1,1\right]  \ni
z\mapsto\tilde{v}\left(  z\right)  \in\tilde{U}$; see right plot in Figure
\ref{fig:exit-points}. Since $\left[  -\delta_{n},\delta_{n}\right]
\times\left\{  0\right\}  \subset\tilde{S}_{i}^{\pm}\left(  \delta\right)  $
this means that%
\[
f_{i}^{n}(\tilde{S}_{i}^{\pm}\left(  \delta\right)  )\cap\tilde{v}\left(
\left[  -1,1\right]  \right)  \neq\emptyset,
\]
which concludes our proof.
\end{proof}

\begin{figure}[ptb]
\begin{center}
\includegraphics[width=9cm]{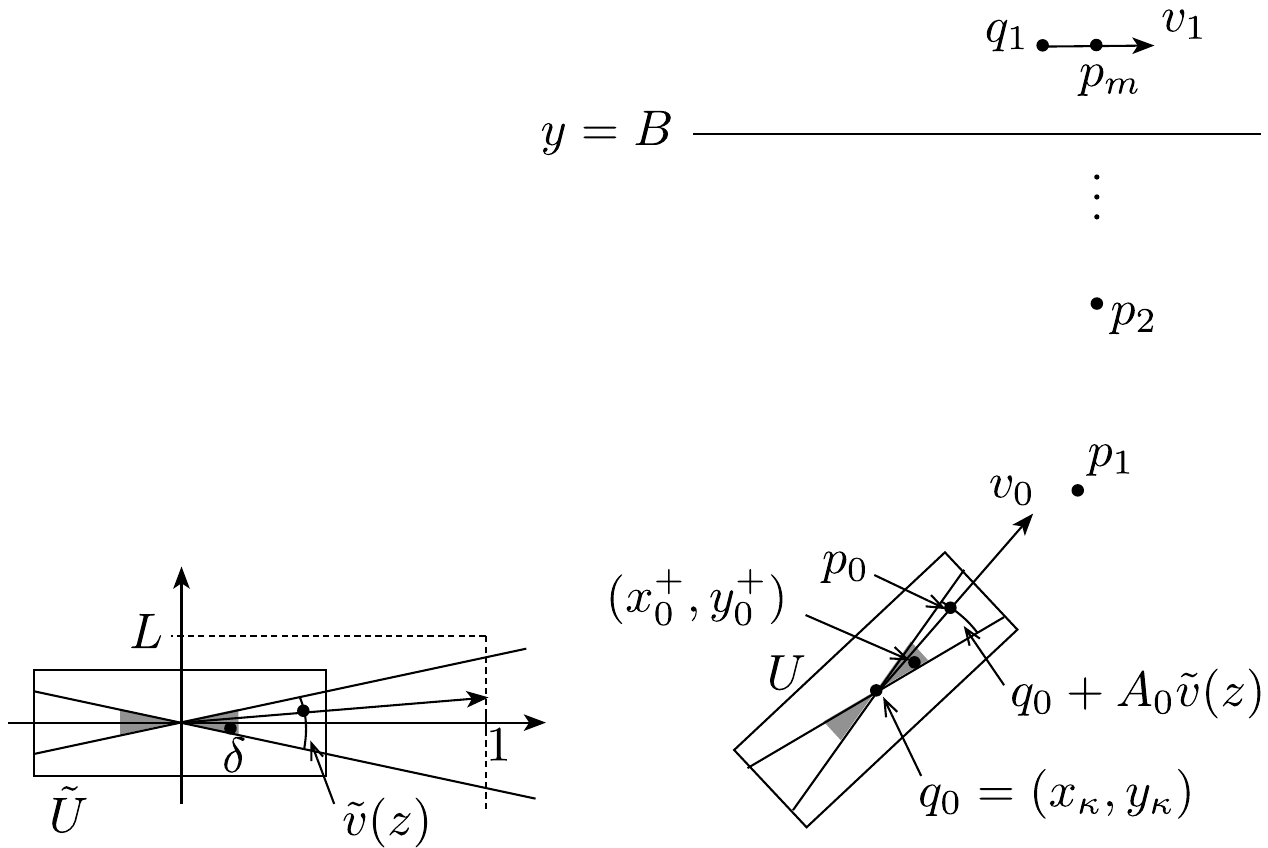}
\end{center}
\caption{Establishing a point which goes up from a segment at $(x_{\kappa
},y_{\kappa})$. }%
\label{fig:going-up}%
\end{figure}
\begin{remark} A good choice of the matrices $A_0$ and $A_1$ to ensure (\ref{eq:segment-prop-key-assumption}) is for their first columns to consist of the unstable eigenvectors of the Jacobian of $f_{a,b}$ at the fixed points $(x_\kappa,y_\kappa)$ and  $(x_{-\kappa},y_{-\kappa})$, respectively, and for their second columns to consist of the stable eigenvectors.
\end{remark}

\subsubsection{Going above and below a line from a segment}
We now show that the parallel shooting method involving the Krawczyk theorem from section \ref{sec:parallel-shooting} can be
combined with Lemmas \ref{lem:small-delta} and \ref{lem:exit-points} to ensire
conditions (I)--(VIII). As in section \ref{sec:parallel-shooting} we will
consider fixed vectors $q_{0},q_{1},v_{0},v_{1}\in\mathbb{R}^{2}$ and
find $h_{0},h_{1}$ and  $\{p_{i}\}_{i=0}^{m}$ such that we obtain a trajectory
satisfying (\ref{eq:trajectory}). We set up our problem similarly to what is
done in section \ref{sec:parallel-shooting}, but incorporate a parameter into
our discussion. We consider
\[
F:\left[  -1,1\right]  \times\mathbb{R}\times\mathbb{R}\times\underset{m-1}{\underbrace{\mathbb{R}%
^{2}\times\ldots\times\mathbb{R}^{2}}}=\left[  -1,1\right]  \times\mathbb{R}%
^{2m}\rightarrow\mathbb{R}^{2m}
\]
defined as%
\begin{equation}
F\left(  z,h_{0},h_{1},p_{1},\ldots,p_{m-1}\right)  =\left(
\begin{array}
[c]{l}%
f\left(  p_{1}\right)  -p_{2}\\
f(p_{2})-p_{3}\\
\qquad\vdots\\
f\left(  p_{m-2}\right)  -p_{m-1}\\
f\left(  p_{m-1}\right)  -\left(  q_{1}+h_{1}v_{1}\right)  \\
f\left(  q_{0}+h_{0}v_{0}\left(  z\right)  \right)  -p_{1}%
\end{array}
\right)  .\label{eq:shooting-parameter}%
\end{equation}
We emphasise that in (\ref{eq:shooting-parameter}) we allow the vector $v_{0}\left(  z\right)  $ to be parameter
dependent, but we keep $v_1$ fixed. We will use the following version of the Krawczyk method for parameter
dependent maps:

\begin{thm}
\label{th:krawczyk-parameter}\cite{Alfred}Let $\left[  X\right]
\subset\mathbb{R}^{n}$ be an interval set, let $\left[  Z\right]
\subset\mathbb{R}$ be a closed interval, let $X_{0}\in\left[  X\right]  $ and
let $C\in\mathbb{R}^{n\times n}$ be a linear isomorphism. Let $F:\left[
Z\right]  \times\mathbb{R}^{n}\rightarrow\mathbb{R}^{n}$ be a $C^{1}$ map and
let $\left[  D_{X}F\left(  \left[  Z\right]  ,\left[  X\right]  \right)
\right]  \subset\mathbb{R}^{n\times n}$ stand for the interval enclosure of
the map%
\begin{align*}
& \left.  \left[  D_{X}F\left(  \left[  Z\right]  ,\left[  X\right]  \right)
\right]  :=\right.  \\
& \left\{  \left(  a_{ij}\right)  _{i,j=1,...,n}:a_{ij}\in\lbrack\inf
_{z\in\left[  Z\right]  ,p\in\left[  X\right]  }\frac{\partial F_{i}}{\partial
x_{j}}\left(  z,p\right)  ,\sup_{z\in\left[  Z\right]  ,p\in\left[  X\right]
}\frac{\partial F_{i}}{\partial x_{j}}\left(  z,p\right)  ]\right\}  .
\end{align*}
Let
\[
K:=X_{0}-CF\left(  X_{0}\right)  +\left(  Id-C\left[  D_{X}F\left(  \left[
Z\right]  ,\left[  X\right]  \right)  \right]  \right)  \left(  \left[
X\right]  -X_{0}\right)  .
\]
If $K\subset\mathrm{int}\left[  X\right]  $ then for every $z\in\left[
Z\right]  $ there exists exactly one $x\left(  z\right)  \in$ $\left[
X\right]  $ for which $F\left(  z,x(z)\right)  =0.$ Moreover, $z\mapsto
x\left(  z\right)  $ is $C^{1}$. 
\end{thm}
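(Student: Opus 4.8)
The plan is to reduce this parameter‑dependent statement to the non‑parametric Krawczyk theorem (Theorem~\ref{th:krawczyk}) applied fiberwise, in $\{z\}\times\R^{n}$ for each $z\in[Z]$, and then to upgrade the resulting family of solutions to a $C^{1}$ map via the implicit function theorem; here $CF(X_{0})$ in the definition of $K$ is read as $C\cdot\{F(z,X_{0}):z\in[Z]\}$, the only reading consistent with $F$ carrying a parameter. First I would fix $z\in[Z]$ and form the fiber Krawczyk operator
$$K_{z}:=X_{0}-CF(z,X_{0})+\bigl(Id-C[D_{x}F(z,\cdot)([X])]\bigr)\bigl([X]-X_{0}\bigr),$$
where $[D_{x}F(z,\cdot)([X])]$ is the interval enclosure of $x\mapsto D_{x}F(z,x)$ over $[X]$. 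Since interval arithmetic is inclusion‑monotone, and $F(z,X_{0})\in\{F(z',X_{0}):z'\in[Z]\}$ and $[D_{x}F(z,\cdot)([X])]\subseteq[D_{X}F([Z],[X])]$, one gets $K_{z}\subseteq K\subseteq\mathrm{int}[X]$. Theorem~\ref{th:krawczyk}, applied to the $C^{1}$ map $F(z,\cdot)\colon\R^{n}\to\R^{n}$, then gives a unique $x(z)\in[X]$ with $F(z,x(z))=0$; moreover $x(z)$ is a fixed point of the Newton operator $x\mapsto x-CF(z,x)$, which the integral mean value identity bounds inside $K_{z}$, so $x(z)\in K_{z}\subseteq K\subseteq\mathrm{int}[X]$, a fact used below.

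The next step is to extract from $K\subseteq\mathrm{int}[X]$ the invertibility of every $A\in M:=[D_{X}F([Z],[X])]$. The standard radius estimate for the Krawczyk operator gives $\abs{Id-CM}\,w\le\mathrm{rad}(K)<w$ componentwise, where $w:=\mathrm{rad}([X])>0$ and $\abs{Id-CM}$ is the magnitude matrix of the interval matrix $Id-CM$; by the Collatz--Wielandt bound for nonnegative matrices this yields $\rho(\abs{Id-CM})<1$. For any $A\in M$ one has $\abs{Id-CA}\le\abs{Id-CM}$ entrywise, hence $\rho(Id-CA)\le\rho(\abs{Id-CA})<1$, so $CA=Id-(Id-CA)$ is invertible by the Neumann series and therefore so is $A$. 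In particular $D_{x}F(z,x(z))\in M$ is invertible for every $z\in[Z]$.

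For the $C^{1}$ dependence I would fix $z_{0}$ in the interior of $[Z]$: since $F$ is $C^{1}$, $F(z_{0},x(z_{0}))=0$, and $D_{x}F(z_{0},x(z_{0}))$ is invertible, the implicit function theorem produces a $C^{1}$ branch $\xi$ near $z_{0}$ with $\xi(z_{0})=x(z_{0})$ and $F(z,\xi(z))=0$. Because $x(z_{0})\in\mathrm{int}[X]$, we have $\xi(z)\in[X]$ for $z$ close to $z_{0}$, so $\xi(z)=x(z)$ there by the fiberwise uniqueness already established; thus $z\mapsto x(z)$ is $C^{1}$ on the interior of $[Z]$, with $x'(z)=-\bigl(D_{x}F(z,x(z))\bigr)^{-1}D_{z}F(z,x(z))$, and since all of the data extend continuously to $[Z]$ so does this derivative, giving $x(\cdot)\in C^{1}([Z])$.

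The only nonroutine point is the second step --- deducing the invertibility of all $A\in M$ from $K\subseteq\mathrm{int}[X]$ --- but this is a classical consequence of the Krawczyk radius estimate rather than anything new. The rest is bookkeeping: verifying by inclusion‑monotonicity of interval arithmetic that each fiber operator $K_{z}$ lies inside the single operator $K$ of the statement, and a standard application of the implicit function theorem together with fiberwise uniqueness. One could instead bypass Theorem~\ref{th:krawczyk} and rerun the Brouwer fixed point and contraction arguments directly in each fiber, but reusing Theorem~\ref{th:krawczyk} keeps the proof short.
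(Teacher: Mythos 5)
The paper does not actually prove this theorem; both Theorem~\ref{th:krawczyk} and Theorem~\ref{th:krawczyk-parameter} are cited to \cite{Alfred} and used as black boxes, so there is no in-paper argument to compare against. Your blind proof is nonetheless correct as a supply of the missing argument, and the three moves you make are exactly the standard ones: (i) inclusion monotonicity of interval arithmetic reduces the parameter-dependent inclusion $K\subset\mathrm{int}[X]$ to the fiber inclusions $K_z\subset\mathrm{int}[X]$, so Theorem~\ref{th:krawczyk} applied to $F(z,\cdot)$ delivers existence and uniqueness of $x(z)\in[X]$ for each $z$; (ii) the radius estimate $\abs{Id-CM}\,\mathrm{rad}([X])\le\mathrm{rad}(K)<\mathrm{rad}([X])$, together with the Collatz--Wielandt characterization of the spectral radius and the inequality $\rho(Id-CA)\le\rho(\abs{Id-CA})\le\rho(\abs{Id-CM})$, gives invertibility of every $A\in[D_XF([Z],[X])]$ and in particular of $D_xF(z,x(z))$; (iii) the implicit function theorem together with the fiberwise uniqueness upgrades the selection $z\mapsto x(z)$ to a $C^1$ map. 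Your reading of $CF(X_0)$ as the interval enclosure of $\{CF(z,X_0):z\in[Z]\}$ is the intended one and is needed for the monotonicity in (i). Two small points deserve a sentence in a polished write-up: the componentwise positivity $\mathrm{rad}([X])>0$ should be observed to follow from $K\subset\mathrm{int}[X]$ (so the Collatz--Wielandt bound is legitimately applicable), and the $C^1$ assertion on the closed interval $[Z]$ rather than its interior requires either extending $F$ to an open neighbourhood before invoking IFT, or noting that $x$ is continuous up to the endpoints (by compactness of $[X]$ and uniqueness) while $x'=-\left(D_xF\right)^{-1}D_zF$ extends continuously, so the one-sided derivatives match by the mean value theorem. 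An alternative, slightly more self-contained route would re-run the Banach fixed-point argument for the Newton operator $N_z(x)=x-CF(z,x)$ directly, which yields a contraction uniformly in $z$ and hence continuity and Lipschitz dependence without IFT; but reusing Theorem~\ref{th:krawczyk} as you do is shorter and equally rigorous.
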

If by means of Theorem \ref{th:krawczyk-parameter} we validate that $F=0$, and if we check that $q_1 + h_1(z) v_1$ lies above $B$ (or below $-B$), then we obtain a trajectory starting from $q_0 + h_0(z) v_0(z)$, which goes above $B$ (or below $-B$), for every $z\in [Z]$.

\subsubsection{The validation procedure\label{sec:validation-dsf}}
We now demonstrate how Theorem \ref{th:krawczyk-parameter} applied to
(\ref{eq:shooting-parameter}) and combinded with Lemmas \ref{lem:small-delta}
and \ref{lem:exit-points} can be used to ensure conditions (I)--(VIII). We do
this on an example how to validate that a point $\left(  x_{0}^{+},y_{0}%
^{+}\right)  $ goes above the line $y=B$; namely we discuss below how to establish (I) and (II). The illustration for the method described below can be found in
Figure \ref{fig:going-up}. 

First, a good choice for us in this setting will be
\[
q_{0}=\left(  x_{\kappa},y_{\kappa}\right)  .
\]
We will also choose $v_{0}(z):=A_{0}\left(  1,zL\right)  $, for $z\in
\lbrack-1,1]$. With the use of Theorem \ref{th:krawczyk-parameter} applied to
(\ref{eq:shooting-parameter}) we establish the existence and bounds for
$h_{0}\left(  z\right)  ,h_{1}(z)\in\mathbb{R}$ and $p_{0}\left(  z\right)
,\ldots,p_{m}\left(  z\right)  $ for which%
\begin{align}
p_{0}\left(  z\right)   &  =q_{0}+h_{0}\left(  z\right)  v_{0}\left(
z\right)  , \notag\\
p_{k}\left(  z\right)   &  =f\left(  p_{k-1}\left(  z\right)  \right)
\qquad\text{for }k=1,\ldots,m, \label{eq:zhooting-z} \\
p_{m}\left(  z\right)   &  =q_{1}+h_{1}(z)v_{1}. \notag
\end{align}
After doing this we need to check that from the obtained bounds it follows that
$p_{m}(z)$ lies above $y=B$. We also need to ensure that for $\tilde
{v}(z):=h_{0}\left(  z\right) \left(  1,zL\right)  $ we have $ \tilde{v}([-1,1])\subset \tilde{U}$, that $|h_0([-1,1])|>0$, and validate assumptions of Lemma \ref{lem:exit-points} on
the set $\tilde{U}$. This ensures that we can reach some point $p_{0}\left(  z\right)
=q_{0}+A_{0}\tilde{v}(z)$ from $S_{0}^{+}\left(  \delta\right)  $ after $n$
iterates. This in particular implies that there exists a point $(x_{0}^{+},y_{0}^{+})$ in the
segment $S_{0}^{+}\left(  \delta\right)  $ whose $n$-th iterate arrives at
$p_{0}\left(  z\right)  =q_{0}+A_{0}\tilde{v}(z)$. We know from (\ref{eq:zhooting-z}) that from $p_{0}\left(
z\right)  $ we can go to $p_m(z)$, which is above the line $y=B$, which
implies the condition (II). It is important to note that Lemma
\ref{lem:exit-points} ensures that we can choose arbitrarily small $\delta$.
Thus, we can go above the line $y=B$ from an arbitrarily small segment
$S_{0}^{\pm}\left(  \delta\right)  $. Lemma \ref{lem:small-delta} ensures (I)
by choosing sufficiently small $\delta$.

Above description gives us a method which we can use to establish that:

\begin{enumerate}
\item[(I)-(II)] There exists a segment $S_{0}^{+}\left(  \delta\right)  $ that
starts at $\left(  x_{\kappa},y_{\kappa}\right)  $ and ends at a point
$(x_{0}^{+},y_{0}^{+})$ which goes above the line $y=B$ after some number of
steps. Moreover, $f_{a,b}\left(  S_{0}^{+}\left(  \delta\right)  \right)
\subset B_{\ell}^{\kappa}$ for $\ell=1,...,N$.
\end{enumerate}
Mirror arguments can be used to show that we have (III)-(IV), (V)-(VI) and (VII)-(VIII).

\subsection{Results}
To calculate the chaotic area we proceed as follows.
We subdivide the $(a,b)$ parameter range $[3,10]\times[0.1,0.8]$ into $700\times 100$ initial boxes. On each initial box we attempt a validation, and include that box towards the chaotic are if it succeeds. If a validation fails, we subdivide the box into $5\times 5$ smaller boxes, and on each we attempt the validation again. The subdivision process is repeated three times, so the (theoretical) number of smallest boxes inside each initial box could be $25^3$. In practice our validation succeeds well before the boxes are subdivided into such small fragments. In some parameter regions though, even on the smallest parameter boxes the validation can fail, in wich case the box is not counted towards the chaotic area. 

In more detail, for each parameter box we proceed as follows, repeating the procedure three times:
\begin{enumerate}
\item[(step 1)] Starting with $\kappa=0$ and finishing with the largest integer $\kappa$ which is smaller than $a(1-b)^{-1}$, we apply the procedure from section \ref{sec:validation-dsf} to validate: (I)-(II), (III)-(IV), (V)-(VI) and (VII)-(VIII). If for some $\kappa$ we succeed, then we can stop and include the parameter box as the validated chaotic area.
\item[(step 2)] If for each possible $\kappa$ we are not able to validate (I)-(VIII), then we subdivide the box into $25$ equal sub-boxes, and for each of them go back to (step 1). 
\end{enumerate}
Using the above procedure we have validated that the area of the parameter domain $[3,10]\times[0.1,0.8]$ for which we have chaos exceeds $98.47\%$ of the domain, proving Theorem \ref{thm:cap-dsf}.

The computer-assisted proof has been conducted on a cluster, running on 48 parallel threads under two hours. (The computation would have taken $2\cdot 48$ hours on a single thread.) The code used for the proof is available in \cite{code}.

\bibliographystyle{plain}
\bibliography{bibliografia2}

\end{document}